
\documentclass[12pt,leqno]{article}
\usepackage{amsfonts,amsthm,amsmath}

\usepackage[mathscr]{eucal}
\usepackage{amsmath}
\usepackage{amsfonts}
\usepackage{amssymb}
\usepackage{footnpag}
\usepackage[dvips]{graphicx}

\usepackage[mathlines]{lineno}

\theoremstyle{plain}
\newtheorem{thm}{Theorem}[section]
\newtheorem{prop}{Proposition}[section]
\newtheorem{lem}{Lemma}[section]
\newtheorem{cor}{Corollary}[section]
\newtheorem{conj}{Conjecture}[section]

\theoremstyle{definition}
\newtheorem{df}{Definition}[section]
\newtheorem{rem}{Remark}[section]
\newtheorem{ex}{Example}[section]

%%%%%%%%%%%%
\newcommand{\Q}{\mathbb{Q}}
\newcommand{\Z}{\mathbb{Z}}

\newcommand{\C}{\mathbb{C}}
\newcommand{\R}{\mathbb{R}}

\newcommand{\OO}{\mathcal{O}}
\newcommand{\oo}{\mathfrak{o}}

\newcommand{\QQ}{\mathbb{Q}}

\newcommand{\ZZ}{\mathbb{Z}}
\newcommand{\RR}{\mathbb{R}}

\newcommand{\NN}{\mathbb{N}}

\newcommand{\MC}{\multicolumn}

\newcommand{\Cl}{\mathop{\mathrm{Cl}}\nolimits}
%%%%%%%%%%%

\newcommand{\e}{\hfill $\Box$}

%%%%%%%%%%%%   TEXT START   %%%%%%%%%%%%%
\begin{document}
%\pagewiselinenumbers 

\title{Toy models for D. H. Lehmer's conjecture II}
\author{Eiichi Bannai\thanks{Faculty of Mathematics, 
Kyushu University, 
Motooka 744 Nishi-ku, 
Fukuoka, 819-0395 Japan, 
bannai@math.kyushu-u.ac.jp}
and 
Tsuyoshi Miezaki\thanks{
%Research Fellow of the Japan Society for the Promotion 
%of Science and 
Division of Mathematics, 
Graduate School of Information Sciences, 
Tohoku University, 
6-3-09 Aramaki-Aza-Aoba, Aoba-ku, Sendai 980-8579, Japan, 
miezaki@math.is.tohoku.ac.jp}
}
\date{}

\maketitle

%\author{Eiichi Bannai and Tsuyoshi Miezaki}

%\maketitle \vspace{-0.2in}
%\begin{center}
%Graduate School of Mathematics Kyushu University\\
%Hakozaki 6-10-1 Higashi-ku, Fukuoka, 812-8581 Japan\\ \quad
%\end{center} \vspace{0.1in}

\begin{quote}
{\small\bfseries Abstract.}

In the previous paper, we studied the ``Toy models 
for D. H. Lehmer's conjecture". Namely, we showed 
that the 
$m$-th Fourier coefficient of the weighted theta series 
of the $\mathbb{Z}^2$-lattice and the $A_{2}$-lattice 
does not vanish, when the shell of norm $m$ of those lattices 
is not the empty set. In other words, the spherical $4$ (resp. $6$)-design 
does not exist among the nonempty shells 
in the $\mathbb{Z}^2$-lattice (resp. $A_{2}$-lattice).

This paper is the sequel to the previous paper. We take $2$-dimensional 
lattices associated to the 
algebraic integers of 
imaginary quadratic fields whose class number is either $1$ or $2$, 
except for $\QQ(\sqrt{-1})$ and $\QQ(\sqrt{-3})$, 
then, show that 
the $m$-th Fourier coefficient of the weighted theta series 
of those lattices does not vanish, 
when the shell of norm $m$ of those lattices is not the empty set. 
Equivalently, we show that the corresponding spherical $2$-design 
does not exist among the nonempty shells in those lattices. 

\noindent
{\small\bfseries Key Words and Phrases.}
weighted theta series, spherical $t$-design, modular forms, lattices, Hecke operator.\\ \vspace{-0.15in}

\noindent
2000 {\it Mathematics Subject Classification}. Primary 11F03; Secondary 05B30; 
Tertiary 11R04.\\ \quad
\end{quote}

\section{Introduction}                                   
The concept of spherical $t$-design is due to Delsarte-Goethals-Seidel \cite{DGS}. For a positive integer $t$, a finite nonempty subset $X$ of the unit sphere
\[
S^{n-1} = \{x = (x_1, x_2, \cdots , x_n) \in \R ^{n}\ |\ x_1^{2}+ x_2^2+ \cdots + x_n^{2} = 1\}
\]
is called a spherical $t$-design on $S^{n-1}$ if the following condition is satisfied:
\[
\frac{1}{|X|}\sum_{x\in X}f(x)=\frac{1}{|S^{n-1}|}\int_{S^{n-1}}f(x)d\sigma (x), 
\]
for all polynomials $f(x) = f(x_1, x_2, \cdots ,x_n)$ of degree not exceeding $t$. Here, the righthand side means the surface integral on the sphere, and $|S^{n-1}|$ denotes the surface volume of the sphere $S^{n-1}$. The meaning of spherical $t$-design is that the average value of the integral of any polynomial of degree up to $t$ on the sphere is replaced by the average value at a finite set on the sphere. A finite subset $X$ in $S^{n-1}(r)$, the sphere of radius $r$ 
centered at the origin, 
is also called a spherical $t$-design if $\frac{1}{r}X$ is 
a spherical $t$-design on the unit sphere $S^{n-1}$.

We denote by ${\rm {\rm Harm}}_{j}(\R^{n})$ the set of homogeneous harmonic polynomials of degree $j$ on $\R^{n}$. It is well known that $X$ is a spherical $t$-design if and only if the condition 
\begin{eqnarray*}
\sum_{x\in X}P(x)=0 
\end{eqnarray*}
holds for all $P\in {\rm Harm}_{j}(\R^{n})$ with $1\leq j\leq t$. If the set $X$ is antipodal, that is $-X=X$, and $j$ is odd, then the above condition is fulfilled automatically. So we reformulate the condition of spherical $t$-design on the antipodal set as follows:
\begin{prop}
A nonempty finite antipodal subset $X\subset S^{n-1}$ is a spherical $2s+1$-design if the condition 
\begin{eqnarray*}
\sum_{x\in X}P(x)=0 
\end{eqnarray*}
holds for all $P\in {\rm Harm}_{2j}(\R^{n})$ with $2\leq 2j\leq 2s$. 
\end{prop}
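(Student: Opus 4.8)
The plan is to reduce the claim to the harmonic characterization of spherical designs recalled just above the statement, and then to dispose of each degree $j$ in the range $1 \le j \le 2s+1$ according to the parity of $j$. First I would invoke that characterization: $X$ is a spherical $(2s+1)$-design precisely when $\sum_{x\in X} P(x) = 0$ for every $P \in {\rm Harm}_{j}(\R^{n})$ with $1 \le j \le 2s+1$. So it suffices to verify this vanishing condition for each such $j$, and I would organize the verification by splitting the degrees into the odd and the even cases.

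For the odd degrees $j \in \{1, 3, \dots, 2s+1\}$, I would use antipodality. Any $P \in {\rm Harm}_{j}(\R^{n})$ is homogeneous of degree $j$, hence $P(-x) = (-1)^{j}P(x) = -P(x)$ when $j$ is odd. Since $X = -X$, the terms of $\sum_{x\in X} P(x)$ cancel in antipodal pairs and the sum vanishes automatically, exactly as noted in the text preceding the statement. This already accounts for the top degree $2s+1$, which is odd, so the hypothesis is never needed there.

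For the even degrees, the values of $j$ with $1 \le j \le 2s+1$ and $j$ even are exactly $j \in \{2, 4, \dots, 2s\}$, since $2s+1$ is odd and so no even degree in the range exceeds $2s$. These are precisely the degrees $2j'$ satisfying $2 \le 2j' \le 2s$, which are covered by the hypothesis; it therefore gives $\sum_{x\in X} P(x) = 0$ for all such $P$ directly. Combining the two cases, the vanishing condition holds for every $j$ with $1 \le j \le 2s+1$, and the harmonic characterization then yields that $X$ is a spherical $(2s+1)$-design.

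I do not expect a genuine obstacle here, as the content is entirely the parity bookkeeping. The one point deserving care is the matching of ranges: one must confirm that the largest even degree that needs checking, namely $2s$, coincides with the upper endpoint of the hypothesis, and that the remaining degree $2s+1$ is handled solely by antipodality rather than by any vanishing assumption. Once these endpoints are lined up, the proof closes immediately.
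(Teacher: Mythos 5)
Your proof is correct and is essentially the argument the paper itself relies on: the text immediately preceding the proposition states the harmonic characterization of spherical $t$-designs and observes that antipodality kills all odd-degree harmonic sums, so the hypothesis only needs to cover the even degrees $2,4,\dots,2s$. Your careful check that the top degree $2s+1$ is odd (hence handled by antipodality) and that no even degree beyond $2s$ occurs is exactly the parity bookkeeping implicit in the paper's reformulation.
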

It is known \cite{DGS} that there is a natural lower bound (Fisher type inequality) 
for the size of a spherical $t$-design in $S^{n-1}$. 
Namely, if $X$ is a spherical $t$-design in $S^{n-1}$, then 
\begin{eqnarray*}
\vert X\vert\geq \binom{n-1+[t/2]}{[t/2]}+\binom{n+[t/2]-2}{[t/2]-1} 
\end{eqnarray*}
if $t$ is even, and 
\begin{eqnarray}
\vert X\vert\geq 2\binom{n-1+[t/2]}{[t/2]} \label{ine:Fisher} 
\end{eqnarray}
if $t$ is odd. 

A lattice in $\R^{n}$ is a subset $\Lambda \subset \R^{n}$ with the property that there exists a basis $\{v_{1}, \cdots, v_{n}\}$ of $\R^{n}$ such that $\Lambda =\Z v_{1}\oplus \cdots \oplus\Z v_{n}$, i.e., $\Lambda $ consists of all integral linear combinations of the vectors $v_{1}, \cdots, v_{n}$. 
The dual lattice $\Lambda$ is the lattice
\begin{eqnarray*}
\Lambda^{\sharp}:=\{y\in \R^{n}\ |\ (y, x) \in\Z , \ \mbox{for all } x\in \Lambda\}, 
\end{eqnarray*}
where $(x, y)$ is the standard Euclidean inner product. 
The lattice $\Lambda $ is called integral if $(x, y) \in\Z$ for all $x$, $y\in \Lambda$. An integral lattice is called even if $(x, x) \in 2\Z$ for all $x\in \Lambda$, and it is odd otherwise. An integral lattice is called unimodular if $\Lambda^{\sharp}=\Lambda$. 
For a lattice $\Lambda$ and a positive real number $m>0$, the shell of norm $m$ of $\Lambda$ is defined by 
\[
\Lambda_{m}:=\{x\in \Lambda\ |\ (x, x)=m \}=\Lambda\cap S^{n-1}(m).
\]

Let $\mathbb{H} :=\{z\in\C\ |\ {\rm Im}\,(z) >0\}$ be the upper half-plane. 
\begin{df}
Let $\Lambda$ be the lattice of $\R^{n}$. Then for a polynomial $P$, the function 
\begin{eqnarray*}
\Theta _{\Lambda, P} (z):=\sum_{x\in \Lambda}P(x)e^{i\pi z(x, x)}
\end{eqnarray*}
is called the theta series of $\Lambda $ weighted by $P$. 
\end{df}
\begin{rem}[See Hecke \cite{Hecke}, Schoeneberg \cite{{Sch1},{Sch2}}]
$\\ $
{\rm (i)}
When $P=1$, we get the classical theta series 
\begin{eqnarray*}
\Theta _{\Lambda} (z)=\Theta _{\Lambda, 1} (z)=\sum_{m\ge 0}|\Lambda_{m}|q^{m},\ {\rm where}\ q=e^{\pi i z}. 
\end{eqnarray*}
{\rm (ii)}
The weighted theta series can be written as 
\begin{eqnarray*}
\Theta _{\Lambda, P} (z)&=&\sum_{x\in \Lambda}P(x)e^{i\pi z(x, x)} \\
&=&\sum_{m\geq 0}a^{(P)}_{m}q^{m},\ {\rm where}\ a^{(P)}_{m}:=\sum_{x\in \Lambda_{m}}P(x). 
\end{eqnarray*}
\end{rem}

These weighted theta series have been used efficiently for the study of spherical designs which are the nonempty shells of Euclidean lattices. (See \cite{{Venkov1},
{Venkov2},{HP},{Pache},{HPV}}. See also \cite{BKST}.) 

\begin{lem}[cf.~\cite{{Venkov1},{Venkov2}},~{\cite[Lemma 5]{Pache}}]\label{lem:lempache}
Let $\Lambda$ be an integral lattice in $\R^{n}$. Then, for $m>0$, the non-empty shell $\Lambda_{m}$ is a spherical $t$-design if and only if 
\begin{eqnarray*}
a^{(P)}_{m}=0
\end{eqnarray*}
for all $P\in {\rm Harm}_{2j}(\R^{n})$ with $1\leq 2j\leq t$, where $a^{(P)}_{m}$ are the Fourier coefficients of the weighted theta series 
\begin{eqnarray*}
\Theta _{\Lambda , P}(z)=\sum_{m\geq 0}a^{(P)}_{m}q^{m}. 
\end{eqnarray*}
\end{lem}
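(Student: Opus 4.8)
The plan is to deduce this directly from the harmonic-polynomial characterization of spherical designs recalled just before Proposition 1.1, so that essentially nothing beyond careful bookkeeping is needed. Recall that a finite nonempty $X\subset S^{n-1}$ is a spherical $t$-design if and only if $\sum_{x\in X}P(x)=0$ for every $P\in {\rm Harm}_{j}(\R^{n})$ with $1\leq j\leq t$. The shell $\Lambda_{m}$ consists of vectors with $(x,x)=m$, so it lies on the sphere of radius $\sqrt{m}$, and by the definition in the introduction it is a spherical $t$-design precisely when the rescaled set $\frac{1}{\sqrt{m}}\Lambda_{m}$ is one on the unit sphere $S^{n-1}$.

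First I would dispose of the rescaling using homogeneity. Since every $P\in {\rm Harm}_{j}(\R^{n})$ is homogeneous of degree $j$, we have $P(\frac{1}{\sqrt{m}}x)=m^{-j/2}P(x)$, whence $\sum_{x\in \Lambda_{m}}P(\frac{1}{\sqrt{m}}x)=m^{-j/2}\sum_{x\in \Lambda_{m}}P(x)$. As $m>0$, the left-hand side vanishes if and only if $\sum_{x\in \Lambda_{m}}P(x)=0$, so the normalization plays no role and $\Lambda_{m}$ is a spherical $t$-design if and only if $\sum_{x\in \Lambda_{m}}P(x)=0$ for all $P\in {\rm Harm}_{j}(\R^{n})$ with $1\leq j\leq t$.

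Next I would use antipodality. Because $\Lambda$ is a lattice, $-\Lambda_{m}=\Lambda_{m}$, so $\Lambda_{m}$ is antipodal. For odd $j$ one has $P(-x)=-P(x)$ for $P\in {\rm Harm}_{j}(\R^{n})$, and pairing $x$ with $-x$ forces $\sum_{x\in \Lambda_{m}}P(x)=0$ automatically; this is exactly the reduction underlying Proposition 1.1. Hence only even degrees impose a condition, and the design criterion becomes $\sum_{x\in \Lambda_{m}}P(x)=0$ for all $P\in {\rm Harm}_{2j}(\R^{n})$ with $1\leq 2j\leq t$. Finally, by the definition of the Fourier coefficients of the weighted theta series we have $\sum_{x\in \Lambda_{m}}P(x)=a^{(P)}_{m}$, which gives the asserted equivalence.

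The argument has no genuine obstacle; the only point demanding care is the distinction between the sphere of radius $\sqrt{m}$ on which the shell actually sits and the unit sphere appearing in the design condition, which is harmless by homogeneity. I would also remark that integrality of $\Lambda$ is not needed for the equivalence itself, which holds for any lattice shell, but only to guarantee that the exponents $m=(x,x)$ are the nonnegative integers indexing the coefficients $a^{(P)}_{m}$ of $q^{m}$ in the $q$-expansion, as used by the modular-form machinery elsewhere in the paper.
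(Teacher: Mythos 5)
Your proof is correct. The paper itself states this lemma without proof, citing Venkov and Pache (Lemma 5), and your argument --- the harmonic-polynomial characterization of designs, homogeneity of harmonic polynomials to dispose of the radius normalization, and antipodality of lattice shells to discard odd degrees --- is exactly the standard argument underlying those references, including your accurate side remark that integrality of $\Lambda$ is only needed so that the norms $(x,x)$ are integers indexing the $q$-expansion.
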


The theta series of $\Lambda $ weighted by $P$ is a modular form for some subgroup of $SL_{2}(\R)$. We recall the definition of the modular forms. 

\begin{df}
Let $\Gamma \subset SL_{2}(\R)$ be a Fuchsian group of the first kind and let $\chi$ be a character of $\Gamma$. A holomorphic function $f:\mathbb{H}\rightarrow \C$ is called a modular form of weight $k$ for $\Gamma$ with respect to $\chi$, if the following conditions are satisfied{\rm :} 
\begin{eqnarray*}
&{\rm (i)}& \quad f\Big(\frac{az+b}{cz+d}\Big)
=\Big(\frac{cz+d}{\chi (\sigma)}\Big)^{k}f(z)\ {\rm for}\ 
{\rm all}\ \sigma =
\begin{pmatrix}
a & b\\
c & d
\end{pmatrix}
 \in \Gamma .\\
&{\rm (ii)}& \quad f(z)\ {\rm is\ holomorphic\ at\ every\ cusp\ of}\ \Gamma. 
\end{eqnarray*}
\end{df}
If $f(z)$ has period $N$, then $f(z)$ has a Fourier expansion at infinity, \cite{Kob}:
\begin{eqnarray*}
f(z)=\sum_{m=0}^{\infty}a_{m} q_{N}^{m},\ q_{N}=e^{2 \pi i z /N}. 
\end{eqnarray*}
We remark that for $m<0$, $a_{m}=0$, by the condition (ii). A modular form with constant term $a_{0}=0$, is called a cusp form. 
We denote by $M_{k}(\Gamma , \chi)$ (resp. $S_{k}(\Gamma , \chi)$) the space of modular forms (resp. cusp forms) with respect to $\Gamma$ with the character $\chi$. 
When $f$ is the normalized eigenform of Hecke operators, p.163, \cite{Kob}, the Fourier coefficients satisfy the following relations:
\begin{lem}[cf.~{\cite[Proposition 32, 37, 40, Exercise 2, p.164]{Kob}}]\label{lem:lemrama}
Let $f(z)=\sum_{m\geq1}a(m)q^{m} \in S_{k}(\Gamma , \chi)$. 
If $f(z)$ is the normalized eigenform of Hecke operators, then the Fourier coefficients of $f(z)$ satisfy the following relations{\rm :}
\begin{eqnarray}
a(mn)&=&a(m)a(n)\ if\ (m, n)=1 \label{eqn:mul}\\ 
a(p^{\alpha +1})&=&a(p)a(p^{\alpha })-\chi(p) p^{k-1}a(p^{\alpha -1})\ 
if\ p\ is\ a\ prime. \label{eqn:rec}
\end{eqnarray}
\end{lem}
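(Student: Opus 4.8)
The plan is to deduce both relations from the multiplicative structure of the Hecke operators acting on $S_{k}(\Gamma, \chi)$, combined with the defining property of a normalized eigenform; this is essentially the content assembled from the cited propositions of \cite{Kob}. First I would recall the explicit action of the Hecke operator $T_{n}$ on a form $g(z)=\sum_{m\geq 0}c(m)q^{m}\in S_{k}(\Gamma,\chi)$: writing $T_{n}g=\sum_{m\geq 0}c'(m)q^{m}$, one has
\[
c'(m)=\sum_{d\mid \gcd(m,n)}\chi(d)\,d^{k-1}\,c\!\left(\frac{mn}{d^{2}}\right).
\]
In particular the first Fourier coefficient of $T_{n}g$ is $c'(1)=c(n)$. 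This formula is the computational heart of the argument.

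Second, I would record the two structural identities satisfied by the operators themselves, namely $T_{m}T_{n}=T_{mn}$ when $(m,n)=1$, and $T_{p^{\alpha+1}}=T_{p}T_{p^{\alpha}}-\chi(p)\,p^{k-1}\,T_{p^{\alpha-1}}$ for a prime $p$ and $\alpha\geq 1$. These may be quoted directly from \cite{Kob}, or verified by substituting the coefficient formula above into both sides and checking that the resulting divisor sums agree; the verification reduces to elementary bookkeeping of the divisors of $mn$ and, in the prime-power case, to sorting contributions according to the exact power of $p$ dividing $m$.

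Finally I would invoke the hypothesis that $f(z)=\sum_{m\geq 1}a(m)q^{m}$ is a normalized eigenform, so that $a(1)=1$ and $T_{n}f=\lambda_{n}f$ for scalars $\lambda_{n}$. Comparing first Fourier coefficients in $T_{n}f=\lambda_{n}f$ and using $c'(1)=c(n)$ from the first step gives $a(n)=\lambda_{n}a(1)=\lambda_{n}$, so the eigenvalue of $T_{n}$ on $f$ is precisely $a(n)$. Applying $T_{m}T_{n}=T_{mn}$ to $f$ then yields $a(mn)f=T_{mn}f=T_{m}T_{n}f=a(m)a(n)f$, and comparing coefficients gives \eqref{eqn:mul}; applying the prime-power recursion to $f$ gives $a(p^{\alpha+1})f=\bigl(T_{p}T_{p^{\alpha}}-\chi(p)p^{k-1}T_{p^{\alpha-1}}\bigr)f=\bigl(a(p)a(p^{\alpha})-\chi(p)p^{k-1}a(p^{\alpha-1})\bigr)f$, which yields \eqref{eqn:rec}. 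The main obstacle is the second step: establishing the operator identities honestly requires either the double-coset description of the Hecke algebra or a careful analysis of the divisor sums, and it is there that the commutativity of the operators and the precise exponent $k-1$ together with the character twist $\chi(p)$ must be pinned down. Once those identities are in hand, everything else is formal.
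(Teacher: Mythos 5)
Your proposal is correct and is essentially the same argument as the one behind the paper's statement: the paper offers no proof of this lemma but simply quotes it from the cited reference \cite{Kob}, and the proof given there is exactly your route (the coefficient formula for $T_{n}$ giving $c'(1)=c(n)$, the operator identities $T_{m}T_{n}=T_{mn}$ for $(m,n)=1$ and $T_{p^{\alpha+1}}=T_{p}T_{p^{\alpha}}-\chi(p)p^{k-1}T_{p^{\alpha-1}}$, and the identification of eigenvalues with coefficients via $a(1)=1$). Nothing is missing: since $f\neq 0$, comparing first Fourier coefficients legitimately converts the operator identities into the stated relations among the $a(m)$.
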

We set $f(z)=\sum_{m\geq1}a(m)q^{m} \in S_{k}(\Gamma , \chi)$. 
When $\dim S_{k}(\Gamma , \chi)=1$ and $a(1)=1$, then $f(z)$ is the normalized eigenform of Hecke operators, \cite{Kob}. So, the coefficients of $f(z)$ have the relations as mentioned in Lemma \ref{lem:lemrama}. 
It is known that 
\begin{eqnarray}
|a(p)|<2p^{(k-1)/2} \label{eqn:Deligne}
\end{eqnarray}
for all primes $p$. 
Note that this is the Ramanujan conjecture and its generalization, called the Ramanujan-Petersson conjecture for cusp forms which are eigenforms of the 
Hecke operators. These conjectures were proved by Deligne as a consequence of his proof of the Weil conjectures, \cite[page 164]{Kob}, \cite{Katz}. 
Moreover, for a prime $p$ with $\chi (p)=1$ the following equation holds, \cite{Lehmer}. 
\begin{eqnarray}
a(p^{\alpha })=p^{(k-1)\alpha /2}\frac{\sin (\alpha +1)\theta _{p}}{\sin \theta _{p}}, \label{eqn:Lehmer} 
\end{eqnarray}
where $2 \cos \theta _{p} = a(p)p^{-(k-1)/2}$.

It is well known that the theta series of $\Lambda \subset \R^{n}$ weighted by harmonic polynomial $P\in {\rm Harm}_{j}(\R^{n})$ is a modular form of weight $n/2+j$ for some subgroup $\Gamma \subset SL_{2}(\R)$. In particular, when $\deg (P)\geq 1$, the theta series of $\Lambda$ weighted by $P$ is a cusp form. 

For example, we consider the even unimodular lattice $\Lambda$. Then the theta series of $\Lambda $ weighted by harmonic polynomial $P$, $\Theta_{\Lambda, P}(z)$, is a modular form with respect to $SL_{2}(\Z)$. 

\begin{ex}
Let $\Lambda $ be the $E_{8}$-lattice. This is an even unimodular lattice of $\R^{8}$, generated by the $E_{8}$ root system. The theta series is as follows{\rm :}
\begin{eqnarray*}
\Theta_{\Lambda}(z)=E_{4}(z)&=&1+240\sum_{m=1}^{\infty}\sigma _{3}(m) q^{2m} \\
&=&1 + 240 q^2 + 2160 q^4 + 6720 q^6 + 17520 q^8 +\cdots, 
\end{eqnarray*}
where $\sigma _{3}(m)$ is a divisor function $\sigma _{3}(m)=\sum_{0<d|m}d^3$. 

For $j=2, 4$ and $6$, the theta series of $\Lambda $ weighted by $P\in {\rm Harm}_{j}(\R^{8})$ is a weight $6, 8$ and $10$ cusp form with respect to $SL_{2}(\Z)$. However, it is well known that for $k=6, 8$ and $10$, $\dim S_{k}(SL_{2}(\Z))=0$, that is, $\Theta_{\Lambda , P}(z)=0$. Then by Lemma \ref{lem:lempache}, 
all the nonempty shells of $E_{8}$-lattice are spherical $6$-design. 
\end{ex}

For $j=8$, the theta series of $\Lambda $ weighted by $P$ is a weight $12$ cusp form with respect to $SL_{2}(\Z)$. Such a cusp form is uniquely determined up to constant, i.e., it is Ramanujan's delta function:
\begin{eqnarray*}
\Delta _{24}(z)=q^2\prod_{m\geq 1}(1-q^{2m})^{24}=\sum_{m\geq 1}\tau (m)q^{2m}. 
\end{eqnarray*}

The following proposition is due to Venkov, de la Harpe and Pache 
\cite{{HP},{HPV},{Pache},{Venkov1}}. 
\begin{prop}[cf.~\cite{Pache}]\label{prop:proppache}
Let the notation be the same as above. 
Then the following are equivalent{\rm :}
\begin{itemize}
\item 
[\rm{(i)}]

$\tau (m)=0$. 

\item 
[\rm{(ii)}]
$(\Lambda )_{2m}$ is an $8$-design.
\end{itemize}
\end{prop}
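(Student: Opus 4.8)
The plan is to reduce the $8$-design condition to a single harmonic degree using Lemma~\ref{lem:lempache}, and then exploit the fact that $S_{12}(SL_2(\Z))$ is one-dimensional. First I would observe that, since $\Lambda=E_8$ is even, its nonempty shells occur exactly at the norms $2m$. By Lemma~\ref{lem:lempache}, $(\Lambda)_{2m}$ is a spherical $8$-design if and only if $a^{(P)}_{2m}=0$ for every $P\in\mathrm{Harm}_{2j}(\R^8)$ with $2j\in\{2,4,6,8\}$. For the degrees $2,4,6$ the Example already shows that $\Theta_{\Lambda,P}$ is a cusp form of weight $6,8,10$ for $SL_2(\Z)$, and since $\dim S_k(SL_2(\Z))=0$ for $k=6,8,10$ these series vanish identically; hence the conditions in degrees $2,4,6$ hold automatically for \emph{every} $m$. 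Consequently the $8$-design property is equivalent to the single requirement that $a^{(P)}_{2m}=0$ for all $P\in\mathrm{Harm}_8(\R^8)$.

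Next I would use that for $P\in\mathrm{Harm}_8(\R^8)$ the weighted theta series $\Theta_{\Lambda,P}(z)$ is a cusp form of weight $n/2+j=4+8=12$ for $SL_2(\Z)$. As $S_{12}(SL_2(\Z))$ is one-dimensional and spanned by $\Delta_{24}(z)=\sum_{m\ge1}\tau(m)q^{2m}$, there is a scalar $c_P\in\C$ with $\Theta_{\Lambda,P}=c_P\,\Delta_{24}$; comparing the coefficients of $q^{2m}$ gives the key identity $a^{(P)}_{2m}=c_P\,\tau(m)$. This drives both implications at once. For (i)$\Rightarrow$(ii): if $\tau(m)=0$ then $a^{(P)}_{2m}=c_P\tau(m)=0$ for every $P\in\mathrm{Harm}_8$, so by the reduction above $(\Lambda)_{2m}$ is an $8$-design. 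For (ii)$\Rightarrow$(i): if $(\Lambda)_{2m}$ is an $8$-design then $c_P\tau(m)=0$ for all $P$, and choosing any $P$ with $c_P\neq0$ forces $\tau(m)=0$.

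The crux is therefore to exhibit a single $P\in\mathrm{Harm}_8(\R^8)$ with $c_P\neq0$, that is, to show that the linear theta map $P\mapsto\Theta_{\Lambda,P}$ from $\mathrm{Harm}_8(\R^8)$ to $S_{12}(SL_2(\Z))$ is not the zero map; I expect this to be the main obstacle, since a priori all degree-$8$ weighted theta series could conceivably vanish simultaneously. I would settle it by a computation on the minimal shell: evaluate $a^{(P)}_{2}=\sum_{x\in(\Lambda)_2}P(x)$ for a concrete harmonic polynomial of degree $8$ (for example a suitable zonal harmonic relative to a root direction) over the $240$ minimal vectors of $E_8$, and check that the value is nonzero. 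Because $\tau(1)=1$, the identity $a^{(P)}_2=c_P\,\tau(1)=c_P$ then yields $c_P\neq0$ and completes the proof; equivalently, this records the classical fact that the root shell $(\Lambda)_2$ is a spherical $7$-design but not an $8$-design. Alternatively, one may invoke the known surjectivity of the theta map onto the space of cusp forms attached to $E_8$, which produces a $P$ with $c_P\neq0$ directly.
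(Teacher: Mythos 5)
Your proof is correct and follows essentially the same route that the paper sets up (and attributes to Venkov--de la Harpe--Pache): Lemma \ref{lem:lempache} together with $\dim S_k(SL_2(\Z))=0$ for $k=6,8,10$ reduces the $8$-design condition to degree $8$, where $S_{12}(SL_2(\Z))=\C\,\Delta_{24}$ yields $a^{(P)}_{2m}=c_P\,\tau(m)$. For the one step the paper leaves to its citations --- exhibiting $P\in\mathrm{Harm}_8(\R^8)$ with $c_P\neq 0$ --- your explicit computation over the $240$ roots would work, but it is even quicker to use the paper's own Fisher-type bound for even $t$: an $8$-design on $S^7$ must have at least $\binom{11}{4}+\binom{10}{3}=450>240$ points, so $(\Lambda)_2$ is not an $8$-design, and hence some degree-$8$ coefficient $a^{(P)}_{2}=c_P\tau(1)=c_P$ is nonzero.
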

It is a famous conjecture of Lehmer that $\tau (m) \neq 0$. So, Proposition \ref{prop:proppache} gives a reformulation of Lehmer's conjecture. 
Lehmer proved in \cite{Lehmer} the following theorem. 

\begin{thm}[cf.~\cite{Lehmer}]\label{thm:Leh}
Let $m_{0}$ be the least value of $m$ for which $\tau (m)=0$. 
Then $m_{0}$ is a prime if it is finite. 
\end{thm}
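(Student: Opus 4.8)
The plan is to combine minimality of $m_0$ with the multiplicativity of $\tau$ to reduce to the prime-power case, and then to eliminate higher prime powers by an integrality argument built on the Hecke recurrence together with Deligne's bound. Throughout, recall that here $k=12$, so $p^{k-1}=p^{11}$, and that $\tau(1)=1$, whence $m_0\ge 2$; by the definition of $m_0$ we have $\tau(m)\ne 0$ for every $1\le m<m_0$ while $\tau(m_0)=0$. The first step is to show $m_0$ is a prime power: if $m_0$ is not a prime power, then it factors as $m_0=ab$ with $a,b>1$ and $(a,b)=1$, so by the multiplicativity relation $(\ref{eqn:mul})$ we get $0=\tau(m_0)=\tau(a)\tau(b)$, forcing $\tau(a)=0$ or $\tau(b)=0$ with $a,b<m_0$, contradicting minimality. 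Hence $m_0=p^{\alpha}$ for some prime $p$ and some $\alpha\ge 1$, and it remains to rule out $\alpha\ge 2$.

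So assume $\alpha\ge 2$. Setting $y:=\tau(p)/p^{11/2}$ and $b_\beta:=\tau(p^{\beta})/p^{11\beta/2}$, the recurrence $(\ref{eqn:rec})$ (with the trivial character, so $\chi(p)=1$) becomes $b_{\beta+1}=y\,b_{\beta}-b_{\beta-1}$ with $b_0=1$ and $b_1=y$. Thus $b_\beta=S_\beta(y)$, where the $S_\beta$ are the monic integer-coefficient polynomials defined by this Chebyshev-type recursion; equivalently, one may read the same structure off the closed form $(\ref{eqn:Lehmer})$, whose roots are $2\cos\bigl(j\pi/(\alpha+1)\bigr)$. Since $\tau(p^{\alpha})=0$, we obtain $S_\alpha(y)=0$, so $y=\tau(p)/p^{11/2}$ is a root of a monic polynomial with integer coefficients, i.e.\ an algebraic integer. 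Consequently $y^2=\tau(p)^2/p^{11}$ is simultaneously rational and an algebraic integer, hence an ordinary integer; that is, $p^{11}\mid \tau(p)^2$.

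The last step extracts a contradiction from $p^{11}\mid\tau(p)^2$. For $p\ge 5$: if $\tau(p)\ne 0$, then $2\,v_p(\tau(p))=v_p(\tau(p)^2)\ge 11$ forces $v_p(\tau(p))\ge 6$, so $|\tau(p)|\ge p^{6}$; but $p^{6}>2p^{11/2}$ whenever $p>4$, contradicting the Ramanujan--Deligne bound $(\ref{eqn:Deligne})$, while $\tau(p)=0$ is impossible because $p<m_0$. For the two small primes the divisibility already fails outright: $\tau(2)=-24$ gives $v_2(\tau(2)^2)=6<11$, and $\tau(3)=252$ gives $v_3(\tau(3)^2)=4<11$. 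In every case $\alpha\ge 2$ is untenable, so $m_0$ must equal $p$, a prime.

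The crux of the argument---and the step I expect to need the most care---is the integrality observation that $\tau(p)/p^{11/2}$ is an algebraic integer, which converts the vanishing $\tau(p^{\alpha})=0$ into the clean divisibility $p^{11}\mid\tau(p)^2$. Everything afterward is a parity check: the exponent $11$ is odd while $v_p(\tau(p)^2)$ is even, and Deligne's bound $(\ref{eqn:Deligne})$ rules out the one way this parity obstruction could be evaded for $p\ge 5$, with $p\in\{2,3\}$ dispatched by direct inspection of $\tau(2)$ and $\tau(3)$.
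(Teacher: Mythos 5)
Your proof is correct, and it is in essence the Lehmer-style argument that this paper relies on. One point of context: the paper itself gives no proof of Theorem \ref{thm:Leh} (it is quoted from \cite{Lehmer}); the closest thing to "the paper's own proof" is the proof of Proposition \ref{prop:Lehmer}, which the authors present explicitly as the weight-$3$ analogue of this theorem. Your argument follows exactly that mechanism: vanishing of the eigenform coefficient at a prime power forces $2\cos\theta_p$ to be an algebraic integer, and integrality combined with the Deligne bound (\ref{eqn:Deligne}) produces a contradiction. What you add, correctly, are the pieces the full weight-$12$ statement needs but Proposition \ref{prop:Lehmer} does not: the reduction to prime powers via multiplicativity (\ref{eqn:mul}) (the Proposition is stated only for prime powers), and the squaring step. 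In the paper's weight-$3$ setting $z=a(p)/p$ is already rational, so rational integrality applies to $z$ itself and the endgame is immediate ($z=\pm1$, i.e.\ $a(p)=\pm p$); for $\tau$, since $p^{11/2}$ is irrational, you must pass to $y^2=\tau(p)^2/p^{11}$, obtaining $p^{11}\mid\tau(p)^2$, and then finish with the valuation-parity argument plus Deligne for $p\ge 5$ and the explicit values $\tau(2)=-24$, $\tau(3)=252$ for the two small primes (where, as you correctly sense, Deligne's bound alone does not contradict $|\tau(p)|\ge p^6$). This rational-versus-irrational distinction is precisely the one the paper itself emphasizes in Section 6 as the reason its method works for class numbers $1$ and $2$; your proposal handles it cleanly, and I see no gap.
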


There are many attempts to study Lehmer's conjecture 
(\cite{{Lehmer},{Serre1}}), 
but it is difficult to prove and it is still open. 

Recently, however, we showed the ``Toy models for D. H. Lehmer's conjecture" 
\cite{Toy-BM}. 
We take the two cases $\Z^{2}$-lattice and $A_{2}$-lattice. Then, we consider the analogue of Lehmer's conjecture corresponding to the theta series weighted by some harmonic polynomial $P$. Namely, we show that the $m$-th coefficient of the weighted theta series of $\Z^{2}$-lattice does not vanish when the shell of norm $m$ of those lattices is not an empty set. Or equivalently, we show the following result.
\begin{thm}[cf.~\cite{Toy-BM}]\label{thm:toy}
The nonempty shells in $\mathbb{Z}^{2}$-lattice $($resp. $A_{2}$-lattice$)$ are not spherical $4$-designs $($resp. $6$-designs$)$. 
\end{thm}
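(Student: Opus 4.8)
The plan is to use Lemma~\ref{lem:lempache} to convert each non-design statement into the non-vanishing of a single Fourier coefficient, and then to pin that coefficient down through the Hecke-eigenform relations of Lemma~\ref{lem:lemrama}. I begin by locating the critical harmonic degree via symmetry. Writing a point of $\R^2$ as $z=x+iy$, the space $\mathrm{Harm}_{2j}(\R^2)$ is spanned by $\mathrm{Re}(z^{2j})$ and $\mathrm{Im}(z^{2j})$. The $\Z^2$-lattice is preserved by the quarter-turn $z\mapsto iz$, under which $z^{2j}\mapsto(-1)^j z^{2j}$, so the shell sums $a_m^{(P)}$ vanish identically for $P\in\mathrm{Harm}_2(\R^2)$; together with antipodality this shows every shell is already a spherical $2$-design, and the lowest degree that can obstruct the $4$-design property is $2j=4$. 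Identifying the $A_2$-lattice with the Eisenstein integers $\Z[\omega]$, $\omega=e^{2\pi i/3}$, whose units are the sixth roots of unity, the rotation $z\mapsto e^{i\pi/3}z$ kills degrees $2$ and $4$ and fixes degree $6$, so there the first obstructing degree is $2j=6$. Hence it suffices to take $P=\mathrm{Re}(z^4)$ for $\Z^2$ and $P=\mathrm{Re}(z^6)$ for $A_2$ and to prove $a_m^{(P)}\neq0$ for every $m$ with $\Lambda_m\neq\emptyset$.

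Next I would identify the two series as normalized Hecke eigenforms. Since the theta series of $\Lambda$ weighted by a harmonic polynomial of degree $d\geq1$ is a cusp form of weight $n/2+d$, the series $\Theta_{\Z^2,\,\mathrm{Re}(z^4)}$ has weight $5$ and $\Theta_{A_2,\,\mathrm{Re}(z^6)}$ has weight $7$, each for a suitable $\Gamma_0(N)$ with an odd character $\chi$ (odd as forced by the odd weight). I would compute the corresponding dimensions and verify that each space is one-dimensional; after normalizing $a(1)=1$ the form is then the normalized eigenform, so the relations \eqref{eqn:mul}--\eqref{eqn:rec} hold. Each of these forms has complex multiplication by the relevant imaginary quadratic field ($\Q(i)$, resp.\ $\Q(\sqrt{-3})$), so its coefficients are governed by the factorization of $m$ in the corresponding ring of integers.

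I would then establish $a(m)\neq0$ prime power by prime power, matching it against the classical fact that $m$ is a norm precisely when every inert prime divides $m$ to an even power. For an inert prime $p$ the CM structure gives $a(p)=0$ and $\chi(p)=-1$, so \eqref{eqn:rec} reduces to $a(p^{\alpha+1})=p^{k-1}a(p^{\alpha-1})$; with $a(1)=1$, $a(p)=0$ this yields $a(p^{2t})=p^{t(k-1)}\neq0$ and $a(p^{2t+1})=0$, exactly tracking representability. For the ramified prime ($p=2$, resp.\ $p=3$) one has $\chi(p)=0$, so $a(p^\alpha)=a(p)^\alpha$, and a direct computation of $a(p)$ shows it is nonzero; since every power of the ramified prime is a norm, this is again consistent. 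The split primes remain, where $\chi(p)=1$ and formula \eqref{eqn:Lehmer} applies.

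The main obstacle is precisely the split case: there $a(p^\alpha)=p^{(k-1)\alpha/2}\,\sin((\alpha+1)\theta_p)/\sin\theta_p$, and I must rule out $\sin((\alpha+1)\theta_p)=0$. The CM description makes this tractable: for a split prime $p=\pi\bar\pi$ in the CM field one gets $2\cos\theta_p=a(p)p^{-(k-1)/2}=2\cos((k-1)\arg\pi)$, so $\theta_p=(k-1)\arg\pi$, and a vanishing $\sin((\alpha+1)\theta_p)$ would force $\arg\pi$ to be a rational multiple of $\pi$. But $\cos(\arg\pi)$ equals an integer divided by $\sqrt{p}$, hence is irrational for the prime $p$, so by Niven's theorem $\arg\pi$ is an irrational multiple of $\pi$ and no such vanishing can occur; thus $a(p^\alpha)\neq0$ for every $\alpha\geq0$ at a split prime, matching the fact that all powers of a split prime are norms. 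Combining the three cases through the multiplicativity \eqref{eqn:mul} gives $a_m^{(P)}\neq0$ whenever $\Lambda_m\neq\emptyset$, and Lemma~\ref{lem:lempache} then yields that the nonempty shells of $\Z^2$ (resp.\ $A_2$) fail to be $4$-designs (resp.\ $6$-designs).
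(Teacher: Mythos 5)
Your overall skeleton --- reduce to a single harmonic polynomial per lattice by symmetry, identify the weighted theta series as a CM Hecke eigenform of weight $5$ (resp.\ $7$), split $m$ into prime powers via (\ref{eqn:mul}), and treat inert, ramified and split primes separately --- is exactly the architecture the paper uses for its class-number $1$ and $2$ theorems and, in \cite{Toy-BM}, for this statement (the paper disposes of inert and ramified prime powers with the Fisher bound (\ref{ine:Fisher}) rather than your Hecke-relation computation, but both are valid). However, your treatment of the split primes, which you yourself call the main obstacle, contains a genuine logical error. You argue: $\cos(\arg\pi)=a/\sqrt{p}$ is irrational, ``so by Niven's theorem $\arg\pi$ is an irrational multiple of $\pi$.'' Niven's theorem says only that if $\theta$ is a rational multiple of $\pi$ \emph{and} $\cos\theta$ is rational, then $\cos\theta\in\{0,\pm\tfrac{1}{2},\pm1\}$; it says nothing whatsoever about angles whose cosine is irrational. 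Indeed $\theta=\pi/5$ is a rational multiple of $\pi$ with irrational cosine, so irrationality of the cosine does not imply irrationality of $\theta/\pi$. As written, the split-prime case --- and hence the whole proof --- is open.

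The conclusion you want is true, but it needs a different argument, and this is precisely the work done by the paper's Proposition \ref{prop:Lehmer}. The standard route (Lehmer's, and the paper's): if $a(p^{\alpha})=0$ for some $\alpha$, then by (\ref{eqn:Lehmer}) $\theta_p$ is a rational multiple of $\pi$, so $z=2\cos\theta_p$ is an algebraic integer; since $k$ is odd, $z=a(p)p^{-(k-1)/2}$ is also rational, hence a rational integer; Deligne's bound (\ref{eqn:Deligne}) gives $|z|\le 1$, so $a(p)\in\{0,\pm p^{(k-1)/2}\}$; and these three values are excluded by an explicit norm-form computation with $a(p)=2\,\mathrm{Re}(\pi^{k-1})$ and $p=a^2+b^2$ (resp.\ $p=a^2-ab+b^2$), exactly in the style of Lemma \ref{lem:non0}. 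Alternatively, you could salvage your ``irrational angle'' idea via the tangent rather than the cosine: for $\Z[i]$, $\tan(\arg\pi)=b/a$ is rational, and Niven's theorem for the tangent (a rational tangent at a rational multiple of $\pi$ must lie in $\{0,\pm1\}$) rules out $\arg\pi\in\pi\Q$ except when $p=2$; note, though, that for the Eisenstein case $\tan(\arg\pi)$ is a rational multiple of $\sqrt{3}$, so there you need a further step (e.g.\ showing $\pi/\bar{\pi}$ cannot be a root of unity of $\Q(\sqrt{-3})$ unless $p=3$). Either repair closes the gap; the cosine form of Niven's theorem does not.
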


This paper is sequel to the previous paper \cite{Toy-BM}. 
In this paper, 
we take some lattices related to the imaginary quadratic fields. 
Let $K=\Q(\sqrt{-d})$ be an imaginary quadratic field, 
and let $\mathcal{O}_{K}$ be its ring of algebraic integers. 
Let $\Cl_{K}$ be the ideal classes. In this paper, 
we only consider the cases $\vert \Cl_{K}\vert =1$ 
and $\vert \Cl_{K}\vert =2$ except for Section $6$. 
So, when we consider the cases $\vert \Cl_{K}\vert =1$ 
and $\vert \Cl_{K}\vert =2$, we denote by 
$\oo$ (resp. $\mathfrak{a}$) the principal 
(resp. nonprincipal) ideal class.

We denote by $d_{K}$ the discriminant of $K$:
\begin{eqnarray*}
d_{K}=\left\{
\begin{array}{lll}
-4d\ &{\rm if }\ -d\equiv 2,\ 3&\pmod {4}, \\
-d\ &{\rm if }\ -d\equiv 1 &\pmod {4}. 
\end{array}
\right.
\end{eqnarray*}
\begin{thm}[cf.~{\cite[page~87]{Zagier}}]\label{thm:lattice}
Let $d$ be a positive square-free integer, and let $K=\Q(\sqrt{-d})$. 
Then 
\begin{eqnarray*}
\mathcal{O}_{K}=
\left\{
\begin{array}{lll}
\ZZ+\ZZ\,\sqrt{-d}\quad &if\ -d\equiv 2,\ 3 &\pmod {4}, \\
\ZZ+\ZZ\,\displaystyle\frac{-1+\sqrt{-d}}{2}\quad &if\ -d\equiv 1 &\pmod {4}. 
\end{array}
\right.
\end{eqnarray*}
\end{thm}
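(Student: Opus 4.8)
The plan is to compute $\mathcal{O}_K$ directly from the definition: an element $\alpha\in K$ lies in $\mathcal{O}_K$ precisely when its minimal polynomial over $\Q$ has coefficients in $\Z$. Since $[K:\Q]=2$, I would write an arbitrary element as $\alpha=a+b\sqrt{-d}$ with $a,b\in\Q$. If $b=0$ then $\alpha=a$ is integral iff $a\in\Z$; if $b\neq 0$ the minimal polynomial is $x^2-2ax+(a^2+db^2)$, so integrality is equivalent to the two conditions $\operatorname{Tr}(\alpha)=2a\in\Z$ and $N(\alpha)=a^2+db^2\in\Z$.

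From the trace condition I would set $a=s/2$ with $s\in\Z$, distinguishing $s$ even ($a\in\Z$) from $s$ odd ($a$ a genuine half-integer). Multiplying the norm condition by $4$ gives $4db^2=4(a^2+db^2)-(2a)^2\in\Z$; writing $b=p/q$ in lowest terms forces $q^2\mid 4d$, and here the hypothesis that $d$ is square-free is essential, since it bounds the possible denominator to $q\in\{1,2\}$. Thus $b$ is either an integer or a half-integer, and I would then show that $b$ is a half-integer exactly when $a$ is, by comparing the fractional parts of $a^2$ and $db^2$ modulo $\Z$.

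The crux is the residue bookkeeping modulo $4$. When $a=s/2$ with $s$ odd one has $a^2\equiv 1/4 \pmod{\Z}$, so the norm condition forces $db^2\equiv 3/4 \pmod{\Z}$; writing $b=t/2$ with $t$ odd and using $t^2\equiv 1\pmod{4}$, this reduces to $d\equiv 3\pmod{4}$, i.e. $-d\equiv 1\pmod{4}$. I expect this mod-$4$ analysis, together with the separate check that $d$ even forces $b\in\Z$, to be the main technical point, since it is exactly what separates the two cases of the theorem. Consequently half-integer coordinates occur iff $-d\equiv 1\pmod{4}$, and in the remaining cases $-d\equiv 2,3\pmod{4}$ only $a,b\in\Z$ survive, giving $\mathcal{O}_K=\Z+\Z\sqrt{-d}$.

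Finally, when $-d\equiv 1\pmod{4}$ I would repackage the admissible set $\{\,a+b\sqrt{-d} : a,b\in\Z,\ \text{or } a,b \text{ both half-integers}\,\}$ as a $\Z$-module with the single generator $\omega=\frac{-1+\sqrt{-d}}{2}$. A short computation shows $\operatorname{Tr}(\omega)=-1$ and $N(\omega)=(1+d)/4\in\Z$ (using $d\equiv 3\pmod 4$), so $\omega\in\mathcal{O}_K$; conversely, given $m+n\omega=\frac{2m-n}{2}+\frac{n}{2}\sqrt{-d}$, the parity of $n$ recovers both the integer case ($n$ even) and the half-integer case ($n$ odd), proving $\mathcal{O}_K=\Z+\Z\,\frac{-1+\sqrt{-d}}{2}$.
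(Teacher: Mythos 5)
Your proof is correct and complete: the trace/norm characterization of integrality ($\operatorname{Tr}(\alpha)=2a\in\Z$, $N(\alpha)=a^2+db^2\in\Z$), the use of square-freeness of $d$ to restrict the denominator of $b$ to $\{1,2\}$ and to rule out the mixed case (one coordinate integral, the other a genuine half-integer, which would force $4\mid d$), the mod-$4$ computation isolating $d\equiv 3\pmod 4$ as the only case admitting half-integer coordinates, and the repackaging of that set as $\Z+\Z\,\frac{-1+\sqrt{-d}}{2}$ all check out. Note that the paper itself offers no proof of this theorem --- it is quoted as a classical fact from Zagier's book (cf.\ page 87 of that reference) --- and your argument is precisely the standard one found in such references, so there is nothing to contrast.
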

Therefore, we consider $\OO_{K}$ to be the lattice in $\R^{2}$ 
with the basis 
\begin{eqnarray*}
\left\{
\begin{array}{lll}
\displaystyle(1, 0), (1,\sqrt{-d} )\quad &\mbox{if}\ -d\equiv 2,\ 3 &\pmod {4}, \\
\displaystyle(1, 0), \Big(-\frac{1}{2},\frac{\sqrt{-d}}{2}\Big)\quad &
\mbox{if}\ -d\equiv 1 &\pmod {4}, 
\end{array}
\right.
\end{eqnarray*}
denoted by $L_{\oo}$.

Generally, it is well-known that there exists one-to-one correspondence 
between the set of reduced quadratic forms $f(x, y)$ 
with a fundamental discriminant $d_K<0$ 
and the set of fractional ideal classes of the unique quadratic field 
$\QQ(\sqrt{-d})$ \cite[page~94]{Zagier}. Namely, 
For a fractional ideal $A=\ZZ\alpha +\ZZ\beta$, 
we obtain the quadratic form $ax^2+bxy+cy^2$, where 
$a=\alpha \alpha^{\prime}/N(A)$, 
$b=(\alpha\beta^{\prime}+ \alpha^{\prime}\beta)/N(A)$ and 
$c=\beta \beta^{\prime}/N(A)$. 
Conversely, for a quadratic form $ax^2+bxy+cy^2$, 
we obtain the fractional ideal $\ZZ +\ZZ(b+\sqrt{d_K})/2a$. 
We remark that $N(A)$ is a norm of $A$ and 
$\alpha^{\prime}$ is a complex conjugate of $\alpha$. 

Here, we define the automorphism group of $f(x, y)$ as follows: 
\begin{eqnarray*}
U_{f}=\left\{
\begin{pmatrix}
\alpha &\beta \\
\gamma &\delta
\end{pmatrix}
\in SL_{2}(\ZZ) \Biggm\vert f(\alpha x+\beta y, \gamma x+\delta y)=f(x, y)
\right\}. 
\end{eqnarray*}
Then, for $n\geq 1$, the number of the nonequivalent solutions of $f(x, y)=n$ 
under the action of $U_{f}$ 
is equal to the number of the integral ideals of norm $n$ \cite{Zagier}. 
\begin{thm}[cf.~{\cite[page~63]{Zagier}}]\label{thm:Zagier}
Let $f(x, y)$ be the reduced quadratic form 
with a fundamental discriminant $D<0$ and 
$U_{f}$ be the automorphism group of $f(x, y)$. 
Then 
\begin{eqnarray*}
\sharp U_{f}=
\left\{
\begin{array}{ll}
6 &{\text if}\ D=-3, \\
4 &{\text if}\ D=-4, \\
2 &{\text if}\ D<-4. 
\end{array}
\right.
\end{eqnarray*}
\end{thm}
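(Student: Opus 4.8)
The plan is to reduce the computation of $U_f$ to counting integer solutions of a single Pell-type equation, following the classical theory of automorphs of binary quadratic forms. Write $f(x,y)=ax^2+bxy+cy^2$, so that $D=b^2-4ac$. A direct computation shows that a matrix in $SL_2(\ZZ)$ fixes $f$ if and only if it has the shape
\[
\begin{pmatrix} \frac{t-bu}{2} & -cu \\ au & \frac{t+bu}{2} \end{pmatrix},
\]
where $(t,u)\in\ZZ^2$ satisfies $t^2-Du^2=4$. Hence the theorem follows from two steps: (a) proving that this assignment is a bijection between $U_f$ and the set of solutions $(t,u)$ of $t^2-Du^2=4$, and (b) counting those solutions for each $D$.

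For step (a), I would substitute the displayed matrix into the identity $f(\alpha x+\beta y,\gamma x+\delta y)=f(x,y)$ and compare the coefficients of $x^2$, $xy$, $y^2$ to recover exactly the three relations defining an automorph; conversely, given a solution $(t,u)$ one checks that the entries are integers and that the determinant equals $1$. Integrality follows because $t^2-(bu)^2=t^2-Du^2-4acu^2=4(1-acu^2)$ forces $t\equiv bu\pmod 2$, so $t\pm bu$ are even, and the determinant is
\[
\frac{t^2-b^2u^2}{4}+acu^2=\frac{t^2-Du^2}{4}=1.
\]
Injectivity is immediate, since $(t,u)$ can be read off from the entries: the off-diagonal terms $au$ and $-cu$ determine $u$ (here $a,c>0$ for a positive definite reduced form), and the trace gives $t$.

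For step (b), since $D<0$ the equation becomes $t^2+|D|u^2=4$. As $D$ is a fundamental discriminant we have $|D|\ge 3$, so $|u|\ge 2$ would give $|D|u^2\ge 12>4$; therefore $u\in\{-1,0,1\}$. The value $u=0$ always yields $t=\pm 2$, i.e.\ the two solutions corresponding to $\pm I\in U_f$. For $u=\pm 1$ one needs $t^2=4-|D|$, which is solvable in integers precisely when $|D|\in\{3,4\}$: for $D=-3$ this gives $t=\pm 1$ and hence the four solutions $(\pm1,\pm1)$, for $D=-4$ it gives $t=0$ and the two solutions $(0,\pm1)$, and for $D<-4$ there is none. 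Tallying the solutions yields $\sharp U_f=6,4,2$ according as $D=-3$, $D=-4$, $D<-4$.

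The counting in step (b) is entirely elementary, so the only real work lies in the bookkeeping of step (a): verifying the parametrization and its bijectivity. A conceptually cleaner alternative, which I would at least remark on, is to identify $U_f$ with the unit group $\OO_K^{\times}$ of the ring of integers of $K=\QQ(\sqrt{-d})$ via the correspondence between forms and ideals recalled above; the theorem then becomes the classical determination of $\OO_K^{\times}$ for imaginary quadratic fields, namely the sixth roots of unity when $d_K=-3$, the group $\{\pm1,\pm i\}$ when $d_K=-4$, and $\{\pm1\}$ otherwise.
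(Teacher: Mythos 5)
The paper itself offers no proof of this statement---it is quoted from Zagier's book as a classical result of Gauss and Dirichlet---so your proposal has to stand entirely on its own. Your route, parametrizing $U_{f}$ by the integer solutions of $t^{2}-Du^{2}=4$, is indeed the standard classical one, and your step (b) is complete and correct: $u=0$ gives $\pm I$, $|u|=1$ forces $t^{2}=4-|D|$ (solvable only for $|D|\in\{3,4\}$), and $|u|\geq 2$ is impossible since $|D|\geq 3$ for fundamental $D$. The genuine gap is in step (a). What your plan actually verifies is: every pair $(t,u)$ with $t^{2}-Du^{2}=4$ yields an integral matrix of determinant $1$ preserving $f$, and distinct pairs yield distinct matrices. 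That is one inclusion plus injectivity, and it proves only $\sharp U_{f}\geq 6,4,2$. The direction the theorem actually needs---surjectivity, i.e.\ that \emph{every} automorph of $f$ has the displayed shape---is nowhere argued, and ``substituting the displayed matrix and comparing coefficients'' cannot supply it, since that computation presupposes the shape. For $D<-4$ the entire content of the theorem is the upper bound $\sharp U_{f}\leq 2$, which is exactly the missing direction.

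The gap is not merely formal, because surjectivity genuinely uses the primitivity of $f$ (which follows from $D$ being fundamental, a hypothesis you invoke only to bound $|D|$): for an imprimitive form the parametrization is false. For instance $f=2x^{2}+2y^{2}$ has discriminant $-16$ and admits the automorph $\begin{pmatrix} 0 & -1 \\ 1 & 0 \end{pmatrix}$, which is not of the displayed shape, while $t^{2}+16u^{2}=4$ has only the solutions $(\pm 2,0)$. The standard way to close the gap: let $\begin{pmatrix} \alpha & \beta \\ \gamma & \delta \end{pmatrix}\in U_{f}$ be arbitrary; it fixes the root $\tau=(-b+\sqrt{D})/(2a)$ of $f(x,1)$ under the M\"obius action (it permutes the two conjugate roots and preserves the upper half-plane), so $\gamma\tau^{2}+(\delta-\alpha)\tau-\beta=0$; since $a\tau^{2}+b\tau+c=0$ and $\tau\notin\QQ$, the two quadratics are proportional, giving $(\gamma,\,\delta-\alpha,\,-\beta)=u\,(a,b,c)$ with $u=\gamma/a\in\QQ$; primitivity of $f$ (write $1=ax+by+cz$ with $x,y,z\in\ZZ$) forces $u=(au)x+(bu)y+(cu)z\in\ZZ$; finally, setting $t:=\alpha+\delta$, the determinant condition becomes $t^{2}-Du^{2}=4$. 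With this inserted, your argument is complete. (Your concluding remark identifying $U_{f}$ with $\OO_{K}^{\times}$ is a fine alternative route, but as written it too is an assertion of precisely the point at issue.)
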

These classical results are due to Gauss, Dirichlet, etc. 
Let $\mathfrak{a}$ be an ideal class and 
$f_{\mathfrak{a}}(x, y)$ be the reduced quadratic form 
corresponding to $\mathfrak{a}$. 
Moreover, let $L_{\mathfrak{a}}$ be the lattice corresponding to $f(x, y)$. 
We denote by $N(A)$ the norm of an ideal $A$. 
Then, using Theorem \ref{thm:Zagier}, we have 
\begin{equation}\label{eqn:multi}
\begin{array}{c}
\displaystyle\sum_{x\in L_{\mathfrak{a}}}q^{(x, x)}\hspace{300pt}  \\
=1+\sharp U_{f}\sum_{n=1}^{\infty}
\sharp\{A \mid A\mbox{ is an integral ideal of }\mathfrak{a},\, 
N(A)=n\}\,q^{m}. 
\end{array}
\end{equation}
When $\vert \Cl_{K}\vert =2$, we give the generators of $L_{\mathfrak{a}}$
 in Appendix. 
Here, we remark that when $K=\Q(\sqrt{-1})$ (resp. $K=\Q(\sqrt{-3})$), 
$L_{\oo}$ is $\Z^{2}$-lattice (resp. $A_{2}$-lattice). 
We studied the spherical designs of shells of 
those lattices in the previous paper \cite{Toy-BM}. 

In this paper, we take the imaginary quadratic fields $\QQ(\sqrt{-d})$, 
with $d\neq 1$ and $d\neq 3$. 
Then, we consider the analogue of Lehmer's conjecture 
corresponding to its theta series weighted by some harmonic polynomial $P$.
Here, we consider the following problem that 
whether the nonempty shells of $L_{\oo}$ and $L_{\mathfrak{a}}$ 
are spherical $2$-designs (hence $3$-designs) or not. 

In Section $4$, we study the case that the class number is $1$. 
We show that the $m$-th coefficient of 
the weighted theta series of $L_{\oo}$-lattice does not vanish 
when the shell of norm $m$ of those lattices is not an empty set. 
Or equivalently, we show the following result: 
\begin{thm}\label{thm:classnumber=1}
Let $K=\Q(\sqrt{-d})$ be an imaginary quadratic field 
whose class number is $1$ and $d\neq 1$, $3$ 
i.e., $d$ is in the following set$:$ 
$\{2$, $7$, $11$, $19$, $43$, $67$, $163\}$. 
Then, the nonempty shells in $L_{\oo}$ are not spherical $2$-designs. 
\end{thm}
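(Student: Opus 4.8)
The plan is to exhibit a single harmonic polynomial $P\in {\rm Harm}_2(\R^2)$ whose weighted theta coefficient $a^{(P)}_m$ is nonzero on every nonempty shell; by Lemma \ref{lem:lempache} (with $t=2$) this already shows that $(L_\oo)_m$ is not a spherical $2$-design. Identifying $\R^2$ with $\C$ via the embedding $\sqrt{-d}\mapsto i\sqrt d$, the lattice vector attached to $x\in\OO_K$ has complex coordinate equal to $x$ itself, so I would take the (complex) harmonic polynomial $P(x)=x^2$, i.e.\ the real and imaginary parts of $(x_1+ix_2)^2$. Then
\[
\Theta_{L_\oo,P}(z)=\sum_{x\in\OO_K}x^2\,q^{N(x)}=2\sum_{\mathfrak a}\psi(\mathfrak a)\,q^{N(\mathfrak a)},
\]
where the last sum runs over nonzero integral ideals and $\psi((\alpha)):=\alpha^2$. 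The factor $2$ and the passage to ideals are legitimate precisely because $d\neq 1,3$ forces the units of $\OO_K$ to be $\{\pm1\}$ (so each principal ideal has exactly the two generators $\pm\alpha$, and $(\pm\alpha)^2=\alpha^2$), while $|\Cl_K|=1$ makes every ideal principal, so $\psi$ is well defined and completely multiplicative. Since $\deg P=2$, this is a weight-$3$ cusp form; being the theta series of a Hecke Gr\"ossencharacter it is a normalized CM eigenform (twice the newform in $S_3(\Gamma_0(|d_K|),\chi_{d_K})$), so Lemma \ref{lem:lemrama} and equation \eqref{eqn:Lehmer} apply to $a(m)=\sum_{N(\mathfrak a)=m}\psi(\mathfrak a)$, with $a^{(P)}_m=2a(m)$.

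Next I would reduce the nonvanishing of $a(m)$ to a prime-by-prime statement. A shell $(L_\oo)_m$ is nonempty exactly when $m$ is a norm from $\OO_K$, which (class number $1$) happens iff every prime inert in $K$ occurs to an even power in $m$; combined with the multiplicativity $a(m)=\prod_p a(p^{e_p})$ of \eqref{eqn:mul}, it suffices to prove $a(p^e)\neq0$ for each admissible prime power. For $p$ inert and $e=2f$ even, the unique ideal of norm $p^{2f}$ is $(p^f)$, whence $a(p^{2f})=p^{2f}\neq0$. For $p$ ramified, the unique prime $\mathfrak p$ above $p$ satisfies $\mathfrak p^{e}=(\alpha)$ for some generator $\alpha$, and $a(p^{e})=\alpha^2\neq0$. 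The split case is the heart of the matter and the one genuine obstacle.

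For a split prime $p=\alpha\bar\alpha$ with $\mathfrak p=(\alpha)$, equation \eqref{eqn:Lehmer} with $k=3$ gives $a(p^{e})=p^{e}\,\sin\!\big((e+1)\theta_p\big)/\sin\theta_p$, where $e^{i\theta_p}=\alpha/\bar\alpha$. Thus $a(p^e)=0$ would force $(e+1)\theta_p\in\pi\Z$, i.e.\ $\alpha/\bar\alpha$ to be a root of unity. But $\alpha/\bar\alpha\in K$, and for $d\neq1,3$ the only roots of unity in $K=\Q(\sqrt{-d})$ are $\pm1$; here $\alpha/\bar\alpha=1$ would put $\alpha\in\Q$ (forcing $p$ to be a square) and $\alpha/\bar\alpha=-1$ would make $\alpha$ a $\Q$-multiple of $\sqrt{-d}$ (a ramified, not split, situation), so neither occurs. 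Hence $\theta_p$ is an irrational multiple of $\pi$ and $a(p^e)\neq0$ for every $e\ge0$. Combining the three cases yields $a(m)\neq0$ on every nonempty shell, so $a^{(P)}_m=2a(m)\neq0$, and Lemma \ref{lem:lempache} completes the proof. The crux is the split-prime nonvanishing, where the hypothesis $d\neq1,3$ enters decisively through the absence of nontrivial roots of unity in $K$.
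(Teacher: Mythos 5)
Your proof is correct, and while it shares the paper's skeleton --- identify the degree-$2$ weighted theta series with a weight-$3$ CM Hecke eigenform, use multiplicativity \eqref{eqn:mul} to reduce to prime powers, and control split primes through formula \eqref{eqn:Lehmer} --- it differs at every step where real work is done, and in each case your version is more self-contained. First, you prove the identification $\Theta_{L_\oo,P}=2\Psi_{K,(1)}$ conceptually, by observing that each principal ideal has exactly the two generators $\pm\alpha$ once $d\neq 1,3$ forces $\OO_K^\times=\{\pm1\}$; the paper's Lemma \ref{lem:ono} instead verifies this identity by computing a basis of the ambient space of modular forms in Sage. Second, for ramified primes and even powers of inert primes you read off $a(p^{e})=\alpha^{2}$ resp.\ $a(p^{2f})=p^{2f}$ directly from the unique ideal of that norm, where the paper invokes the Fisher-type bound \eqref{ine:Fisher} on two-point shells (cases (i) and (ii) of its proof). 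Third, and most substantively, at split primes the paper proves $b(p)\neq 0,\pm p$ by explicit manipulation of the representation $p=a^{2}+db^{2}$ (Lemma \ref{lem:non0}) and then applies Proposition \ref{prop:Lehmer}, whose proof needs the rational integrality of the coefficients together with Deligne's bound \eqref{eqn:Deligne} to force $a(p)/p\in\{0,\pm 1\}$; you shortcut all of this by noting that $a(p^{e})=0$ makes $\alpha/\bar\alpha$ a root of unity lying in $K$, which for $d\neq 1,3$ must be $\pm 1$, and both signs contradict $p$ being split. Your route thus avoids both the computer verification and the appeal to Deligne, and it makes the role of the hypothesis $d\neq 1,3$ transparent (absence of nontrivial units and roots of unity). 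What the paper's heavier machinery buys is reusability: Proposition \ref{prop:Lehmer} and the quadratic-form computations are set up so that they transfer, with minor changes, to the class-number-$2$ case of Theorem \ref{thm:classnumber=2}, where the relevant eigenform is a linear combination $c_{1}\Theta_{L_{\oo},P_2}+c_{2}\Theta_{L_{\mathfrak{a}},P_2}$ and your direct unit-counting and root-of-unity arguments would need nontrivial adaptation, since the Hecke character at a nonprincipal prime is only pinned down up to a choice of square root.
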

Similarly, in Section $5$, we study the case that the class number is $2$ 
and show the following result: 
\begin{thm}\label{thm:classnumber=2}
Let $K=\Q(\sqrt{-d})$ be an imaginary quadratic field 
whose class number is $2$ i.e., $d$ is in the following set$:$ 
$\{5$, $6$, $10$, $13$, $15$, $22$, $35$, $37$, $51$, $58$, 
$91$, $115$, $123$, $187$, $235$, $267$, $403$, $427\}$. 
Then, the nonempty shells in $L_{\oo}$ and $L_{\mathfrak{a}}$ are not spherical $2$-designs. 
\end{thm}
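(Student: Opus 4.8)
The plan is to follow the strategy behind Theorem \ref{thm:classnumber=1}, reducing everything to the nonvanishing of the Fourier coefficients of a single Hecke eigenform, and to absorb the complication of the second ideal class by genus theory. Throughout I identify $\R^{2}$ with $\C$ so that $L_{\oo}=\OO_K$ carries the quadratic form $(x,x)=N(x)$, and I take the degree-two harmonic polynomial $P(x+iy)=(x+iy)^{2}$. Because ${\rm Harm}_{2}(\R^{2})$ is spanned over $\R$ by $\mathrm{Re}\,(x+iy)^{2}=x^{2}-y^{2}$ and $\mathrm{Im}\,(x+iy)^{2}=2xy$, Lemma \ref{lem:lempache} shows that it suffices to prove the complex coefficient $a^{(P)}_{m}$ is nonzero for every $m$ with a nonempty shell: a nonzero value of $a^{(P)}_{m}$ forces one of the two real harmonics to have a nonzero coefficient, and hence the shell is not a $2$-design. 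By the general theory recalled above, $\Theta_{L_{\oo},P}$ and $\Theta_{L_{\mathfrak a},P}$ are cusp forms of weight $2/2+2=3$ for $\Gamma_{0}(|d_{K}|)$ with the Kronecker character $\chi_{d_{K}}$.

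Next I would realize these theta series inside the space of CM forms. Let $\psi$ be a Hecke Gr\"ossencharacter of $K$ of infinity type $(2,0)$, normalized by $\psi((\alpha))=\alpha^{2}$ on principal ideals; this is well defined because the only units of $\OO_K$ are $\pm1$ (here $d\neq1,3$) and $(\pm\alpha)^{2}=\alpha^{2}$. Since $|\Cl_K|=2$, there are exactly two extensions $\psi_{0},\psi_{1}$ of this infinity type to all ideals, differing by the nontrivial genus (class-group) character; the associated theta series $g_{i}(z)=\sum_{\mathfrak b}\psi_{i}(\mathfrak b)\,q^{N(\mathfrak b)}$ are, by a classical theorem of Hecke, normalized Hecke eigenforms in $S_{3}(\Gamma_{0}(|d_{K}|),\chi_{d_{K}})$, so Lemma \ref{lem:lemrama} applies to their coefficients $c_{i}(m)=\sum_{N(\mathfrak b)=m}\psi_{i}(\mathfrak b)$. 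A direct count of ideals by their generators (as in \eqref{eqn:multi}) then gives $\Theta_{L_{\oo},P}=g_{0}+g_{1}$ and $\Theta_{L_{\mathfrak a},P}=\psi_{0}(\mathfrak a_{0})(g_{0}-g_{1})$ up to the normalization coming from $N(\mathfrak a_{0})$, where $\mathfrak a_{0}$ is a fixed integral ideal in the class $\mathfrak a$.

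The crucial step is a genus-theoretic observation valid precisely because $|\Cl_K|=2$: all integral ideals of a fixed norm $m$ lie in one and the same ideal class $\kappa(m)$. Indeed an ideal of norm $m=\prod_{p}p^{a_{p}}$ is a product of local factors of norm $p^{a_{p}}$, and for each $p$ every such local factor lies in the single class $[\mathfrak p]^{a_{p}}$, using $[\overline{\mathfrak p}]=[\mathfrak p]^{-1}=[\mathfrak p]$ in the two-element group $\Cl_K$. Consequently, when the principal shell $(L_{\oo})_{m}$ is nonempty we have $\kappa(m)=\oo$, every ideal of norm $m$ is principal, and $a^{(P)}_{m}=\sum_{N(\alpha)=m}\alpha^{2}=2c_{0}(m)$; likewise, when the shell $(L_{\mathfrak a})_{m}$ is nonempty we have $\kappa(m)=\mathfrak a$ and, writing $(z)=\mathfrak a_{0}\mathfrak c$ for $z\in\mathfrak a_{0}$, the coefficient $a^{(P)}_{m}$ is a nonzero multiple of $c_{0}(m)$ (with leading factor $2\psi_{0}(\mathfrak a_{0})$). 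In both cases $a^{(P)}_{m}$ is a nonzero scalar times the multiplicative eigenform coefficient $c_{0}(m)$, so the problem collapses to showing $c_{0}(m)\neq0$ whenever $m$ is a norm, exactly the statement already needed for Theorem \ref{thm:classnumber=1}.

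Finally I would establish $c_{0}(m)=\prod_{p}c_{0}(p^{a_{p}})\neq0$ by treating each prime through Lemma \ref{lem:lemrama}. For $p$ inert ($\chi_{d_{K}}(p)=-1$) the recursion \eqref{eqn:rec} gives $c_{0}(p^{2\beta})=p^{2\beta}$ and $c_{0}(p^{2\beta+1})=0$, matching the fact that a nonempty shell forces each inert prime to even exponent. For $p$ ramified ($\chi_{d_{K}}(p)=0$) the same recursion gives $c_{0}(p^{a})=\psi_{0}(\mathfrak p)^{a}\neq0$. For $p$ split ($\chi_{d_{K}}(p)=1$) I would use \eqref{eqn:Lehmer}: $c_{0}(p^{a})=0$ would force $\psi_{0}(\mathfrak p)/\overline{\psi_{0}(\mathfrak p)}$ to be an $(a+1)$-st root of unity lying in $K$, hence $\pm1$; but the value $1$ makes $\psi_{0}(\mathfrak p)\in\Q$ and the value $-1$ makes it purely imaginary, and either possibility contradicts $|\psi_{0}(\mathfrak p)|=p$ together with $\psi_{0}(\mathfrak p)\in K=\Q(\sqrt{-d})$ (using that $p$ is split, so $p\neq d$ and $p$ is not a square). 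Thus $c_{0}(p^{a})\neq0$ for every split $p$. The main obstacle is therefore neither the split-prime estimate nor the Hecke bookkeeping but the genus-theoretic reduction, which is precisely what confines the method to $|\Cl_K|\le2$; it is worth stressing that the split-prime step is the exact analogue of the intractable assertion in Lehmer's conjecture, and it becomes tractable here for the very reason that $d\neq1,3$, namely that $K$ contains no roots of unity beyond $\pm1$.
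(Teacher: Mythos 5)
Your proposal follows the paper's master strategy---realize the weighted theta series inside the space of CM Hecke eigenforms, reduce to prime powers via the multiplicativity (\ref{eqn:mul}), dispose of inert and ramified primes, and attack split primes with a Lehmer-type root-of-unity argument---but it implements each step differently, and mostly more conceptually. The paper's Lemma \ref{lem:ono2} establishes $\Psi_{K,\Lambda}=c_1\Theta_{L_{\oo},P_2}+c_2\Theta_{L_{\mathfrak{a}},P_2}$ by machine computation (Sage) in the relevant space of modular forms, with $c_1,c_2$ tabulated case by case; you instead derive $\Theta_{L_{\oo},P}=g_0+g_1$ and $\Theta_{L_{\mathfrak{a}},P}\propto g_0-g_1$ by counting generators of ideals, which explains those constants and eliminates the computational verification. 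Your genus-theoretic observation that all integral ideals of a fixed norm lie in a single class is exactly the paper's Lemma \ref{lem:coprime}. At inert and ramified primes you use the recursion (\ref{eqn:rec}), where the paper invokes the Fisher-type bound (\ref{ine:Fisher}) on two-point shells; both work, and yours meshes better with the multiplicativity step. At split primes the paper proves $b(p)\neq 0$ and $b(p)\neq\pm p$ by explicit representations $mp=a^2+db^2$ (Lemma \ref{lem:non1}) and then applies Proposition \ref{prop:Lehmer}, while you argue directly on the Hecke-character side.

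That last step, however, has two genuine (though reparable) gaps. (a) You assert that $\psi_0(\mathfrak{p})/\overline{\psi_0(\mathfrak{p})}$ lies in $K$ without proof. This is precisely where $\vert\Cl_K\vert\le 2$ enters and it must be said: since $\mathfrak{p}^2=(\pi)$ is principal, $\psi_0(\mathfrak{p})^2=\psi_0(\mathfrak{p}^2)=\pi^2$, hence $\psi_0(\mathfrak{p})=\pm\pi\in\OO_K$; without this the ratio is only known to lie in some larger field and the ``roots of unity in $K$'' step collapses (as it genuinely does for $h\ge 3$, cf.\ Section 6 of the paper). (b) Your elimination of the ratio value $+1$ is not a valid deduction: $\psi_0(\mathfrak{p})=\pm p$ is perfectly consistent with the two facts you cite, namely $|\psi_0(\mathfrak{p})|=p$ and $\psi_0(\mathfrak{p})\in K$, so no contradiction follows as stated. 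The repair is easy: if the ratio equals $1$ then $\psi_0(\mathfrak{p})=\overline{\psi_0(\mathfrak{p})}$ and the geometric sum gives $c_0(p^a)=(a+1)\psi_0(\mathfrak{p})^a\neq 0$, so this case never yields a vanishing coefficient; alternatively, $\psi_0(\mathfrak{p})=\pm p$ forces $\pi=\pm p$, i.e.\ $\mathfrak{p}^2=(p)=\mathfrak{p}\overline{\mathfrak{p}}$, hence $\mathfrak{p}=\overline{\mathfrak{p}}$, contradicting that $p$ splits. For the value $-1$ your conclusion is right, and the clean justification is that $\psi_0(\mathfrak{p})=\pm\pi$ purely imaginary forces $\pi=n\sqrt{-d}$ with $n\in\ZZ$, so $p^2=N(\pi)=n^2d$, impossible because $d$ is square-free, $d\neq 1$, and $\gcd(p,d)=1$ for split $p$. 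With (a) and (b) patched, your argument is complete and constitutes a genuine alternative to the paper's Lemmas \ref{lem:ono2} and \ref{lem:non1}.
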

In Section $6$, we consider the case that the class number is $3$ 
and study the property of Heche characters. 
In Section $7$, we give some concluding remarks and 
state a conjecture for the future study. 
\section{Preliminaries}
In this section, we review the theory of imaginary quadratic fields. 
\begin{thm}[cf.~{\cite[page 104,~Proposition 5.16]{Cox}}]\label{thm:facprime}
We can classify the prime ideals of a quadratic field as follows{\rm :}
\begin{enumerate}
\item 
If $p$ is an odd prime and $(d_{K}/p)=1$ 
$($resp.\ $d_{K}\equiv 1 \pmod{8}$$)$ then 
$(p)=P \overline{P}\ (resp.\ (2)=P \overline{P})$, 
where $P$ and $\overline{P}$ are prime ideals 
with $P\neq \overline{P}$, 
$N(P)=N(\overline{P})=p$ $($resp.\ $N(P)=2$$)$. 
\item 
If $p$ is an odd prime and $(d_{K}/p)=-1$ 
$($resp.\ $d_{K}\equiv 5 \pmod{8}$$)$ then 
$(p)=P\ (resp.\ (2)=P)$, 
where $P$ is a prime ideal with $N(P)=p^{2}$ $($resp.\ $N(P)=4$$)$. 
\item 
If $p\ \vert\ d_{k}$ then 
$(p)=P^{2}$, 
where $P$ is a prime ideal with $N(P)=p$. 
\end{enumerate}
%\begin{tabular}{ll} 
%{\rm (i)} & If p is an odd prime and $(d_{K}/p)=1$ then $(p)=P P^{\prime}$, 
%$P \neq P^{\prime}$, $N(P)=p$. \\ 
%& Moreover, if $d_{K}\equiv 1 \pmod{8}$ then $(2)=P P^{\prime}$, 
%$P \neq P^{\prime}$, $N(P)=2$. \\
%{\rm (ii)}& If p is an odd prime and $(d_{K}/p)=-1$ then $(p)=P$, $N(P)=p^{2}$.% \\ 
%& Moreover, if $d_{K}\equiv 5 \pmod{8}$ then $(2)=P $, $N(P)=4$. \\
%{\rm (iii)}& If p is an odd prime and $p\ \vert\ d_{k}$ then $(p)=P^{2}$, $N(P)%=p$. 
%\end{tabular}
\end{thm}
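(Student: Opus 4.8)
\section*{Proof proposal for Theorem~\ref{thm:facprime}}

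The plan is to reduce the entire classification to the factorization of a single quadratic polynomial modulo $p$. First I would record the structural constraint coming from $[K:\Q]=2$. Since $K=\Q(\sqrt{-d})$ is a Galois extension of $\Q$, its Galois group acts transitively on the primes above a fixed rational prime $p$, so all ramification indices (resp.\ all residue degrees) of those primes coincide and the fundamental identity $\sum_i e_if_i=[K:\Q]=2$ collapses to $efg=2$. This leaves exactly three possibilities: $(e,f,g)=(1,1,2)$, giving $(p)=P\overline P$ with $P\neq\overline P$; $(e,f,g)=(1,2,1)$, giving $(p)=P$ inert; and $(e,f,g)=(2,1,1)$, giving $(p)=P^2$ ramified. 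Because $N(P)=p^{f}$, these three cases already carry the asserted norms $p$, $p^2$, $p$ respectively, so it remains only to match each case with the stated arithmetic condition on $p$.

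Next I would exploit that $\mathcal{O}_{K}$ is monogenic. By Theorem~\ref{thm:lattice} we may write $\mathcal{O}_{K}=\Z[\theta]$ with $\theta=\sqrt{-d}$ when $-d\equiv 2,3\pmod 4$ and $\theta=(-1+\sqrt{-d})/2$ when $-d\equiv 1\pmod 4$; the respective minimal polynomials are $f(x)=x^{2}+d$ and $f(x)=x^{2}+x+(1+d)/4$, and in both cases a direct computation gives $\mathrm{disc}(f)=d_{K}$ exactly. In particular $[\mathcal{O}_{K}:\Z[\theta]]=1$, so the Kummer--Dedekind theorem applies for \emph{every} prime $p$: the factorization type of $(p)$ in $\mathcal{O}_{K}$ mirrors that of $f(x)\bmod p$ in $\FF_{p}[x]$. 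Thus $(p)$ splits, is inert, or ramifies according as $f\bmod p$ has two distinct roots, is irreducible, or is a perfect square.

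For odd $p$ the factorization type of the quadratic $f\bmod p$ is governed by its discriminant $d_{K}$: two distinct roots iff $d_{K}$ is a nonzero square modulo $p$, i.e.\ $(d_{K}/p)=1$; irreducible iff $(d_{K}/p)=-1$; a double root iff $p\mid d_{K}$. Combined with the trichotomy and the norm formula $N(P)=p^{f}$ from the first paragraph, this settles items (1)--(3) for all odd $p$. Note that ramification is in fact handled uniformly for every $p$ by the criterion $p\mid\mathrm{disc}(f)=d_{K}$, so the genuinely separate work at $p=2$ concerns only the split/inert dichotomy.

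The one point requiring separate care---and the main obstacle---is therefore the prime $p=2$, where the Legendre symbol is unavailable and the two generators $\theta$ behave differently. Here I would reduce $f(x)$ modulo $2$ by hand and read off the splitting from $d_{K}\bmod 8$. When $-d\equiv 1\pmod 4$ one has $d_{K}=-d$ odd and $f(x)=x^{2}+x+(1+d)/4$; a short parity check shows $(1+d)/4$ is even exactly when $d_{K}\equiv 1\pmod 8$, whence $f\equiv x(x+1)\pmod 2$ has two distinct roots and $2$ splits, while $(1+d)/4$ is odd exactly when $d_{K}\equiv 5\pmod 8$, whence $f\equiv x^{2}+x+1\pmod 2$ is irreducible and $2$ is inert. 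When instead $2\mid d_{K}$ (the case $-d\equiv 2,3\pmod 4$, where $d_{K}=-4d$) one checks that $f=x^{2}+d$ reduces to a perfect square mod $2$, giving ramification. Assembling the odd-$p$ and $p=2$ analyses, together with $N(P)=p^{f}$, yields all three items. Alternatively one may simply invoke the standard reference \cite[Proposition~5.16]{Cox}.
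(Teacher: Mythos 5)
The paper never proves Theorem~\ref{thm:facprime} at all: it is stated as a quoted result, with the proof outsourced to the citation \cite[Proposition 5.16]{Cox}. So there is no internal argument to measure yours against, and the relevant question is only whether your self-contained proof is correct --- it is. Your route is the standard Kummer--Dedekind argument: Theorem~\ref{thm:lattice} makes $\mathcal{O}_{K}=\Z[\theta]$ monogenic, your discriminant computations are right ($\mathrm{disc}(x^{2}+d)=-4d=d_{K}$ when $-d\equiv 2,3\pmod 4$, and $\mathrm{disc}(x^{2}+x+(1+d)/4)=-d=d_{K}$ when $-d\equiv 1\pmod 4$), so the index is $1$ and the criterion applies at every prime including $p=2$; the Galois trichotomy $efg=2$ with $N(P)=p^{f}$ pins the norms; the Legendre symbol of the discriminant settles the odd primes; and your parity check at $p=2$ is accurate ($(1+d)/4$ is even iff $d\equiv 7\pmod 8$ iff $d_{K}\equiv 1\pmod 8$, giving $f\equiv x(x+1)$ and a split prime, versus $f\equiv x^{2}+x+1$ irreducible over $\FF_{2}$ when $d_{K}\equiv 5\pmod 8$, and a perfect square when $2\mid d_{K}$). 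The identification of the two split factors as a conjugate pair $P,\overline{P}$ is covered by the Galois transitivity you invoke at the outset. What your approach buys is self-containedness --- the theorem is derived from Theorem~\ref{thm:lattice} using only standard facts --- at the cost of roughly a page of routine algebraic number theory that the paper, reasonably for its purposes, chose simply to cite.
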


\begin{lem}\label{lem:Takagi}
Let $I$ be an integral ideal of $K$. 
For $n\in \NN$, 
if $N(I)=n$ and $I$ is a principal ideal, namely, $I\in \oo$ 
then there exist $a$, $b\in \Z$ 
such that for $-d\equiv 2,\ 3 \pmod{4}$ 
\begin{eqnarray*}
n=a^2+db^2, 
\end{eqnarray*}
for $-d\equiv 1 \pmod{4}$ 
\begin{eqnarray*}
n=a^2+db^2\quad or\quad n=\frac{a^2+db^2}{4}. 
\end{eqnarray*}

If $\vert \Cl_{K} \vert =2$, $N(I)=n$ and 
$I$ is a nonprincipal ideal, namely, $I\in \mathfrak{a}$ 
and assume that $m$ is one of the norm of nonprincipal ideals 
then there exist $a$, $b\in \Z$ such that for $-d\equiv 2,\ 3 \pmod{4}$ 
\begin{eqnarray*}
mn=a^2+db^2, 
\end{eqnarray*}
for $-d\equiv 1 \pmod{4}$ 
\begin{eqnarray*}
mn=a^2+db^2\quad or\quad mn=\frac{a^2+db^2}{4}. 
\end{eqnarray*}
\end{lem}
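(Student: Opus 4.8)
The plan is to reduce every case to the one in which the ideal is principal, since there the ideal norm coincides with the field norm, and then to read off the representation directly from the explicit $\ZZ$-basis of $\OO_{K}$ supplied by Theorem \ref{thm:lattice}. So first I would treat the principal case. If $I\in\oo$, write $I=(\gamma)$ with $\gamma\in\OO_{K}$. Because $K$ is imaginary quadratic the field norm $\gamma\overline{\gamma}=|\gamma|^{2}$ is nonnegative, so no absolute value or unit ambiguity intervenes and $n=N(I)=\gamma\overline{\gamma}$.

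Now I substitute the basis of Theorem \ref{thm:lattice}. When $-d\equiv 2,3\pmod 4$ we may write $\gamma=a+b\sqrt{-d}$ with $a,b\in\Z$, and then $\gamma\overline{\gamma}=a^{2}+db^{2}$, which is precisely $n=a^{2}+db^{2}$. When $-d\equiv 1\pmod 4$ we may write $\gamma=x+y\,\frac{-1+\sqrt{-d}}{2}$ with $x,y\in\Z$; a short computation using $\omega+\overline{\omega}=-1$ and $\omega\overline{\omega}=\frac{1+d}{4}$ for $\omega=\frac{-1+\sqrt{-d}}{2}$ gives $\gamma\overline{\gamma}=x^{2}-xy+\frac{1+d}{4}y^{2}$, whence $4n=(2x-y)^{2}+dy^{2}$. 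Setting $a=2x-y$ and $b=y$ yields $n=\frac{a^{2}+db^{2}}{4}$, and when $y$ is even (say $y=2y'$) this collapses to $n=(x-y')^{2}+d\,y'^{2}$, i.e.\ the form $a^{2}+db^{2}$. This is exactly the asserted dichotomy in the principal case.

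Next I would handle the nonprincipal case under the hypothesis $\vert\Cl_{K}\vert=2$. By hypothesis $m$ is the norm of some nonprincipal ideal $J\in\mathfrak{a}$, and $I\in\mathfrak{a}$ as well. Since $\Cl_{K}\cong\Z/2\Z$, the product class $\mathfrak{a}\cdot\mathfrak{a}$ is the principal class $\oo$, so $IJ$ is a principal ideal; by multiplicativity of the ideal norm in the Dedekind domain $\OO_{K}$ we get $N(IJ)=N(I)N(J)=nm$. Applying the principal case already established to $IJ$ produces $a,b\in\Z$ with $mn=a^{2}+db^{2}$ when $-d\equiv 2,3\pmod 4$, and $mn=a^{2}+db^{2}$ or $mn=\frac{a^{2}+db^{2}}{4}$ when $-d\equiv 1\pmod 4$, as required.

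The computations themselves are routine. The two points deserving care are the identification $N((\gamma))=\gamma\overline{\gamma}$ (valid precisely because the field norm is positive definite on an imaginary quadratic field) and the clean multiplication by $4$ that turns $x^{2}-xy+\frac{1+d}{4}y^{2}$ into $a^{2}+db^{2}$. I do not expect a genuine obstacle here; the one structural input, rather than a difficulty, is the order-$2$ class group fact that a product of two ideals in the nonprincipal class is principal, which is exactly what lets me transport the principal-case representation to the value $mn$.
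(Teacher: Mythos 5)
Your proposal is correct and follows essentially the same route as the paper: in the principal case you read off the representation of $n$ from a generator of $I$ written in the integral basis of $\OO_{K}$, and in the class-number-$2$ case you multiply $I$ by a nonprincipal ideal $J$ of norm $m$ so that $IJ$ is principal and apply the principal case to $N(IJ)=mn$. The only difference is cosmetic: you carry out the norm computation for $-d\equiv 1\pmod 4$ explicitly in the basis $\{1,(-1+\sqrt{-d})/2\}$, where the paper simply writes the generator as $(a+b\sqrt{-d})/2$ directly.
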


\begin{proof}
We assume that $\vert \Cl_{K} \vert =1$. 
For $-d\equiv 2,\ 3 \pmod{4}$, we can write $I=(a+b\sqrt{-d})$, 
then $N(I)=a^2+db^2$. 
For $-d\equiv 1 \pmod{4}$, we can write $I=(a+b\sqrt{-d})$ or 
$I=((a+b\sqrt{-d})/2)$, 
then $N(I)=a^2+db^2$ or $N(I)=(a^2+db^2)/4$. 

Here, we assume that $\vert \Cl_{K} \vert =2$. 
Let $J$ be the nonprincipal ideal of $K$ whose norm is $m$. 
If $I$ is a nonprincipal ideal then, $JI$ is a principal ideal of $K$. 
Therefore, for $-d\equiv 2,\ 3 \pmod{4}$, we can write $JI=(a+b\sqrt{-d})$, 
then $N(JI)=a^2+db^2$. Hence, $mn=a^2+db^2$. 
for $-d\equiv 1 \pmod{4}$, we can write $JI=(a+b\sqrt{-d})$ or 
$JI=((a+b\sqrt{-d})/2)$, 
then $N(JI)=a^2+db^2$ or $N(JI)=(a^2+db^2)/4$. 
Hence, $mn=a^2+db^2$ or $mn=(a^2+db^2)/4$. 
\end{proof}

\begin{prop}\label{prop:NUM}
Let $F(m)$ be the number of the integral ideals of norm $m$ of $K$. 
Let $p$ be a prime number. 
Then, if $p\neq 2$ 
\begin{eqnarray*}
F(p^{e})=\left\{
\begin{array}{lll}
e+1 &{ if}\ \left(d_{K}/p\right)=1, \\
(1+(-1)^e)/2 &{ if}\ \left(d_{K}/p\right)=-1, \\
1 &{ if}\ p\ \vert\ d_{K}, 
\end{array} 
\right.
\end{eqnarray*}
if $p= 2$ 
\begin{eqnarray*}
F(2^{e})=\left\{
\begin{array}{lll}
e+1 &{ if}\ d_{K}\equiv 1 \pmod{8}, \\
(1+(-1)^e)/2 &{ if}\ d_{K}\equiv 5 \pmod{8}, \\
1 &{ if}\ 2\ \vert\ d_{K}. 
\end{array} 
\right.
\end{eqnarray*}
\end{prop}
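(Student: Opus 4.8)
The plan is to reduce the counting of integral ideals of norm $p^{e}$ to a purely local computation at the single prime $p$, exploiting two structural facts about the Dedekind domain $\OO_{K}$: the unique factorization of ideals into prime ideals, and the complete multiplicativity of the ideal norm. First I would observe that if $I$ is an integral ideal with $N(I)=p^{e}$, then every prime ideal occurring in the factorization of $I$ must lie above $p$. Indeed, a prime ideal lying above a rational prime $q$ has norm a power of $q$, so a prime factor of $I$ with norm a power of some prime other than $p$ would contribute that prime to $N(I)=p^{e}$, a contradiction. Hence $I$ is completely determined by its exponents on the prime ideals lying over $p$, and $F(p^{e})$ is exactly the number of products of such primes whose norm equals $p^{e}$.

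Next I would invoke Theorem \ref{thm:facprime} to list the prime ideals above $p$ in each of the three splitting types and carry out the elementary count. In the split case ($(d_{K}/p)=1$ for odd $p$, or $d_{K}\equiv 1 \pmod{8}$ for $p=2$) there are two distinct primes $P\neq \overline{P}$, each of norm $p$, so an ideal of norm $p^{e}$ has the shape $P^{a}\overline{P}^{b}$ with $N(P^{a}\overline{P}^{b})=p^{a+b}=p^{e}$; the nonnegative solutions of $a+b=e$ number $e+1$. In the inert case ($(d_{K}/p)=-1$, or $d_{K}\equiv 5 \pmod{8}$) the only prime over $p$ is $P$ of norm $p^{2}$, so an ideal of norm $p^{e}$ is $P^{a}$ with $2a=e$, which admits a unique solution when $e$ is even and none when $e$ is odd, giving $(1+(-1)^{e})/2$. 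In the ramified case ($p\mid d_{K}$) the only prime over $p$ is $P$ of norm $p$, forcing the unique ideal $P^{e}$, so $F(p^{e})=1$.

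There is no genuine obstacle here: the statement is a direct consequence of Theorem \ref{thm:facprime} together with the standard structure theory of Dedekind domains. The only points requiring minor care are the verification that the ideal norm is completely multiplicative, so that the norms of prime-power ideals multiply as claimed, and the bookkeeping in the split case, where one must correctly enumerate the lattice points on the segment $a+b=e$. Finally, I would note that the treatment for $p=2$ is formally identical to that for odd $p$, since Theorem \ref{thm:facprime} supplies the same trichotomy of splitting behavior, now governed by the residue of $d_{K}$ modulo $8$ rather than by the Legendre symbol.
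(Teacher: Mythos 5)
Your proposal is correct and follows essentially the same route as the paper: invoke the splitting behavior of $p$ (Theorem \ref{thm:facprime}), note that any ideal of norm $p^{e}$ factors only into primes above $p$, and enumerate the possibilities in each of the three cases. The paper's own proof does exactly this for the split case (listing $P^{e}, P^{e-1}\overline{P}, \ldots, \overline{P}^{e}$) and dismisses the inert and ramified cases as similar; you have merely written out the details it omits.
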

\begin{proof}
When $\left(d_{K}/p\right)=1$ i.e., $(p)=P \overline{P}$ and 
$P \neq \overline{P}$, 
since $P$ and $\overline{P}$ are the only integral ideals 
of norm $p$, 
we have $F(p)=2$. 
Moreover, the integral ideals of norm $p^{e}$ are as follows: 
$P^{e}$, $P^{e-1} \overline{P}$, \ldots , 
$(\overline{P})^{e}$. So, we have $F(p^{e})=e+1$. 
The other cases can be proved similarly. 
\end{proof}

%Let $L$ be the $2$-dimensional lattice 
%and $P\in {\rm Harm}_{2}(\R^{2})$. 
%Here, we consider the theta series of $L$ weighted by $P$: 
%\begin{eqnarray*}
%\Theta_{L, P}=\sum_{m\geq 1}a(m)q^{m}. 
%\end{eqnarray*}

\section{Hecke characters and Theta series}
In this section, we introduce the Hecke character and 
discuss the relationships between the Hecke character and the 
weighted theta series of the lattices $L_{\oo}$ and $L_\mathfrak{a}$. 
Then, we show that for $\vert \Cl_{K}\vert =1$ and $P_1=(x^2-y^2)/2$, 
the weighted theta series $\Theta_{L_{\oo}, P_1}$ 
is a normalized Hecke eigenform. 
For $\vert \Cl_{K}\vert =2$ and $P_2=x^2-y^2$, 
a certain sum of the two weighted theta series $c_1\Theta_{L_{\oo}, P_2}+
c_2\Theta_{L_{\mathfrak{a}}, P_2}$ is a normalized Hecke eigenform. 
Later, we give the explicit values of $c_1$ and $c_2$. 

A Hecke character $\phi$ of weight $k\geq 2$ 
with modulus $\Lambda$ is defined in the following way. 
Let $\Lambda $ be a nontrivial ideal in $\OO_{K}$ and 
let $I(\Lambda)$ denote the group of fractional ideals prime to 
$\Lambda$. A Hecke character $\phi$ with modulus $\Lambda$ 
is a homomorphism 
\begin{eqnarray*}
\phi : I(\Lambda)\rightarrow \C^{\times} 
\end{eqnarray*}
such that for each $\alpha \in K^{\times}$ with 
$\alpha \equiv 1 \pmod{\Lambda}$ we have 
\begin{eqnarray}
\phi(\alpha \OO_{K}) =\alpha^{k-1}. \label{eqn:principal}
\end{eqnarray}
Let $\omega_{\phi}$ be the Dirichlet character with the property 
that 
\begin{eqnarray*}
\omega_{\phi}(n):=\phi((n))/n^{k-1} 
\end{eqnarray*}
for every integer $n$ coprime to $\Lambda$. 

\begin{thm}[cf.~{\cite[page~9]{Web}},~~{\cite[page~183]{Miyake}}]\label{thm:ono}
Let the notation be the same as above, 
and define $\Psi_{K, \Lambda}(z)$ by 
\begin{eqnarray}
\Psi_{K, \Lambda}(z) :=\sum _{ A}\phi(A)q^{N(A)}
=\sum_{n=1}^{ \infty}a(n)q^{n}, \label{eqn:ono} 
\end{eqnarray}
where the sum is over the integral ideals $A$ that are prime 
to $\Lambda$ and $N(A)$ is the norm of the ideal $A$. 
Then $\Psi_{K, \Lambda}(z)$ is a cusp form in $S_{k}(\Gamma_{0}(d_K\cdot N(\Lambda)), 
\left( \frac{-d_K}{\bullet} \right)\omega_{\phi})$. 
\end{thm}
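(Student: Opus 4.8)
The plan is to realize $\Psi_{K, \Lambda}$ as a finite sum of theta series weighted by harmonic polynomials, and then to invoke the transformation theory of such theta series (Hecke, Schoeneberg) that is already recalled in Section~1. The key observation is that the exponent $\alpha^{k-1}$ appearing in the defining property \eqref{eqn:principal} of a Hecke character is, after the embedding $K\hookrightarrow\C\cong\R^{2}$, precisely a harmonic polynomial of degree $k-1$ in two real variables; hence the weight $\tfrac{2}{2}+(k-1)=k$ comes out automatically from the general theory.

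First I would group the integral ideals $A$ prime to $\Lambda$ according to their class in the ray class group modulo $\Lambda$. Choosing in each inverse class a fixed integral representative $\mathfrak{b}$ prime to $\Lambda$, every ideal $A$ in the given class satisfies $A\mathfrak{b}=(\mu)$ for some $\mu\in\mathfrak{b}$, well defined up to units and up to a congruence condition $\mu\equiv(\text{fixed residue})\pmod{\Lambda}$. Using $N(A)=N(\mu)/N(\mathfrak{b})$ together with $\phi((\mu))=\mu^{k-1}$ from \eqref{eqn:principal}, the partial sum over the class becomes
\begin{eqnarray*}
\frac{1}{\phi(\mathfrak{b})}\sum_{\mu}\mu^{k-1}\,q^{N(\mu)/N(\mathfrak{b})},
\end{eqnarray*}
the sum running over the relevant residues of the lattice $\mathfrak{b}\subset\C\cong\R^{2}$. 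Writing $\mu=x+iy$, the coefficient $\mu^{k-1}=(x+iy)^{k-1}$ lies in ${\rm Harm}_{k-1}(\R^{2})$ and $N(\mu)=x^{2}+y^{2}$ is the associated positive definite quadratic form, so each partial sum is exactly a theta series of the two-dimensional lattice $\mathfrak{b}$ weighted by a harmonic polynomial of degree $k-1$, refined by characteristics coming from the congruence modulo $\Lambda$.

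Next I would apply the transformation law (Poisson summation / theta inversion) for these weighted theta series. As recorded in the excerpt, a theta series of a lattice in $\R^{2}$ weighted by $P\in{\rm Harm}_{k-1}(\R^{2})$ is a modular form of weight $k$, and since $\deg P=k-1\geq 1$ it is in fact a cusp form; this disposes of holomorphy and of vanishing at every cusp at one stroke. Summing the finitely many class contributions then yields a cusp form of weight $k$ on a congruence subgroup, and reassembling the characteristics recovers precisely the Dirichlet series $\sum_{n}a(n)q^{n}$ of \eqref{eqn:ono}.

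The main obstacle is the precise determination of the level and the nebentypus. One must show that the level of the resulting congruence subgroup is exactly $\Gamma_{0}(d_{K}\,N(\Lambda))$ and that the automorphy factor assembles into the character $\left(\frac{-d_{K}}{\bullet}\right)\omega_{\phi}$. This requires tracking two independent sources of characters through the theta inversion: the quadratic (genus) character $\left(\frac{-d_{K}}{\bullet}\right)$ produced by the discriminant of the norm form together with the attendant Gauss sums, and the Dirichlet character $\omega_{\phi}$ determined by the Hecke character via $\omega_{\phi}(n)=\phi((n))/n^{k-1}$. An alternative and in some ways cleaner route avoids the explicit theta bookkeeping: form the Hecke $L$-function $L(s,\phi)=\sum_{A}\phi(A)N(A)^{-s}$, use Hecke's theorem that $L(s,\phi)$ and all of its Dirichlet twists continue holomorphically and satisfy functional equations under $s\mapsto k-s$ (since twisting a Gr\"ossencharakter again yields a Gr\"ossencharakter), and then invoke Weil's converse theorem to conclude that $\sum_{n}a(n)q^{n}$ is a cusp form of weight $k$, level $d_{K}\,N(\Lambda)$, and the stated character.
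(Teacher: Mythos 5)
The paper itself contains no proof of this statement: Theorem 3.1 is quoted (``cf.'') from Ono \cite[p.~9]{Web} and Miyake \cite[p.~183]{Miyake} and is used downstream as a black box, so there is no internal argument to measure you against. Your sketch is, in substance, a reconstruction of the proof in those cited references, i.e.\ the classical Hecke--Schoeneberg argument: group the integral ideals prime to $\Lambda$ by (ray) class, write each ideal in a fixed class as $(\mu)\mathfrak{b}^{-1}$ with $\mu$ running over a congruence-restricted subset of the rank-two lattice $\mathfrak{b}\subset\C\cong\R^{2}$, observe that $\mu\mapsto\mu^{k-1}$ is a homogeneous harmonic polynomial of degree $k-1$, and invoke the weight formula $2/2+(k-1)=k$ together with cuspidality of positive-degree harmonically weighted theta series. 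That skeleton is correct, and the two routes you offer for the step you leave open---the exact level $d_{K}\,N(\Lambda)$ and nebentypus $\left(\frac{-d_K}{\bullet}\right)\omega_{\phi}$, either by tracking Gauss sums through theta inversion or by Hecke $L$-functions plus Weil's converse theorem---are both standard and workable; you are right that this is where essentially all the technical work lies. Two details a complete write-up must make explicit: first, summing over lattice points $\mu\in\mathfrak{b}$ counts each ideal once per unit compatible with the congruence condition, so each class contribution carries a factor $1/w$ (such normalizing constants are visible in the paper as $P_1=(x^2-y^2)/2$ and the coefficients $c_{1},c_{2}$ of Lemmas 3.1 and 3.2); second, for general $\mu$ prime to $\Lambda$ one has $\phi((\mu))=\chi(\mu)\,\mu^{k-1}$ for a character $\chi$ of $(\OO_{K}/\Lambda)^{\times}$, not $\phi((\mu))=\mu^{k-1}$, and it is exactly this $\chi$ that assembles into $\omega_{\phi}$; your phrase ``refined by characteristics coming from the congruence modulo $\Lambda$'' gestures at this, but the nebentypus claim cannot be verified without writing it out.
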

We remark that function (\ref{eqn:ono}) is a normalized Hecke eigenform 
\cite{{Ahlgren},{Serre}}. 
%In this paper, we assume that function (\ref{eqn:ono}) is 
%normalized, i.e., $a(1)=1$. 
Moreover, if the class number of $K$ is $h$ then 
the character as given in (\ref{eqn:principal}) 
will have $h$ extensions to nonprincipal ideals. 
Namely, the function (\ref{eqn:ono}) has $h$ choices, 
so we denote by 
$\Psi_{K, \Lambda}^{(1)}(z), \ldots , \Psi_{K, \Lambda}^{(h)}(z)$ 
each functions (see \cite{Prime}). 
\begin{ex} $\\ $
\vspace{-20pt}
\begin{enumerate}
\item 
[(i)] $d=2$.\\ 
We calculate $\Psi_{K, \Lambda}(z)=\sum_{m\geq 1}a(m)q^{m}$, 
where $\Lambda=(1)$ and the weight of 
the Hecke character is $3$. We remark that $\vert \Cl_{K}\vert =1$. 
\begin{table}[thb]
\caption{Integral ideals of small norm of $d=2$ and $d=5$}
\label{Tab:d=2,d=5}
\begin{center}
%{\small
{\footnotesize
%{\scriptsize
\begin{tabular}{c||c} 
\noalign{\hrule height0.8pt}
\hline
$N(A)$ & $A$: ideal  \\ \hline
$1$& $(1)$ \\ \hline
$2$& $(\sqrt{-2})$ \\ \hline
$3$& $(-1+\sqrt{-2})$ \\ 
   & $(-1-\sqrt{-2})$ \\ \hline
$4$& $(2)$ \\ \hline
\noalign{\hrule height0.8pt}
\end{tabular}
\hspace{10pt}
\begin{tabular}{c||c} 
\noalign{\hrule height0.8pt}
\hline
$N(A)$ & $A$: ideal  \\ \hline
$1$& \MC{1}{c}{$(1)$} \\ \hline
$2$& \MC{1}{c}{$(2, 1+\sqrt{-5})$} \\ \hline
$3$& $(3, 1+\sqrt{-5})$ \\ 
   & $(3, 1-\sqrt{-5})$ \\ \hline
$4$& \MC{1}{c}{$(2)$} \\ \hline
$5$& \MC{1}{c}{$(\sqrt{-5})$} \\ \hline
$6$& $(1-\sqrt{-5})$ \\ 
   & $(-1-\sqrt{-5})$ \\ \hline
\noalign{\hrule height0.8pt}
\end{tabular}
}
\end{center}
\end{table}
By the definitions (\ref{eqn:principal}) and (\ref{eqn:ono}), 
we have $a(1)=1^2=1$, $a(2)=\sqrt{-2}^2=-2$, 
$a(3)=(-1+\sqrt{-2})^2+(-1-\sqrt{-2})^2=2$, $a(4)=2^2$, \ldots. 
Thus, we obtain 
\begin{eqnarray*}
\Psi_{K, \Lambda}^{(1)}(z) =q - 2 q^2 - 2 q^3 + 4 q^4 + 4 q^6 - 8 q^8 - 5 q^9  +\cdots .
\end{eqnarray*}

\item 
[{\rm (ii)}] $d=5$.\\ 
We calculate $\Psi_{K, \Lambda}(z)=\sum_{m\geq 1}a(m)q^{m}$, 
where $\Lambda=(1)$ and 
the weight of the Hecke character is $3$. 
We remark that $\vert \Cl_{K}\vert =2$. 
When $A$ of norm $m$ is a nonprincipal ideal, 
$A^{2}$ is a principal ideal, so, $\phi (A^{2})$ is computable 
by the definition (\ref{eqn:principal}). 
%Then, we set $\phi (A)^{2}=\phi (A^{2})$. 
For example, $\phi((2, 1+\sqrt{-5}))^{2}=\phi((2))=4$, 
so, we can assume that $\phi((2, 1+\sqrt{-5}))=2$, i.e., $a(2)=2$. Then, 
since $(2, 1+\sqrt{-5})(3, 1+\sqrt{-5})=(1-\sqrt{-5})$ and 
$(2, 1+\sqrt{-5})(3, 1-\sqrt{-5})=(-1-\sqrt{-5})$, we have 
$a(3)=((1+\sqrt{-5})^2+(1-\sqrt{-5})^2)/2=-4$, $a(4)=2^2$, \ldots. 
Thus, we obtain 
\begin{eqnarray*}
\Psi_{K, \Lambda}^{(1)}(z) =q+2 q^2-4 q^3+4 q^4-5 q^5-8 q^6+4 q^7+8 q^8+7 q^9+\cdots .
\end{eqnarray*}
On the other hand, we assume that $\phi((2, 1+\sqrt{-5}))=-2$, 
i.e., $a(2)=-2$. Then, we have 
\begin{eqnarray*}
\Psi_{K, \Lambda}^{(2)}(z) =q-2 q^2+4 q^3+4 q^4-5 q^5-8 q^6-4 q^7-8 q^8+7 q^9+\cdots .
\end{eqnarray*}
\end{enumerate}
\end{ex}

Here, we discuss the relationships between the Hecke character and the 
weighted theta series of the lattices $L_{\oo}$ and $L_\mathfrak{a}$. 
First, we quote the following theorem: 
\begin{thm}[cf.~{\cite[page~192]{Miyake}}]\label{thm:Miyake}
Let $L$ be an integral lattice with the Gram matrix $A$ and 
$N$ be the natural number such that the elements of $NA^{-1}$ 
are rational integers. 
Let the character $\chi (d)$ be 
\begin{eqnarray*}
\chi (d)=\Big(\frac{(-1)^{(r/2)}\det L}{d}\Big). 
\end{eqnarray*}
Then, for $P\in$ {\rm Harm}$_{2}(\RR^{2})$, 
\begin{enumerate}
\item 
[{\rm (1)}]
$\Theta_{L,P}\in M_{3}(\Gamma_0 (4N),\chi)$. 

\item 
[{\rm (2)}]
If all the diagonal elements of $A$ are even, 
then $\Theta_{L,P}\in M_{3}(\Gamma_0 (2N),\chi)$. 

\item 
[{\rm (3)}] 
If all the diagonal elements of $A$ and $NA^{-1}$ 
are even, then 
$\Theta_{L,P}\in M_{3}(\Gamma_0 (N),\chi)$. 
\end{enumerate}
\end{thm}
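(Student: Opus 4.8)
The plan is to obtain Theorem~\ref{thm:Miyake} from the general theta transformation formula: I would derive the behaviour of $\Theta_{L,P}$ under the generators of $\Gamma_0(N)$ by Poisson summation, and then read off the level and nebentypus from the Gram matrix $A$ and its inverse. That $\Theta_{L,P}$ is a modular form of weight $r/2+\deg P=3$ is already the well-known statement recalled before the theorem; the genuine content here is the identification of the precise level ($4N$, $2N$, or $N$) and of the character $\chi$, so I would concentrate the argument on those two points. Since each $\Gamma_0(M)$ is generated, together with $-I$, by the translation $\left(\begin{smallmatrix}1&1\\0&1\end{smallmatrix}\right)$ and by elements built from the inversion $\left(\begin{smallmatrix}0&-1\\1&0\end{smallmatrix}\right)$, it suffices to control $\Theta_{L,P}$ under these two operations and to verify holomorphy at every cusp.

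First I would treat the translation. Because $L$ is integral we have $(x,x)\in\mathbb{Z}$, so $\Theta_{L,P}$ is periodic; if moreover every diagonal entry $A_{ii}=(e_i,e_i)$ is even, then $L$ is an even lattice and the period improves. This behaviour at the cusp $\infty$ is what distinguishes case~(2) from case~(1). The essential input is the inversion formula: applying Poisson summation to $f(x)=P(x)e^{\pi i z(x,x)}$ and invoking Hecke's identity --- that a homogeneous harmonic polynomial of degree $\nu$ is, up to the scalar $(-i)^{\nu}$, an eigenfunction of the Gaussian Fourier transform --- yields
\begin{eqnarray*}
\Theta_{L,P}(-1/z)=c\,(\det A)^{-1/2}\,z^{\,r/2+\nu}\,\Theta_{L^{\sharp},P}(z),
\end{eqnarray*}
where $L^{\sharp}$ is the dual lattice (with Gram matrix $A^{-1}$) and $c$ is an explicit eighth root of unity. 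With $\nu=\deg P=2$ and $r=2$ the automorphy factor carries exactly the weight $3$, and the harmonicity of $P$ is precisely what guarantees that no lower-weight contribution survives the transform.

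To pass from this pair of relations to invariance under $\Gamma_0(N)$, I would rescale: since $NA^{-1}$ is integral, $NL^{\sharp}$ is an integral lattice commensurable with $L$, and combining the inversion formula with the Fricke involution $z\mapsto-1/(Nz)$ relates $\Theta_{L,P}$ back to itself. Composing the translation and inversion laws over the generators of $\Gamma_0(N)$ then produces the factor $(cz+d)^{3}$ together with a multiplier which, after a Gauss-sum evaluation and quadratic reciprocity, is identified with the Kronecker symbol $\chi(d)=\left(\frac{(-1)^{r/2}\det L}{d}\right)$. The evenness of the diagonals of $A$ and of $NA^{-1}$ governs whether the theta-multipliers at the cusps $\infty$ and $0$ are trivial, and this is exactly the mechanism lowering the level from $4N$ to $2N$ to $N$ in cases~(1), (2), (3). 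Holomorphy at every cusp is then automatic, since each $\gamma\in SL_2(\mathbb{Z})$ carries $\Theta_{L,P}$ to a finite combination of theta series of translates of sublattices of $L$, all with $q$-expansions supported on non-negative exponents and polynomially bounded coefficients.

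The main obstacle is the precise determination of the multiplier under inversion: the exact Gauss-sum computation that pins down the root of unity $c$ and its reduction to the quadratic character $\chi$, together with the careful bookkeeping of the evenness hypotheses that controls the level reduction. This is the technical heart of the Hecke--Schoeneberg theory, and it is where the sign $(-1)^{r/2}$ and the factor $\det L$ in the character genuinely enter; once the transformation laws under $\left(\begin{smallmatrix}0&-1\\1&0\end{smallmatrix}\right)$ and $\left(\begin{smallmatrix}1&1\\0&1\end{smallmatrix}\right)$ are in hand, everything else is formal.
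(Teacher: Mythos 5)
The first thing to note is that the paper does not prove this statement at all: Theorem~3.2 is imported from the literature (cf.\ Miyake, page~192; essentially his result on theta series with spherical coefficients) and is used as a black box to establish Lemmas~3.1 and 3.2. So there is no internal proof to compare against; your proposal has to be judged against the standard Hecke--Schoeneberg argument, which is indeed what you sketch: Poisson summation, Hecke's identity for harmonic polynomials as eigenfunctions of the Gaussian Fourier transform, and a Gauss-sum/reciprocity analysis of the multiplier. In outline this is the right, and essentially the only, approach.

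As a proof, however, the proposal has two genuine gaps. First, the reduction to generators is not sound as stated: the translation $T=\left(\begin{smallmatrix}1&1\\0&1\end{smallmatrix}\right)$ together with the inversion $S=\left(\begin{smallmatrix}0&-1\\1&0\end{smallmatrix}\right)$ (or with the Fricke involution $z\mapsto -1/(Nz)$) does not generate a group containing $\Gamma_0(N)$ for general $N$, so controlling $\Theta_{L,P}$ under those two maps alone does not produce the transformation law under every element of $\Gamma_0(4N)$, $\Gamma_0(2N)$, or $\Gamma_0(N)$. The classical proofs avoid this either by computing the multiplier for an arbitrary element $\left(\begin{smallmatrix}a&b\\c&d\end{smallmatrix}\right)\in\Gamma_0(4N)$ directly (Schoeneberg, Shimura, Miyake), or by working with the full vector of coset theta series attached to $L^{\sharp}/L$, which transforms under all of $SL_2(\Z)$ via the Weil representation, and then restricting to the subgroup fixing the relevant component; your sketch needs to commit to one of these routes, since tracking the two relations through arbitrary words in $S$ and $T$ forces you to introduce exactly those coset theta series anyway. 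Second, you explicitly defer ``the exact Gauss-sum computation that pins down the root of unity $c$ and its reduction to the quadratic character $\chi$, together with the careful bookkeeping of the evenness hypotheses.'' But that computation is the entire content of the theorem: weight-$3$ modularity for some congruence subgroup is, as you yourself note, the generic statement recalled earlier in the paper, and what Theorem~3.2 actually asserts is the precise level ($4N$ versus $2N$ versus $N$) and the precise character $\chi(d)=\left(\frac{(-1)^{r/2}\det L}{d}\right)$. An argument that stops short of that identification is a plan for a proof rather than a proof --- though, to be fair, the authors themselves cite rather than prove the result.
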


Then, we obtain the following lemmas: 
\begin{lem}\label{lem:ono}
Let $K$ be an imaginary quadratic field whose class number is $1$ and 
$L_{\oo}$ be the lattice corresponding to the principal ideal class $\oo$. 
Let $\phi$ be the Hecke character of weight $3$ 
with modulus $\Lambda$. 
Assume that $\Lambda =(1)$ and $P_1=(x^2-y^2)/2\in$ {\rm Harm}$_{2}(\R^{2})$. 
Then, $\Psi_{K, \Lambda}(q)=\Theta_{L_{\oo},P_1}(q)$. 
\end{lem}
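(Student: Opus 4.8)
The plan is to compute both $q$-expansions explicitly by identifying $\OO_K$ with the lattice $L_{\oo}$ under the standard embedding $\C\cong\R^2$, and then to check that the two power series agree term by term.

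First I would record why the weight-$3$ Hecke character value $\phi((\alpha))=\alpha^{k-1}=\alpha^{2}$ is well defined on principal ideals. Since $\Lambda=(1)$, the congruence condition in \eqref{eqn:principal} is vacuous, so $\phi((\alpha))=\alpha^{2}$ for every nonzero $\alpha\in\OO_K$; this is independent of the chosen generator precisely because any two generators differ by a unit $u$, and $(u\alpha)^{2}=\alpha^{2}$ exactly when $u^{2}=1$. The hypothesis $d\neq 1,3$ guarantees $\OO_K^{\times}=\{\pm 1\}$, which is what is needed (and is why $\Q(\sqrt{-1})$ and $\Q(\sqrt{-3})$ must be excluded). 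Because the class number is $1$, every integral ideal is principal and is prime to $\Lambda=(1)$, so the sum in \eqref{eqn:ono} runs over all principal ideals. Grouping the two generators $\{\pm\alpha\}$ of each ideal, and using $\alpha^{2}=(-\alpha)^{2}$, I obtain
\begin{eqnarray*}
\Psi_{K, \Lambda}(z)=\sum_{(\alpha)\neq(0)}\alpha^{2}q^{N(\alpha)}=\frac{1}{2}\sum_{\alpha\in\OO_K\setminus\{0\}}\alpha^{2}q^{N(\alpha)}.
\end{eqnarray*}

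Next I would translate this into lattice language. Identifying $\C$ with $\R^{2}$ via $\alpha=u+iv\mapsto(u,v)$, with $\sqrt{-d}$ corresponding to $i\sqrt{d}$, the image of $\OO_K$ is exactly the lattice $L_{\oo}$ of Theorem \ref{thm:lattice} (handling both residue cases $-d\equiv 2,3$ and $-d\equiv 1\pmod 4$ uniformly, without choosing explicit bases). Under this identification $N(\alpha)=|\alpha|^{2}=(x,x)$ for $x=(u,v)$, while $\alpha^{2}=(u^{2}-v^{2})+2iuv$. Substituting gives
\begin{eqnarray*}
\Psi_{K, \Lambda}(z)=\frac{1}{2}\sum_{x=(x_1,x_2)\in L_{\oo}}\bigl[(x_1^{2}-x_2^{2})+2ix_1x_2\bigr]q^{(x,x)}.
\end{eqnarray*}
The key observation is that $L_{\oo}$ is stable under complex conjugation $\alpha\mapsto\overline{\alpha}$, i.e. under the involution $(x_1,x_2)\mapsto(x_1,-x_2)$, which preserves $(x,x)$ but is odd on $x_1x_2$; hence the imaginary part $\sum x_1x_2\,q^{(x,x)}$ cancels in conjugate pairs and vanishes. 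Since $P_1=(x^2-y^2)/2$ is harmonic, I conclude
\begin{eqnarray*}
\Psi_{K, \Lambda}(z)=\frac{1}{2}\sum_{x\in L_{\oo}}(x_1^{2}-x_2^{2})\,q^{(x,x)}=\sum_{x\in L_{\oo}}P_1(x)\,q^{(x,x)}=\Theta_{L_{\oo},P_1}(z),
\end{eqnarray*}
which is the desired identity.

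I expect the only genuinely delicate point to be the well-definedness step: the factor $\tfrac12$ built into $P_1$ exactly compensates for the two unit generators $\pm\alpha$ of each principal ideal, and this bookkeeping works only because $\OO_K^{\times}=\{\pm1\}$. The norm-preserving embedding and the conjugation symmetry that kills the imaginary part are then routine, though one should be careful to state them through $\alpha\mapsto(\mathrm{Re}\,\alpha,\mathrm{Im}\,\alpha)$ so that the two cases of Theorem \ref{thm:lattice} are treated at once.
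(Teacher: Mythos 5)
Your proof is correct, and it takes a genuinely different route from the paper's. The paper's own proof of Lemma \ref{lem:ono} is indirect and computational: it invokes Theorems \ref{thm:ono} and \ref{thm:Miyake} to place $\Psi_{K,\Lambda}$ and $\Theta_{L_{\oo},P_1}$ in the same finite-dimensional space of weight-$3$ modular forms, then computes a basis of that space with Sage and checks the identity coefficient by coefficient, once for each of the seven fields. You instead identify the two $q$-expansions directly: with $\Lambda=(1)$ and class number $1$, every integral ideal is $(\alpha)$ with exactly two generators $\pm\alpha$ (this is where $\OO_K^{\times}=\{\pm1\}$, i.e. $d\neq 1,3$, enters), so $\Psi_{K,\Lambda}=\tfrac12\sum_{\alpha\in\OO_K\setminus\{0\}}\alpha^{2}q^{N(\alpha)}$; under $\alpha\mapsto(\mathrm{Re}\,\alpha,\mathrm{Im}\,\alpha)$ the norm becomes $(x,x)$, the real part of $\alpha^{2}$ becomes $x_1^{2}-x_2^{2}$, and the imaginary part cancels because $L_{\oo}$ is stable under $(x_1,x_2)\mapsto(x_1,-x_2)$. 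Your argument buys uniformity in $d$, needs no computer verification and no appeal to modularity, and explains conceptually why the normalization $P_1=(x^2-y^2)/2$ is forced (the $1/2$ absorbs the two unit generators); it also makes visible exactly where the excluded fields $\Q(\sqrt{-1})$ and $\Q(\sqrt{-3})$ fail. What the paper's method buys is that the identical procedure settles Lemma \ref{lem:ono2} as well, where your direct bookkeeping becomes delicate: on nonprincipal ideals $\phi(A)$ is only determined up to a choice via $\phi(A)^{2}=\phi(A^{2})$, and the constants $c_1,c_2$ of Table \ref{Tab:c_1,c_2} vary with $d$, so the authors' basis-plus-comparison computation handles both lemmas at once. (One cosmetic point: in your final display the sum over $x\in L_{\oo}$ includes the origin, which is harmless since $P_1(0)=0$, but it is worth saying.)
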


\begin{lem}\label{lem:ono2}
Let $K$ be an imaginary quadratic field whose class number is $2$ and 
$L_{\oo}$ $($resp. $L_{\mathfrak{a}}$$)$ 
be the lattice corresponding to the principal ideal class $\oo$ 
$($resp. nonprincipal ideal class $\mathfrak{a}$$)$. 
Let $\phi$ be the Hecke character of weight $3$ 
with modulus $\Lambda$. 
Assume that $\Lambda =(1)$ and $P_2=x^2-y^2\in$ {\rm Harm}$_{2}(\R^{2})$. 
Then, 
$\Psi_{K, \Lambda}(q)=c_{1}\Theta_{L_{\oo},P_2}(q)
+c_{2}\Theta_{L_{\mathfrak{a}},P_2}(q)$, 
where $c_{1}$ and $c_{2}$ are given as in table \ref{Tab:c_1,c_2}$.$\\ 
\begin{table}[thb]
\caption{Coefficients, $c_1$ and $c_2$}
\label{Tab:c_1,c_2}
\begin{center}
%{\small
{\footnotesize
%{\scriptsize
\begin{tabular}{c||lllllllll}
\noalign{\hrule height0.8pt}
\hline
$-d$               & \MC{1}{c}{$-5$} & \MC{1}{c}{$-6$} & \MC{1}{c}{$-10$} 
                   & \MC{1}{c}{$-13$} & \MC{1}{c}{$-15$} & \MC{1}{c}{$-22$}
                   & \MC{1}{c}{$-35$} & \MC{1}{c}{$-37$} & \MC{1}{c}{$-51$}\\
%                  & (Type~II) & (Type~II) \\
\hline
$c_{1}$ &  \MC{1}{c}{$1/2$} & \MC{1}{c}{$1/2$} &\MC{1}{c}{$1/2$}  
        &  \MC{1}{c}{$1/2$} & \MC{1}{c}{$1/2$} &\MC{1}{c}{$1/2$}  
        &  \MC{1}{c}{$1/2$} & \MC{1}{c}{$1/2$} &\MC{1}{c}{$1/2$}  \\
$c_{2}$ &  \MC{1}{c}{$1/2$} & \MC{1}{c}{$1/2$} &\MC{1}{c}{$1/2$}  
        &  \MC{1}{c}{$1/2$} & \MC{1}{c}{$2$} &\MC{1}{c}{$1/2$}  
        &  \MC{1}{c}{$3$} & \MC{1}{c}{$1/2$} &\MC{1}{c}{$1/2$}  \\
\hline
\hline
$-d$               & \MC{1}{c}{$-58$} & \MC{1}{c}{$-91$} & \MC{1}{c}{$-115$} 
                   & \MC{1}{c}{$-123$} & \MC{1}{c}{$-187$} & \MC{1}{c}{$-235$}
                   & \MC{1}{c}{$-267$} & \MC{1}{c}{$-403$} & \MC{1}{c}{$-427$}\\
%                  & (Type~II) & (self-dual)   \\
\hline
$c_{1}$ &  \MC{1}{c}{$1/2$} & \MC{1}{c}{$1/2$} &\MC{1}{c}{$1/2$}  
        &  \MC{1}{c}{$1/2$} & \MC{1}{c}{$1/2$} &\MC{1}{c}{$1/2$}  
        &  \MC{1}{c}{$1/2$} & \MC{1}{c}{$1/2$} &\MC{1}{c}{$1/2$}  \\
$c_{2}$ &  \MC{1}{c}{$1/2$} & \MC{1}{c}{$5/3$} &\MC{1}{c}{$1/2$}  
        &  \MC{1}{c}{$1/2$} & \MC{1}{c}{$7/3$} &\MC{1}{c}{$1/2$}  
        &  \MC{1}{c}{$1/2$} & \MC{1}{c}{$11/9$} &\MC{1}{c}{$1/2$}  \\
\hline
%\hline
%$m$ & \MC{1}{c}{$2$} & \MC{1}{c}{$3$} & \MC{1}{c}{$4$} & \MC{1}{c}{$6$} \\
%%                  & (Type~II) & (self-dual) & (Type~II) & (Type~II)   \\
%\hline
%$C_{12,32}^{(m)}$ & $d_E=8^*$ & $d=9$  & $d_E= 8$   & $d_E=24^*$ \\
%$C_{12,40}^{(m)}$ & $d_E=8^*$ & $d=9$  & $d_E=16^*$ & $d_E=24^*$ \\
%$C_{12,56}^{(m)}$ & $d_E=8$  & $d=12$ & $d_E=16$ &  $d_E=24$ \\
\noalign{\hrule height0.8pt}
   \end{tabular}
}
\end{center}
\end{table}
\hspace{-5pt}{\it Proof of Lemmas 3.1 and 3.2.}
First, we assume that the lattices are integral lattices, 
if not we multiple the Gram matrix of $L$ by $2$. 

{\rm Because of the Theorems \ref{thm:ono} and \ref{thm:Miyake}, 
$\Psi_{K, \Lambda}(q)$, $\Theta_{L_{\oo},P}(q)$ 
and $\Theta_{L_{\mathfrak{a}},P}(q)$ with $P=P_1$, $P_2$ are 
modular forms of the same group $\Gamma$. 
Therefore, we calculate the basis of the space of modular forms 
of group $\Gamma$ 
and check $\Psi_{K, \Lambda}(q)=\Theta_{L_{\oo},P_1}(q)$ and 
$\Psi_{K, \Lambda}(q)=c_{1}\Theta_{L_{\oo},P_2}(q)
+c_{2}\Theta_{L_{\mathfrak{a}},P_2}(q)$ explicitly 
(using ``Sage", Mathematics Software \cite{Sage}).} \e
\end{lem}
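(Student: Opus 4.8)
The plan is to use that, in each identity, both sides are weight-$3$ cusp forms for the same congruence group, and then to obtain equality either by a direct comparison of Fourier coefficients or, as a fallback, by a finite check. Lemma \ref{lem:ono} I would prove directly, coefficient by coefficient, so that no basis computation is needed. Fix $n\ge 1$. Under the identification of $\OO_K$ with $L_{\oo}\subset\R^2$ coming from Theorem \ref{thm:lattice}, an element $\alpha\in\OO_K$ maps to the point with coordinates $(\mathrm{Re}\,\alpha,\mathrm{Im}\,\alpha)$, so that $(x,x)=N(\alpha)$ and, crucially, $P_1(x)=\tfrac12(x_1^2-x_2^2)=\tfrac12\mathrm{Re}(\alpha^2)$; I would verify this last identity in both residue cases of $-d\bmod 4$. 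Hence the $q^n$-coefficient of $\Theta_{L_{\oo},P_1}$ is $\tfrac12\sum_{N(\alpha)=n}\mathrm{Re}(\alpha^2)$. On the Hecke side, since $\Lambda=(1)$ relation (\ref{eqn:principal}) forces $\phi((\alpha))=\alpha^{2}$, which is well defined precisely because the exclusion $d\neq 1,3$ makes the unit group $\{\pm1\}$ (equivalently $\sharp U_f=2$ by Theorem \ref{thm:Zagier}), so $(\pm\alpha)^2=\alpha^2$. As norm-$n$ ideals correspond to elements of norm $n$ modulo $\pm1$, the $q^n$-coefficient of $\Psi_{K,\Lambda}$ equals $\tfrac12\sum_{N(\alpha)=n}\alpha^2$. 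Finally $\{\alpha:N(\alpha)=n\}$ is stable under complex conjugation and $\overline{\alpha^2}=\overline\alpha^{\,2}$, so $\sum\alpha^2$ is real and equals $\sum\mathrm{Re}(\alpha^2)$; the two coefficients coincide, settling Lemma \ref{lem:ono} transparently.

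For Lemma \ref{lem:ono2} I would split $\Psi_{K,\Lambda}=\Psi_{\oo}+\Psi_{\mathfrak a}$ by ideal class. The principal part is handled exactly as above and, since $P_2=2P_1$, gives $\Psi_{\oo}=\tfrac12\Theta_{L_{\oo},P_2}$, which is why $c_1=\tfrac12$ uniformly. For the nonprincipal part I would fix a nonprincipal ideal $J$ and observe that a norm-$n$ point of $L_{\mathfrak a}$ corresponds to a generator $\gamma$ of an ideal $JA$ with $A\in\mathfrak a$ and $N(A)=n$, so $P_2$ at that point reads $\mathrm{Re}(\gamma^2)/N(J)=\mathrm{Re}(\phi(J)\phi(A))/N(J)$, whereas $\Psi_{\mathfrak a}$ collects $\sum_A\phi(A)$. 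Together with the reality forced by $\phi(\overline A)=\overline{\phi(A)}$, matching these should yield $\Psi_{\mathfrak a}=c_2\Theta_{L_{\mathfrak a},P_2}$.

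The hard part is this last step, and it is where I expect the genuine obstacle. Writing $J^2=(j)$ one has $\phi(J)^2=j^2$, and in general $j^2$, hence $\phi(J)$, is truly non-real (already for $d=15$), so $\phi(J)$ cannot be pulled through $\mathrm{Re}(\,\cdot\,)$ term by term and the naive guess $c_2=N(J)/2\phi(J)$ is false. The correct constant emerges only after the pairing $A\leftrightarrow\overline A$ is applied to the entire sum, and its value—the nonuniform entries $2,3,\tfrac53,\tfrac73,\tfrac{11}{9},\dots$ of Table \ref{Tab:c_1,c_2}—depends on $N(J)$ and on which of the two extensions of $\phi$ is used.

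For this reason I would also record the structural fallback the authors use. By Theorem \ref{thm:ono} the form $\Psi_{K,\Lambda}$ lies in $S_3(\Gamma_0(|d_K|),(\tfrac{-d_K}{\bullet})\omega_\phi)$, while by Theorem \ref{thm:Miyake} the cusp forms $\Theta_{L_{\oo},P}$ and $\Theta_{L_{\mathfrak a},P}$ lie in $S_3(\Gamma_0(N),\chi)$; after checking that the level $N$ and the character $\chi$ agree on both sides—the step most likely to hide arithmetic bookkeeping—each identity becomes an equality inside a single finite-dimensional space. There $c_1,c_2$ are pinned down by the first few coefficients (at $n=1$ only the principal class represents norm $1$, giving $c_1=\tfrac12$, and the smallest nonprincipal norm gives $c_2$), and equality of the full forms follows by comparing Fourier coefficients up to the Sturm bound $\tfrac{k}{12}[\mathrm{SL}_2(\Z):\Gamma_0(N)]$, the finite verification carried out in Sage.
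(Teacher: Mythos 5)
Your proposal is correct, and on the decisive point it coincides with the paper: the authors prove Lemmas \ref{lem:ono} and \ref{lem:ono2} together by invoking Theorems \ref{thm:ono} and \ref{thm:Miyake} to place $\Psi_{K,\Lambda}$, $\Theta_{L_{\oo},P_2}$ and $\Theta_{L_{\mathfrak{a}},P_2}$ in a single finite-dimensional space and then checking the identities coefficientwise in Sage, which is exactly your fallback (your Sturm-bound remark makes explicit why a finite check suffices, a point the paper leaves implicit). What you do genuinely differently is the direct, computer-free treatment of the principal part: the identities $P_1=\tfrac12\mathrm{Re}(\alpha^2)$ and $\phi((\alpha))=\alpha^{2}$ (well defined since the units are $\pm1$), plus conjugation-stability of $\{\alpha : N(\alpha)=n\}$, give clean proofs of Lemma \ref{lem:ono} and of $c_1=\tfrac12$, which the paper obtains only by machine verification. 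However, the ``genuine obstacle'' at which you stop on the nonprincipal side is in fact surmountable, and you had all the pieces in hand: fix $J\in\mathfrak{a}$ with $L_{\mathfrak{a}}=J/\sqrt{N(J)}$, write $J^{2}=(j)$ and $\phi(J)=\epsilon j$ with $\epsilon=\pm1$; for $A\in\mathfrak{a}$ of norm $n$ write $JA=(\gamma_A)$, so that $\phi(A)=\gamma_A^{2}/(\epsilon j)$ and $J\overline{A}=\bigl(j\overline{\gamma_A}/N(J)\bigr)$. Pairing $A$ with $\overline{A}$, using $|j|^{2}=N(J)^{2}$ and the elementary identity $\mathrm{Re}(w)+\mathrm{Re}(j^{2}\overline{w})/|j|^{2}=2\,\mathrm{Re}(j)\,\mathrm{Re}(w/j)$ with $w=\gamma_A^{2}$, each pair contributes $\tfrac{2}{\epsilon}\,\mathrm{Re}(\gamma_A^{2}/j)$ to the coefficient of $\Psi_{K,\Lambda}$ and $\tfrac{4\,\mathrm{Re}(j)}{N(J)}\,\mathrm{Re}(\gamma_A^{2}/j)$ to that of $\Theta_{L_{\mathfrak{a}},P_2}$ (self-conjugate $A$ give the same ratio), whence
\[
c_2=\frac{N(J)}{2\,\epsilon\,\mathrm{Re}(j)}.
\]
This reproduces every entry of Table \ref{Tab:c_1,c_2}: when $N(J)$ is a ramified prime one has $j\in\Z$ and $c_2=\tfrac12$; for $d=15$, $N(J)=2$ and $j=(1+\sqrt{-15})/2$ give $c_2=2$; for $d=91$, $N(J)=5$ and $j=(3+\sqrt{-91})/2$ give $c_2=5/3$; for $d=403$, $N(J)=11$ and $j=(9+\sqrt{-403})/2$ give $c_2=11/9$. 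So your plan is sound, but as written your proof of Lemma \ref{lem:ono2} still rests on the same computational verification the paper uses; pushing the conjugation pairing one step further would have eliminated the computer entirely and explained, rather than merely tabulated, the values of $c_2$.
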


%\begin{proof}
%Assume that $\vert \Cl_{K}\vert=1$. 
%The dimension of modular form of weight $3$ 
%for some $\Gamma$ is finite. 
%So, if $\Psi_{K, \Lambda}(q)-\Theta_{L_{1},P}(q)$ has zero 
%of higher order 
%then $\Psi_{K, \Lambda}(q)-\Theta_{L,P}(q)\equiv 0$. 
%We calculated the $\Psi_{K, \Lambda}(q)$ and 
%$\Theta_{L,P}(q)$ explicitly and checked this fact. 
%The case $\vert \Cl_{K}\vert=2$ is similar. 
%\end{proof}

\begin{cor}\label{cor:HeckeTheta}
Let the notation be the same as above. 
If $\vert \Cl_{K}\vert=1$ then $\Theta_{L_{1},P_1}(q)$ is 
a normalized Hecke eigenform. 
If $\vert \Cl_{K}\vert=2$ then 
$c_{1}\Theta_{L_{1},P_2}(q)+c_{2}\Theta_{L_{2},P_2}(q)$ is 
a normalized Hecke eigenform. 
\end{cor}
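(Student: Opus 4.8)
\textit{Proof proposal.} The corollary is engineered to fall out immediately once Lemmas~\ref{lem:ono} and~\ref{lem:ono2} are available, so the plan is simply to combine them with Theorem~\ref{thm:ono} and the remark following it. The guiding observation is that being a \emph{normalized Hecke eigenform} is a property of the Fourier expansion alone: it amounts to the normalization $a(1)=1$ together with the multiplicative relations~(\ref{eqn:mul}) and~(\ref{eqn:rec}) of Lemma~\ref{lem:lemrama} among the coefficients $a(m)$. Consequently, if two modular forms have identical $q$-expansions, one is a normalized Hecke eigenform exactly when the other is, and any equality of $q$-series transports the eigenform property verbatim.

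First I would invoke Theorem~\ref{thm:ono} together with the remark just after it: the Hecke-character series $\Psi_{K,\Lambda}(z)=\sum_{n\geq 1}a(n)q^{n}$ is a normalized Hecke eigenform. Normalization can be read off from~(\ref{eqn:principal}): taking $\alpha=1$ gives $a(1)=\phi(\OO_K)=1^{k-1}=1$, while the eigenform assertion is the content of the cited results on theta series attached to Hecke characters. Since we work with $\Lambda=(1)$ throughout, this places $\Psi_{K,\Lambda}$ in $S_{3}(\Gamma_{0}(|d_K|),(\tfrac{-d_K}{\bullet})\omega_{\phi})$.

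Next I would split into the two cases and apply the matching lemma, using the identifications $L_1=L_{\oo}$ and $L_2=L_{\mathfrak{a}}$. When $|\Cl_K|=1$, Lemma~\ref{lem:ono} gives the identity $\Theta_{L_{\oo},P_1}(q)=\Psi_{K,\Lambda}(q)$; since the right-hand side is a normalized Hecke eigenform, so is the left-hand side. When $|\Cl_K|=2$, Lemma~\ref{lem:ono2} gives $c_{1}\Theta_{L_{\oo},P_2}(q)+c_{2}\Theta_{L_{\mathfrak{a}},P_2}(q)=\Psi_{K,\Lambda}(q)$ with the tabulated constants $c_1,c_2$, and the same reasoning shows this particular linear combination is a normalized Hecke eigenform.

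There is essentially no obstacle remaining at this stage, since all the substantive work has already been discharged: the proof in the cited literature that CM theta series of Hecke characters are eigenforms, and the explicit identifications of Lemmas~\ref{lem:ono} and~\ref{lem:ono2} (the latter requiring the evaluation of $c_1,c_2$ against a computed basis of $M_3(\Gamma,\chi)$). The single point demanding care is conceptual rather than computational: the eigenform property is not preserved under arbitrary operations on theta series, but because the lemmas assert an \emph{equality of functions}---equivalently, of all Fourier coefficients---the property passes across unchanged. Thus the corollary reduces to bookkeeping built on the preceding results.
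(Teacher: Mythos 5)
Your proposal is correct and follows essentially the same route as the paper: the paper's own proof is just the one-line observation that $\Psi_{K,\Lambda}$ (the series of Theorem~\ref{thm:ono}) is a normalized Hecke eigenform by the cited results of Ahlgren and Serre, with the identities of Lemmas~\ref{lem:ono} and~\ref{lem:ono2} implicitly transporting that property to the (combinations of) weighted theta series. Your write-up merely makes explicit the normalization $a(1)=1$ and the transfer of the eigenform property across equality of $q$-expansions, which the paper leaves unsaid.
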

\begin{proof}
The function (\ref{eqn:ono}) is a normalized Hecke eigenform 
\cite{{Ahlgren},{Serre}}. 
\end{proof}

Finally, we give the following proposition, 
which is an analogue of Theorem \ref{thm:Leh} 
and the crucial part of the proof of Theorems \ref{thm:classnumber=1} 
and \ref{thm:classnumber=2}. 
\begin{prop}\label{prop:Lehmer}
Assume that $\sum_{m\geq 1}a(m)q^{m}$ is a normalized Hecke eigenform of 
$S_{3}(\Gamma, \chi)$ and the coefficients $a(m)$ are rational integers. 
Moreover Let $p$ be the prime such that $\chi (p)=1$. 
Let $\alpha_0$ be the least value of $\alpha$ for which $a{\rm (}p^{\alpha}{\rm )}=0$. If $a(p)\neq \pm p$ then $\alpha_0 =1$ if it is finite. 
\end{prop}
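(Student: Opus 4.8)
The plan is to specialize the explicit formula (\ref{eqn:Lehmer}) for Hecke eigenforms to weight $k=3$ and to combine it with the integrality of the coefficients. Since $\chi(p)=1$, formula (\ref{eqn:Lehmer}) applies, and with $k=3$ (so that $(k-1)/2=1$) it becomes $a(p^{\alpha}) = p^{\alpha}\sin((\alpha+1)\theta_p)/\sin\theta_p$, where $2\cos\theta_p = a(p)/p$. First I would invoke the strict bound (\ref{eqn:Deligne}), which gives $|a(p)| < 2p$, hence $|\cos\theta_p| < 1$ and $\sin\theta_p \neq 0$; thus $\theta_p$ is a genuine angle in $(0,\pi)$ and $a(p^{\alpha}) = 0$ holds exactly when $(\alpha+1)\theta_p \in \pi\Z$. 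In particular $\alpha_0$ is finite if and only if $\theta_p$ is a rational multiple of $\pi$, and in that case $\alpha_0 + 1$ is the denominator of $\theta_p/\pi$ written in lowest terms; the assertion $\alpha_0 = 1$ is therefore equivalent to $\theta_p = \pi/2$, i.e. to $a(p) = 0$.

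Next I would bring in the hypothesis that the $a(m)$ are rational integers. This makes $2\cos\theta_p = a(p)/p$ rational, so whenever $\alpha_0$ is finite one has simultaneously $\theta_p/\pi \in \Q$ and $2\cos\theta_p \in \Q$. The crux of the argument is then a classical rationality result (Niven's theorem): the only rational values of $2\cos(r\pi)$ with $r \in \Q$ are $0, \pm 1, \pm 2$. Applying it gives $a(p)/p \in \{0, \pm 1, \pm 2\}$, that is, $a(p) \in \{0, \pm p, \pm 2p\}$.

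It then remains to eliminate the extraneous cases. The values $a(p) = \pm 2p$ contradict the strict inequality (\ref{eqn:Deligne}), and the values $a(p) = \pm p$ are excluded by hypothesis, leaving only $a(p) = 0$; this forces $\theta_p = \pi/2$ and hence $\alpha_0 = 1$, as claimed. I expect the main obstacle to be identifying the correct number-theoretic tool for the middle step: one must recognize that the \emph{simultaneous} rationality of $\theta_p/\pi$ and of $\cos\theta_p$ pins $a(p)/p$ down to finitely many values. This is precisely what Niven's theorem supplies, and it streamlines the more intricate reasoning underlying the original Theorem \ref{thm:Leh}.
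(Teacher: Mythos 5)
Your proposal is correct and follows essentially the same route as the paper: both specialize the sine formula (\ref{eqn:Lehmer}) to weight $3$, deduce from $a(p^{\alpha_0})=0$ that $\theta_p$ is a rational multiple of $\pi$, use the rationality of $2\cos\theta_p=a(p)/p$ to force $a(p)\in\{0,\pm p,\pm 2p\}$, and then eliminate $\pm 2p$ by the strict bound (\ref{eqn:Deligne}) and $\pm p$ by hypothesis. The only cosmetic difference is that you cite Niven's theorem as a black box for the middle step, whereas the paper (following Lehmer) proves precisely that fact inline --- twice the cosine of a rational multiple of $\pi$ is an algebraic integer, and a rational algebraic integer is a rational integer --- and phrases the argument as a contradiction rather than directly.
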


\begin{proof}
%First, we remark that the weight of $\Theta_{L, P}$ is $3$. 
Assume the contrary, that is, $\alpha_0 > 1$, 
so that $a(p)\neq 0$. By the equation (\ref{eqn:Lehmer}), 
\begin{eqnarray*}
a(p^{\alpha_0})=0=p^{\alpha_0}\frac{\sin (\alpha_{0}+1)\theta _{p}}{\sin \theta _{p}}. 
\end{eqnarray*}
This shows that $\theta _{p}$ is a real number of the form
$\theta _{p}=\pi k / (1+\alpha_0)$, where $k$ is an integer. Now the number 
\begin{eqnarray}
z=2 \cos\theta _{p}=a(p) p^{-1}, \label{lem:1}
\end{eqnarray}
being twice the cosine of a rational multiple of $2 \pi$, is an algebraic integer. On the other hand $z$ is a root of the equation 
\begin{eqnarray}
pz-a(p)=0.  \label{lem:2}
\end{eqnarray}
Hence $z$ is a rational integer. By (\ref{eqn:Deligne}) and (\ref{lem:1}), we have $\vert z\vert \leq 1$. Therefore $z=\pm 1$ and the equation (\ref{lem:2}) becomes $a(p)= \pm p$. By assumption, this is a contradiction. 
\end{proof}

\section{The case of $\vert \Cl_{K}\vert =1$}

Let $K:=\Q(\sqrt{-d})$ be an imaginary quadratic field. 
If the class number of $K$ 
is $1$ then $d$ is in the following set $\{1$, 
$2$, $3$, $7$, $11$, $19$, $43$, $67$, $163\}$. 
In particular, 
we only consider the cases where $d$ is in the set: 
$\{$$2$, $7$, $11$, $19$, $43$, $67$, $163\}$ 
since the cases $d=1$ and $d=3$ are considered in \cite{Toy-BM}. 

In this section, we assume that $a(m)$ and $b(m)$ 
are the coefficients of the following functions: 
\begin{eqnarray*}
\Theta_{L_{\oo}}(q)=\sum_{m\geq 0}a(m)q^{m},\ 
\Theta_{L_{\oo},P_1}(q)=\sum_{m\geq 1}b(m)q^{m}, 
\end{eqnarray*}
where $P_1=(x^2-y^2)/2 \in {\rm Harm}_{2}(\R^{2})$. 

\begin{lem}\label{lem:num}
Let $d$ be one of the elements in $\{2$, $7$, $11$, $19$, $43$, $67$, $163$\}. 
We set $a^{\prime}(m)=a(m)/2$ for all $m$. Then, 
\begin{eqnarray*}
a^{\prime}(p^{e})=\left\{
\begin{array}{lll}
e+1 &{ if}\ \left(d_{K}/p\right)=1, \\
(1+(-1)^e)/2 &{ if}\ \left(d_{K}/p\right)=-1, \\
1 &{ if}\ p\ \vert\ d_{K}. 
\end{array} 
\right.
\end{eqnarray*}
\end{lem}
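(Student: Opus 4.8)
The plan is to identify the halved theta coefficients $a'(m)$ with the ideal-counting function $F(m)$ of Proposition \ref{prop:NUM}, after which the three cases of the lemma become a direct restatement of that proposition. First I would unwind the classical theta series $\Theta_{L_{\oo}}(q)=\sum_{m\geq 0}a(m)q^{m}$ by means of the vector--ideal correspondence recorded in (\ref{eqn:multi}). Every $d$ in the list satisfies $d_K<-4$ (indeed $d_K\in\{-8,-7,-11,-19,-43,-67,-163\}$), so Theorem \ref{thm:Zagier} gives $\sharp U_f=2$; equivalently, the only units of $\OO_K$ are $\pm1$, and each principal integral ideal of norm $m$ arises from exactly two lattice vectors $\pm\alpha$. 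Hence (\ref{eqn:multi}) reads
\begin{eqnarray*}
\Theta_{L_{\oo}}(q)=1+2\sum_{m=1}^{\infty}\sharp\{A\mid A\mbox{ is an integral ideal of }\oo,\,N(A)=m\}\,q^{m},
\end{eqnarray*}
so that $a(m)=2\,\sharp\{A\in\oo\mid N(A)=m\}$ for all $m\geq 1$.

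Next I would invoke that the class number is $1$: every integral ideal of $K$ then lies in the principal class $\oo$, whence $\sharp\{A\in\oo\mid N(A)=m\}=F(m)$, the total number of integral ideals of norm $m$. Combining with the previous step gives $a'(m)=a(m)/2=F(m)$ for every $m\geq 1$. Specializing to $m=p^{e}$ and quoting Proposition \ref{prop:NUM} then yields the three stated cases verbatim whenever $p$ is odd.

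The only point requiring a little care is the prime $p=2$, where Proposition \ref{prop:NUM} is phrased through congruences of $d_K$ modulo $8$ rather than through a quadratic symbol. Here I would observe that for a fundamental discriminant one has $(d_K/2)=1$ exactly when $d_K\equiv 1\pmod 8$, and $(d_K/2)=-1$ exactly when $d_K\equiv 5\pmod 8$ (these being the only odd residues available, since $d_K\equiv 1\pmod 4$), while $(d_K/2)=0$ exactly when $2\mid d_K$. Under this reading the $p=2$ clauses of Proposition \ref{prop:NUM} coincide with the three displayed cases of the lemma. Among the listed $d$ this distinction is genuinely felt only for $d=7$, where $d_K=-7\equiv 1\pmod 8$.

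I do not expect a genuine obstacle: the whole argument is a bookkeeping reduction to Proposition \ref{prop:NUM}. The substantive input has already been isolated in the first step, namely the value $\sharp U_f=2$ and the factor $2$ it contributes, which is precisely what makes $a'(m)=F(m)$; everything else is the translation just described. If anything, the most error-prone part is the $p=2$ reconciliation of the Kronecker symbol with the mod-$8$ conditions, and I would state that identification explicitly to keep the case analysis airtight.
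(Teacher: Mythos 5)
Your proposal is correct and follows the same route as the paper: the paper's own proof simply notes that by equation (\ref{eqn:multi}) the quantity $a'(m)=a(m)/2$ counts the integral ideals of norm $m$ (all principal, since the class number is $1$), and then invokes Proposition \ref{prop:NUM}. Your write-up fills in the details the paper leaves implicit --- the value $\sharp U_f=2$ from Theorem \ref{thm:Zagier} since $d_K<-4$, and the reconciliation of the Kronecker symbol $(d_K/2)$ with the mod-$8$ conditions at $p=2$ --- but the substance is identical.
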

\begin{proof}
Because of the equation (\ref{eqn:multi}), 
$a^{\prime}(m)$ is the number of integral ideals of $K$ of norm $m$. 
Therefore, it can be proved by Proposition \ref{prop:NUM}. 
\end{proof}

\begin{lem}\label{lem:non0}
Let $p$ be a prime number such that $(d_{K}/p)=1$. 
Then, $b(p)\neq 0$. Moreover, if $p\neq d$ then $b(p)\neq \pm p$. 
\end{lem}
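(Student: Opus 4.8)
The plan is to make $b(p)$ completely explicit via the Hecke character and then reduce both assertions to the impossibility of certain Diophantine equations. By Lemma~\ref{lem:ono}, $\Theta_{L_{\oo},P_1}(q)=\Psi_{K,\Lambda}(q)=\sum_{A}\phi(A)q^{N(A)}$ with $\Lambda=(1)$ and $\phi$ the weight-$3$ Hecke character, so that $b(m)=\sum_{N(A)=m}\phi(A)$. Since $(d_K/p)=1$, Theorem~\ref{thm:facprime} gives a splitting $(p)=P\overline{P}$ with $P\neq\overline{P}$ and $N(P)=N(\overline{P})=p$, and these are the only integral ideals of norm $p$. As the class number is $1$, $P=(\pi)$ is principal, and since $d\neq 1,3$ the units are just $\pm 1$, so $\phi((\pi))=\pi^{2}$ is well defined. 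Hence $b(p)=\phi(P)+\phi(\overline{P})=\pi^{2}+\overline{\pi}^{2}=2\,\mathrm{Re}(\pi^{2})$, where $\pi\overline{\pi}=p$.

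Writing $\pi=x+y\sqrt{-d}$ with $x,y\in\tfrac12\Z$ (integers when $-d\equiv2,3\pmod4$, half-integers of equal parity when $-d\equiv1\pmod4$), I would use the uniform formulas $\mathrm{Re}(\pi^{2})=x^{2}-dy^{2}$ and $N(\pi)=x^{2}+dy^{2}=p$. For the first claim, $b(p)=0$ forces $x^{2}=dy^{2}$; as $d$ is square-free and $d>1$, $\sqrt d$ is irrational, so this has only the solution $x=y=0$, giving $p=0$, a contradiction. Therefore $b(p)\neq0$.

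For the second claim, I would combine $b(p)=2(x^{2}-dy^{2})=\pm p$ with $p=x^{2}+dy^{2}$ to obtain $x^{2}=3dy^{2}$ (for the sign $+$) or $3x^{2}=dy^{2}$ (for the sign $-$). A short valuation argument shows that, for square-free $d$, either equation has a nontrivial solution exactly when $3d$, respectively $d/3$, is a perfect square, i.e.\ only when $d=3$; since $d\neq3$, no nontrivial solution exists, and the trivial one again yields $p=0$. Equivalently, $b(p)=\pm p$ together with $\pi\overline{\pi}=p$ forces $\pi^{2}=p\zeta$ for some non-real root of unity $\zeta\in\Q(\sqrt{-3})$, whence $\pi^{2}\in\Q(\sqrt{-d})\cap\Q(\sqrt{-3})=\Q$, contradicting that $\pi^{2}$ is non-real. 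The hypothesis $p\neq d$ only serves to discard the ramified prime $p=d$, where $(p)=P^{2}$ and indeed $b(p)=\pi^{2}=-p$; under $(d_K/p)=1$ that prime is already excluded, so the two hypotheses are consistent.

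The computation is short once the identity $b(p)=2\,\mathrm{Re}(\pi^{2})$ is established, so the only genuine point requiring care---and the main (though mild) obstacle---is the arithmetic: keeping track of the two normalizations of $\OO_{K}$ modulo $4$, the well-definedness of $\phi$ (which is precisely where $d\neq1,3$ is used), and the verification that square-freeness of $d$ together with $d\neq3$ rules out every relevant Diophantine equation.
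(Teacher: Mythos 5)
Your proof is correct and follows essentially the same route as the paper: both identify $b(p)$ with the Hecke-character sum $\pi^{2}+\overline{\pi}^{2}=2(x^{2}-dy^{2})$ subject to $p=x^{2}+dy^{2}$, and then rule out $b(p)=0$ and $b(p)=\pm p$ via the Diophantine equations $x^{2}=dy^{2}$, $x^{2}=3dy^{2}$, $3x^{2}=dy^{2}$, which have no nontrivial solutions for square-free $d\neq 1,3$. The only cosmetic differences are that you treat the two normalizations of $\OO_{K}$ uniformly with half-integer coordinates (adding the well-definedness of $\phi$ and a root-of-unity variant), whereas the paper splits into the cases $-d\equiv 1\pmod 4$ (with the two shapes of generators) and $d=2$.
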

\begin{proof}
We remark that by Corollary \ref{cor:HeckeTheta}, 
$\Theta _{L_{\oo},P_1}(q)=\Psi_{K,\Lambda}(q)$. 
So, the numbers $b(m)$ are the coefficients of $\Psi_{K,\Lambda}(q)$. 

First, we assume that $d\neq 2$ i.e.,\ $-d\equiv 1 \pmod 4$ and 
$\OO_{K}=\ZZ+\ZZ(1+\sqrt{-d})/2$. 
If $N((a+b\sqrt{-d}))$ is equal to $p$ then by Lemma \ref{lem:Takagi} 
\begin{eqnarray*}
p=a^{2}+db^{2}. 
\end{eqnarray*}
Because of the definition of $\Psi_{K,\Lambda}(q)$, 
\begin{eqnarray*}
b(p)= (a+b\sqrt{-d})^2 + 
(a-b\sqrt{-d})^2=2(a^{2}-db^{2}). 
\end{eqnarray*}
If $b(p)= 0$ then $a^{2}=db^{2}$. This is a contradiction. 
Assume that $b(p)= \pm p$. Then, 
\begin{eqnarray*}
2(a^{2}-db^{2})=\pm (a^{2}+db^{2}), 
\end{eqnarray*}
that is, $a^{2}=3db^{2}$ or $3a^{2}=db^{2}$. 
This is a contradiction.

If $N(((a+b\sqrt{-d})/2))$ is equal to $p$ then by Lemma \ref{lem:Takagi} 
\begin{eqnarray*}
\frac{a^{2}+db^{2}}{4}=p. 
\end{eqnarray*}
Because of the definition of $\Psi_{K,\Lambda}(q)$, 
\begin{eqnarray*}
b(p)= \Big(\frac{a+b\sqrt{-d}}{2}\Big)^2 + 
\Big(\frac{a-b\sqrt{-d}}{2}\Big)^2=\frac{a^{2}-db^{2}}{2}. 
\end{eqnarray*}
If $b(p)= 0$ then $a^{2}=db^{2}$. This is a contradiction. 
Assume that $b(p)= \pm p$. Then, 
\begin{eqnarray*}
\frac{a^{2}-db^{2}}{2}=\pm \frac{a^{2}+db^{2}}{4}, 
\end{eqnarray*}
that is, $a^{2}=3db^{2}$ or $3a^{2}=db^{2}$. 
This is a contradiction. 

Next, we assume that $d= 2$ i.e., $-d\equiv 2 \pmod 4$ and 
$\OO_{K}=\ZZ+\ZZ\sqrt{-2}$. 
If $N((a+b\sqrt{-2}))$ is equal to $p$ then by Lemma \ref{lem:Takagi} 
\begin{eqnarray*}
p=a^{2}+2b^{2}. 
\end{eqnarray*}
Because of the definition of $\Psi_{K,\Lambda}(q)$, 
\begin{eqnarray*}
b(p)= (a+b\sqrt{-2})^2 + (a-b\sqrt{-2})^2=2(a^{2}-2b^{2}). 
\end{eqnarray*}
If $b(p)= 0$ then $a^{2}=2b^{2}$. This is a contradiction. 
Assume that $b(p)= \pm p$. Then, 
\begin{eqnarray*}
2(a^{2}-2b^{2})=\pm (a^{2}+2b^{2}), 
\end{eqnarray*}
that is, $a^{2}=6b^{2}$ or $3a^{2}=2b^{2}$. 
This is a contradiction. 
\end{proof}
\hspace{-17pt}{\bf\it Proof of Theorem \ref{thm:classnumber=1}. }
We will show that $b(m)\neq 0$ when $(L_{\oo})_{m}\neq \emptyset$. 

By Theorem \ref{thm:ono}, $\Theta_{L_{\oo},P_1}$ is a normalized Hecke 
eigenform. So, We assume that $m$ is a power of prime, 
if not we could apply the equation (\ref{eqn:mul}).
We will divide our considerations into the following three cases. 
\begin{enumerate}
\item 
[(i)]
Case $m=p^{\alpha}$ and $p\ \vert\ d_{K}$: \\
By $a(m)=2$ and the inequality (\ref{ine:Fisher}), 
the shells $(L_{\oo})_{m}$ are not spherical $2$-designs. 
Hence, $b(m)\neq 0$. 

\item 
[(ii)] Case $m=p^{\alpha}$ and $\left(d_{K}/p\right)=-1$: \\
By Lemma \ref{lem:num}, 
\begin{eqnarray*}
a(p^{n})= 
\left\{
\begin{array}{llll}
0  \quad &{\rm if\ }n\ {\rm is\ odd}, \\
2  \quad &{\rm if\ }n\ {\rm is\ even}. 
\end{array} 
\right.
\end{eqnarray*}
By $a(m)=2$ and the inequality (\ref{ine:Fisher}), when $n$ is even, 
the shells $(L_{\oo})_{m}$ are not spherical $2$-designs. 
Hence, $b(m)\neq 0$. 

\item 
[(iii)] Case $m=p^{\alpha}$ and $\left(d_{K}/p\right)=1$: \\
By Proposition \ref{prop:Lehmer} and Lemma \ref{lem:non0}, 
we have $b(m)\neq 0$. 
This completes the proof of Theorem \ref{thm:classnumber=1}. \e
\end{enumerate}
\section{The case of $\vert \Cl_{K}\vert =2$}

Let $K:=\Q(\sqrt{-d})$ be an imaginary quadratic field. 
In this section, we assume that the class number of $K$ is $2$. 
So, we consider that $d$ is in the following set: 
$\{$$5$, $6$, $10$, $13$, $15$, 
$22$, $35$, $37$, $51$, $58$, $91$, $115$, $123$, $187$, $235$, 
$267$, $403$, $427\}$. 
We denote by 
$\oo$ (resp. $\mathfrak{a}$) the principal 
(resp. nonprincipal) ideal class. 

In this section, we also assume that $a(m)$ and $b(m)$ 
are the coefficients of the following functions: 
\begin{eqnarray*}
&&\Theta_{L_{\oo}}(q)+\Theta_{L_{\mathfrak{a}}}(q)=\sum_{m\geq 0}a(m)q^{m},\\ 
&&c_{1}\Theta_{L_{\oo},P_2}(q)+c_{2}\Theta_{L_{\mathfrak{a}},P_2}(q)=
\sum_{m\geq 1}b(m)q^{m}, 
\end{eqnarray*}
where $c_1$ and $c_2$ are defined in Lemma \ref{lem:ono2}. 

\begin{lem}\label{lem:coprime}
Set $l_{1}:=\{N(O)\ \vert\ x\in L_{\oo}\}$ and 
$l_{2}:=\{N(A)\ \vert\ A\in \mathfrak{a}\}$. 
Then, $l_{1}\cap l_{2}=\emptyset$. 
Therefore, the set $L_{\oo} \cap L_\mathfrak{a}$ 
consists of the origin. 
\end{lem}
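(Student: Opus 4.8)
The plan is to recast the statement purely in terms of quadratic forms and then invoke genus theory. By the correspondence between ideal classes and reduced forms recalled in the Introduction together with equation (\ref{eqn:multi}), the set $l_1$ consists exactly of the norms of the nonzero elements of $L_{\oo}$, equivalently the positive integers represented by the principal reduced form $f_{\oo}$, while $l_2$ consists of the integers represented by the nonprincipal form $f_{\mathfrak{a}}$. Thus $l_1\cap l_2=\emptyset$ is equivalent to the assertion that $f_{\oo}$ and $f_{\mathfrak{a}}$ represent disjoint sets of positive integers. Granting this, the final sentence is immediate: a nonzero $x\in L_{\oo}\cap L_{\mathfrak{a}}$ would have $(x,x)\in l_1\cap l_2$, a contradiction, so $L_{\oo}\cap L_{\mathfrak{a}}=\{0\}$.

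The conceptual input is that $f_{\oo}$ and $f_{\mathfrak{a}}$ lie in distinct genera. Since $|\Cl_{K}|=2$, the subgroup of squares $\Cl_{K}^{2}$ is trivial, so the genus group $\Cl_{K}/\Cl_{K}^{2}\cong\Cl_{K}$ has order $2$; hence each of the two ideal classes forms its own genus, and in particular $\oo$ and $\mathfrak{a}$ are separated by a genus character $\chi$ (see \cite{Cox}). I would then use the classical representation criterion: if $n$ is coprime to $D=d_{K}$ and is represented by a primitive form of discriminant $D$, the genus of that form is determined by $n$ alone, the genus characters being Kronecker symbols evaluated at $n$. Consequently such an $n$ is represented by forms of at most one genus, which settles the disjointness for all $n$ coprime to $D$.

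The remaining obstacle is the integers $n$ sharing a factor with $D$, which I would handle by reducing to the coprime case using the ramification described in Theorem \ref{thm:facprime}. Suppose $n\in l_1\cap l_2$, realized by a principal integral ideal $I$ and a nonprincipal integral ideal $J$ of norm $n$. For each prime $p\mid d_{K}$ one has $(p)=P^{2}$ with $N(P)=p$ and $P$ the unique prime of norm $p$; hence the part of $I$ (resp. $J$) supported on ramified primes is forced to equal $M:=\prod_{p\mid\gcd(n,D)}P_p^{v_p(n)}$, the same for both. Writing $I=M\,I_U$ and $J=M\,J_U$ with $I_U,J_U$ integral, supported on unramified primes, and of the common norm $n'=n/N(M)$ coprime to $D$, I obtain $[I_U]=[M]^{-1}$ and $[J_U]=\mathfrak{a}[M]^{-1}$, the two distinct elements of $\Cl_{K}$, which therefore lie in distinct genera. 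Thus $n'$, coprime to $D$, would be represented by forms of both genera, contradicting the previous paragraph.

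The step I expect to require the most care is this ramified-prime reduction: verifying that the ramified part of an integral ideal of norm $n$ is uniquely determined (so that $I$ and $J$ genuinely share it) and that factoring it out preserves the genus separation between the two residual classes. The coprime-case criterion itself is standard and citable from \cite{Cox}, and the genus count $\Cl_{K}/\Cl_{K}^{2}\cong\Cl_{K}$ is immediate from $|\Cl_{K}|=2$, so the real work lies in the unramified reduction.
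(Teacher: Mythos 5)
Your proof is correct, but it takes a genuinely different route from the paper's. The paper never mentions genera: using Theorem \ref{thm:facprime} it sorts the rational primes by whether the prime ideals above them are principal or nonprincipal (for a split prime $(p)=P\overline{P}$ both factors lie in the same class, since $[\overline{P}]=[P]^{-1}=[P]$ when $|\Cl_{K}|=2$), and then observes that the class of any integral ideal of norm $n$ equals $\mathfrak{a}^{k}$, where $k$ counts, with multiplicity, the prime factors of $n$ lying under nonprincipal primes. Hence $n$ alone determines the class of every integral ideal of norm $n$, which gives $l_{1}\cap l_{2}=\emptyset$ by an elementary parity argument that treats split, inert and ramified primes uniformly. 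You instead invoke genus theory: since $\Cl_{K}^{2}$ is trivial, each class is its own genus, and the genus characters determine the genus of any form representing an integer coprime to $D$; the case $\gcd(n,D)>1$ is then reduced to the coprime case by splitting off the ramified part $M$. Your reduction is sound --- for $p\mid d_{K}$ the exponent of the unique prime $P_{p}$ of norm $p$ in any integral ideal of norm $n$ is forced to equal $v_{p}(n)$, so $I=MI_{U}$, $J=MJ_{U}$ with $[I_{U}]\neq [J_{U}]$ and $n'=n/N(M)$ coprime to $D$ --- so there is no gap. What your route buys is a citable classical criterion (Cox) whose coprime-to-$D$ part is valid for arbitrary class number, and it makes transparent why the method stops at $h=2$: for $h\geq 3$ distinct classes can share a genus, so characters no longer separate them, consistent with the difficulties the paper describes in Section 6. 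What the paper's route buys is brevity and self-containedness: nothing beyond the factorization of ideals and $|\Cl_{K}|=2$ is used.
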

\begin{proof}
Let $p$ be the prime number such that $(d_{K}/p)=1$. 
Then there exist prime ideals $P$ and $P^{\prime}$ 
such that $(p)=PP^{\prime}$ and $N(P)=N(P^{\prime})=p$. 
Since the class number is $2$, we have 
$P$ and $P^{\prime}\in \oo$ or 
$P$ and $P^{\prime}\in \mathfrak{a}$. 
If $P$ and $P^{\prime}\in \oo$ we denote by $p_{i}$ such a prime. 
If $P$ and $P^{\prime}\in \mathfrak{a}$ 
we denote by $p^{\prime}_{i}$ such a prime. 

Let $p$ be the prime number such that $(d_{K}/p)=-1$. 
Then $(p)$ is the prime ideal and $N((p))=p^{2}$. 
We denote by $q_{i}$ such a prime. 

Let $p$ be the prime number such that $p\ \vert\ d_{K}$. 
Then there exists a prime ideal $P$ 
such that $(p)=P^{2}$ and $N(P)=p$. 
Since the class number is $2$, we have 
$P \in \oo$ or 
$P \in \mathfrak{a}$. 
If $P \in \oo$ we denote by $r_{i}$ such a prime. 
If $P \in \mathfrak{a}$ we denote by $r^{\prime}_{i}$ such a prime. 

We take the element $n \in l_{1}\cap l_{2}$ and 
perform a prime factorization, 
$n=p_{1}\cdots p^{\prime}_{1}\cdots q_{1}\cdots r_{1}\cdots 
r^{\prime}_{1}\cdots$. 
Then, $p_{1}\cdots$, $q_{1}\cdots$ and $r_{1}\cdots$ 
correspond to principal ideals. 
So, if the number of the primes $p^{\prime}$ and $r^{\prime}$ 
is even then $n\in l_{1}$ and 
if the number of the primes $p^{\prime}$ and $r^{\prime}$ 
is odd then $n\in l_{2}$. 
This completes the proof of Lemma \ref{lem:coprime}. 
\end{proof}

\begin{lem}\label{lem:num2}
Let $d$ be one of the elements in $\{$$5$, $6$, $10$, $13$, $15$, 
$22$, $35$, $37$, $51$, $58$, $91$, $115$, $123$, $187$, $235$, 
$267$, $403$, $427$$\}$. 
We set $a^{\prime}(m)=a(m)/2$ for all $m$. Then, 
\begin{eqnarray*}
a^{\prime}(p^{e})=\left\{
\begin{array}{lll}
e+1 &{ if}\ \left(d_{K}/p\right)=1, \\
(1+(-1)^e)/2 &{ if}\ \left(d_{K}/p\right)=-1, \\
1 &{ if}\ p\ \vert\ d_{K}. 
\end{array} 
\right.
\end{eqnarray*}
\end{lem}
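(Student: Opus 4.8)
The plan is to follow the proof of Lemma \ref{lem:num} almost verbatim; the one new ingredient is that adding the theta series of the two ideal classes recovers the count of \emph{all} integral ideals of $K$.

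First I would apply equation (\ref{eqn:multi}) to each of the two lattices $L_{\oo}$ and $L_{\mathfrak{a}}$. Every $d$ in the list satisfies $d_{K}<-4$, so Theorem \ref{thm:Zagier} gives $\sharp U_{f}=2$ in both cases, and hence
\begin{eqnarray*}
\Theta_{L_{\oo}}(q)&=&1+2\sum_{n\geq1}\sharp\{A\in\oo\mid N(A)=n\}\,q^{n},\\
\Theta_{L_{\mathfrak{a}}}(q)&=&1+2\sum_{n\geq1}\sharp\{A\in\mathfrak{a}\mid N(A)=n\}\,q^{n}.
\end{eqnarray*}
Since $\vert\Cl_{K}\vert=2$, we have $\Cl_{K}=\{\oo,\mathfrak{a}\}$, so every integral ideal of $K$ lies in exactly one of these two classes. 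Adding the two expansions, the coefficient of $q^{n}$ for $n\geq1$ is therefore
\begin{eqnarray*}
a(n)=2\big(\sharp\{A\in\oo\mid N(A)=n\}+\sharp\{A\in\mathfrak{a}\mid N(A)=n\}\big)=2F(n),
\end{eqnarray*}
where $F(n)$ denotes the total number of integral ideals of norm $n$. Thus $a'(n)=a(n)/2=F(n)$ for all $n\geq1$.

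Finally, specializing to $n=p^{e}$ and invoking Proposition \ref{prop:NUM} (which treats the odd primes and $p=2$ uniformly through the Kronecker symbol $\left(d_{K}/p\right)$) yields precisely the three-case formula in the statement. I do not expect a genuine obstacle here: the partition of the integral ideals into the classes $\oo$ and $\mathfrak{a}$ is automatically exhaustive and disjoint, so the whole argument is a direct transcription of the class-number-one computation. The only point worth double-checking is the value $\sharp U_{f}=2$, i.e.\ that all the listed discriminants satisfy $d_{K}<-4$, which is immediate since the smallest case $d=5$ already gives $d_{K}=-20$.
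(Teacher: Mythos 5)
Your proof is correct and follows the same route as the paper: the paper's own proof simply cites equation (\ref{eqn:multi}) to identify $a'(m)$ with the number of integral ideals of norm $m$ and then invokes Proposition \ref{prop:NUM}. You have merely made explicit the details the paper leaves implicit (that $\sharp U_f=2$ since $d_K<-4$ for all listed $d$, and that summing over the two ideal classes counts all integral ideals), so there is nothing to correct.
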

\begin{proof}
Because of the equation (\ref{eqn:multi}), 
$a^{\prime}(m)$ is the number of integral ideals of $K$ of norm $m$. 
Therefore, it can be proved by Proposition \ref{prop:NUM}. 
\end{proof}

\begin{lem}\label{lem:non1}
Let $p$ be a prime number such that $(d_{K}/p)=1$. Then, $b(p)\neq 0$. 
Moreover, if $p\neq d$ then $b(p)\neq \pm p$. 
\end{lem}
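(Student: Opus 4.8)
The plan is to show, exactly as in the class-number-one Lemma~\ref{lem:non0}, that $b(p)$ is the sum of the Hecke-character values on the two prime ideals above $p$, and then to evaluate that sum by producing an honest generator. By Corollary~\ref{cor:HeckeTheta} and Lemma~\ref{lem:ono2}, the numbers $b(m)$ are the Fourier coefficients of the normalized Hecke eigenform $\Psi_{K,\Lambda}(q)=\sum_{A}\phi(A)q^{N(A)}$ with $\Lambda=(1)$, where $\phi$ is the weight-$3$ Hecke character normalized by $\phi((\alpha))=\alpha^{2}$ as in (\ref{eqn:principal}). Since $(d_{K}/p)=1$, Theorem~\ref{thm:facprime} gives $(p)=P\overline{P}$ with $N(P)=N(\overline{P})=p$ and $P\neq\overline{P}$, and these are the only integral ideals of norm $p$; hence $b(p)=\phi(P)+\phi(\overline{P})$ and $\phi(P)\phi(\overline{P})=\phi((p))=p^{2}$.

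First I would split into two cases according to Lemma~\ref{lem:coprime}. If $P$ and $\overline{P}$ are principal, then $p\in l_{1}$ and one can write $P=(\alpha)$ with a generator $\alpha$ as in Lemma~\ref{lem:Takagi}; then $\phi(P)=\alpha^{2}$, $\phi(\overline{P})=\overline{\alpha}^{2}$, and the evaluation of $b(p)=\alpha^{2}+\overline{\alpha}^{2}$ is word-for-word that of Lemma~\ref{lem:non0}, yielding $b(p)=2(a^{2}-db^{2})$ (resp.\ $(a^{2}-db^{2})/2$ in the $-d\equiv1\pmod 4$ quarter-norm subcase) with $p=a^{2}+db^{2}$ (resp.\ $4p=a^{2}+db^{2}$). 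The vanishing $b(p)=0$ forces $a^{2}=db^{2}$, while $b(p)=\pm p$ forces $a^{2}=3db^{2}$ or $3a^{2}=db^{2}$; since $d>1$ is square-free and $d\neq3$, each equality is impossible by an elementary $\gcd$ reduction.

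The genuinely new case is when $P$ and $\overline{P}$ are nonprincipal, so $p\in l_{2}$. Here I would exploit $\vert\Cl_{K}\vert=2$: the square $P^{2}$ is principal, say $P^{2}=(\beta)$ with $N(\beta)=p^{2}$, so that $\phi(P)^{2}=\phi((\beta))=\beta^{2}$ and hence $\phi(P)=\pm\beta$; likewise $\phi(\overline{P})=\pm\overline{\beta}$. The two signs must coincide, because their product is $\phi(P)\phi(\overline{P})=\pm\beta\overline{\beta}=\pm p^{2}$ and this must equal $p^{2}>0$; therefore $b(p)=\pm(\beta+\overline{\beta})=\pm 2a$, where $\beta=a+b\sqrt{-d}$ (resp.\ $(a+b\sqrt{-d})/2$) with $a^{2}+db^{2}=p^{2}$ (resp.\ $a^{2}+db^{2}=4p^{2}$). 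As a sanity check, for $d=5$ and $p=3$ one finds $\beta=2+\sqrt{-5}$ and $b(3)=\pm4$, matching the Example. Then $b(p)=0$ forces $a=0$, i.e.\ $db^{2}=p^{2}$ (resp.\ $4p^{2}$), making $d$ a perfect square, which is impossible; and $b(p)=\pm p$ forces $3a^{2}=db^{2}$ (resp.\ $db^{2}=3p^{2}$), again impossible for square-free $d\neq3$ by the same reduction.

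I expect the main obstacle to be the nonprincipal case: one must justify that $P^{2}$ is principal with an explicitly writable generator $\beta$ of norm $p^{2}$, pin down that the ambiguous signs in $\phi(P)=\pm\beta$ and $\phi(\overline{P})=\pm\overline{\beta}$ agree—this is precisely where the relation $\phi(P)\phi(\overline{P})=p^{2}$ is essential, and also where the choice of extension of $\phi$ to $\mathfrak{a}$ (i.e.\ $\Psi^{(1)}$ versus $\Psi^{(2)}$) becomes irrelevant, since it only flips the global sign of $b(p)$—and then carry both congruence subcases $-d\equiv2,3$ and $-d\equiv1\pmod4$ through the same square-free arithmetic. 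The auxiliary restriction $p\neq d$ serves only to keep the ramified prime out of consideration and plays no role beyond that.
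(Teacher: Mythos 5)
Your proof is correct, and in the decisive (nonprincipal) case it runs along a genuinely different route from the paper's. The paper, like you, reduces the principal case to Lemma~\ref{lem:non0}; but for a nonprincipal prime $J$ above $p$ it principalizes $J$ by multiplying with a fixed ramified nonprincipal prime ideal $I$ of norm $m\mid d_K$ (the values $m$ and $b(m)=\pm m$ being tabulated field-by-field in Table~\ref{Tab:min L_a}), writes a generator of $JI$ via Lemma~\ref{lem:Takagi}, computes $b(mp)=2(a^2-db^2)$ (resp.\ $(a^2-db^2)/2$) from the character, and then extracts $b(p)$ through the Hecke multiplicativity $b(mp)=b(m)b(p)$ of the normalized eigenform. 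You instead principalize by squaring: $P^2=(\beta)$ with $N(\beta)=p^2$, so $\phi(P)^2=\phi(P^2)=\beta^2$ gives $\phi(P)=\pm\beta$, and the relation $\phi(P)\phi(\overline{P})=\phi((p))=p^2=\beta\overline{\beta}$ forces the two signs to agree, whence $b(p)=\pm(\beta+\overline{\beta})$; the same square-free arithmetic then rules out $b(p)=0$ and $b(p)=\pm p$. What each approach buys: the paper's stays with rational-integer identities and tabulated data, but depends on Table~\ref{Tab:min L_a} (existence of a ramified nonprincipal prime and the value $b(m)$, which is itself obtained by the very squaring trick you use) and on the Hecke relation (\ref{eqn:mul}); yours is uniform over all eighteen fields, needs no table and nothing beyond the homomorphism property (\ref{eqn:principal}) of $\phi$, and it makes transparent why the choice between $\Psi^{(1)}_{K,\Lambda}$ and $\Psi^{(2)}_{K,\Lambda}$ only flips a global sign, which the sign-symmetric conclusion $b(p)\neq 0,\pm p$ tolerates. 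One small slip to fix: when $-d\equiv 1\pmod 4$ and $\beta=(a+b\sqrt{-d})/2$, one has $\beta+\overline{\beta}=a$, not $2a$; your subsequent equations ($a^2=p^2$, hence $db^2=3p^2$) already use the correct value, so only the displayed intermediate formula needs adjusting.
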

\begin{proof}
We remark that by Corollary \ref{cor:HeckeTheta}, 
$c_{1}\Theta _{L_{\oo},P_2}(q)+c_{2}\Theta _{L_{\mathfrak{a}},P_2}(q)
=\Psi_{K,\Lambda}(q)$. 
So, the numbers $b(m)$ are the coefficients of $\Psi_{K,\Lambda}(q)$. 

We set $N(J)=p$. 
When $J$ is a principal ideal, it can be proved 
by the similar method in Lemma \ref{lem:non0}. 
So, we assume that $J$ is nonprincipal. 

%We list the minimum norm $m$ of $L_{\mathfrak{a}}$ 
%and value $b(m)$ in Table \ref{Tab:min L_a}. 

We list the smallest prime number $m$ such that $m\ \vert\ d_K$ 
and $m\in \{N(I)\mid I\in \mathfrak{a}\}$, 
and the values $b(m)$ are in Table \ref{Tab:min L_a}. 
\begin{table}[thb]
\caption{Values of $m$ and $b(m)$}
\label{Tab:min L_a}
\begin{center}
%{\small
{\footnotesize
%{\scriptsize
\begin{tabular}{c||lllllllll}
\noalign{\hrule height0.8pt}\hline
$-d$               & \MC{1}{c}{$-5$} & \MC{1}{c}{$-6$} & \MC{1}{c}{$-10$} 
                   & \MC{1}{c}{$-13$} & \MC{1}{c}{$-15$} & \MC{1}{c}{$-22$}
                   & \MC{1}{c}{$-35$} & \MC{1}{c}{$-37$} & \MC{1}{c}{$-51$}\\
%                  & (Type~II) & (Type~II) \\
\hline
$m$       & \MC{1}{c}{$2$} & \MC{1}{c}{$2$} &\MC{1}{c}{$2$}  
          & \MC{1}{c}{$2$} & \MC{1}{c}{$3$} &\MC{1}{c}{$2$} 
          & \MC{1}{c}{$5$} & \MC{1}{c}{$2$} &\MC{1}{c}{$3$}  \\
\hline
$b(m)$       & \MC{1}{c}{$2$} & \MC{1}{c}{$2$} &\MC{1}{c}{$2$}  
          & \MC{1}{c}{$2$} & \MC{1}{c}{$-3$} &\MC{1}{c}{$2$} 
          & \MC{1}{c}{$-5$} & \MC{1}{c}{$2$} &\MC{1}{c}{$3$}  \\
\hline
\hline
$-d$               & \MC{1}{c}{$-58$} & \MC{1}{c}{$-91$} & \MC{1}{c}{$-115$} 
                   & \MC{1}{c}{$-123$} & \MC{1}{c}{$-187$} & \MC{1}{c}{$-235$}
                   & \MC{1}{c}{$-267$} & \MC{1}{c}{$-403$} & \MC{1}{c}{$-427$}\\
%                  & (Type~II) & (self-dual)   \\
\hline
$m$       & \MC{1}{c}{$2$} & \MC{1}{c}{$7$} &\MC{1}{c}{$5$}  
          & \MC{1}{c}{$3$} & \MC{1}{c}{$11$} &\MC{1}{c}{$5$} 
          & \MC{1}{c}{$3$} & \MC{1}{c}{$13$} &\MC{1}{c}{$7$}  \\
\hline
$b(m)$       & \MC{1}{c}{$2$} & \MC{1}{c}{$-7$} &\MC{1}{c}{$-5$}  
          & \MC{1}{c}{$3$} & \MC{1}{c}{$-11$} &\MC{1}{c}{$5$} 
          & \MC{1}{c}{$3$} & \MC{1}{c}{$-13$} &\MC{1}{c}{$7$}  \\
\hline
%$m$ & \MC{1}{c}{$2$} & \MC{1}{c}{$3$} & \MC{1}{c}{$4$} & \MC{1}{c}{$6$} \\
%%                  & (Type~II) & (self-dual) & (Type~II) & (Type~II)   \\
%\hline
%$C_{12,32}^{(m)}$ & $d_E=8^*$ & $d=9$  & $d_E= 8$   & $d_E=24^*$ \\
%$C_{12,40}^{(m)}$ & $d_E=8^*$ & $d=9$  & $d_E=16^*$ & $d_E=24^*$ \\
%$C_{12,56}^{(m)}$ & $d_E=8$  & $d=12$ & $d_E=16$ &  $d_E=24$ \\
\noalign{\hrule height0.8pt}
   \end{tabular}
}
\end{center}
\end{table}
First, we assume that $-d\equiv 2$ or $3 \pmod 4$. 
If $N(J)$ is equal to $p$ then by Lemma \ref{lem:Takagi} 
\begin{eqnarray*}
mp=a^{2}+db^{2}. 
\end{eqnarray*}
Because of the definition of $\Psi_{K,\Lambda}(q)$, 
\begin{eqnarray*}
b(mp)= (a+b\sqrt{-d})^2 + (a-b\sqrt{-d})^2=2(a^{2}-db^{2}). 
\end{eqnarray*}
Since $b(mp)=b(m)b(p)$ and the value of $b(m)$ 
in Table \ref{Tab:min L_a}, we have $b(p)=a^{2}-db^{2}$. 
If $b(p)= 0$ then $a^{2}=db^{2}$. This is a contradiction. 
Assume that $b(p)= \pm p$. Then, 
\begin{eqnarray*}
a^{2}-db^{2}=\pm \frac{a^{2}+db^{2}}{2}, 
\end{eqnarray*}
that is, $a^{2}=3db^{2}$ or $3a^{2}=db^{2}$. 
This is a contradiction. 

%We show only the case $-d=5$, 
%the other cases are similar. 
Next, we assume that $-d\equiv 1 \pmod 4$. 
If $N(J)$ is equal to $p$ then by Lemma \ref{lem:Takagi} 
there exist $a$, $b\in\ZZ$ such that 
\begin{eqnarray*}
mp=a^{2}+db^{2}\quad or \quad mp=\frac{a^{2}+db^{2}}{4}. 
\end{eqnarray*}
Because of the definition of $\Psi_{K,\Lambda}(q)$, 
\begin{eqnarray*}
b(mp)= (a+b\sqrt{-d})^2 + (a-b\sqrt{-d})^2=2(a^{2}-db^{2}). 
\end{eqnarray*}
or 
\begin{eqnarray*}
b(mp)= \Big(\frac{a+b\sqrt{-d}}{2}\Big)^2 + 
\Big(\frac{a-b\sqrt{-d}}{2}\Big)^2=\frac{a^{2}-db^{2}}{2}. 
\end{eqnarray*}
Since $b(mp)=b(m)b(p)$ and the value of $b(m)$ 
in Table \ref{Tab:min L_a}, we have $b(p)=2/b(m)\times(a^{2}-db^{2})$ or 
$b(p)=1/b(m)\times(a^{2}-db^{2})/2$. 
If $b(p)= 0$ then $a^{2}=db^{2}$. This is a contradiction. 
Assume that $b(p)= \pm p$. Then, 
\begin{eqnarray*}
\frac{2(a^{2}-db^{2})}{b(m)}=\pm \frac{a^{2}+db^{2}}{m}, 
\end{eqnarray*}
or 
\begin{eqnarray*}
\frac{a^{2}-db^{2}}{2b(m)}=\pm \frac{a^{2}+db^{2}}{4m}, 
\end{eqnarray*}
that is, $a^{2}=3db^{2}$ or $3a^{2}=db^{2}$ 
since $m=\pm b(m)$ for $-d\equiv 1\pmod 4$. 
This is a contradiction. 
\end{proof}
\hspace{-17pt}{\it Proof of Theorem \ref{thm:classnumber=2}. }
Because of Lemma \ref{lem:coprime}, 
it is enough to show that $b(m)\neq 0$ when $(L_{\oo})_{m}\neq \emptyset$ 
or $(L_{\mathfrak{a}})_{m}\neq \emptyset$. 

By Theorem \ref{thm:ono}, 
$c_{1}\Theta _{L_{\oo},P_2}+c_{2}\Theta _{L_{\mathfrak{a}},P_2}$ is a normalized Hecke 
eigenform. So, We assume that $m$ is a power of prime, 
if not we could apply the equation (\ref{eqn:mul}).
We will divide into the three cases. 
\begin{enumerate}
\item 
[(i)] Case $m=p^{\alpha}$ and $p\ \vert\ d_{K}$: \\
By $a(m)=2$ and (\ref{ine:Fisher}), 
the shells $(L)_{m}$ are not spherical $2$-designs. 
Hence, $b(m)\neq 0$. 

\item 
[(ii)] Case $m=p^{\alpha}$ and $\left(d_{K}/p\right)=-1$: \\
By Lemma (\ref{lem:num}), 
\begin{eqnarray*}
a(p^{n})= 
\left\{
\begin{array}{llll}
0  \quad &{\rm if\ }n\ {\rm is\ odd}, \\
2  \quad &{\rm if\ }n\ {\rm is\ even}. 
\end{array} 
\right.
\end{eqnarray*}
By $a(m)=2$ and (\ref{ine:Fisher}), when $n$ is even, 
the shells $(L)_{m}$ are not spherical $2$-designs. 
Hence, $b(m)\neq 0$. 

\item 
[(iii)] Case $m=p^{\alpha}$ and $\left(d_{K}/p\right)=1$: \\
By Proposition \ref{prop:Lehmer} and Lemma \ref{lem:non1}, 
$b(m)\neq 0$. 
This completes the proof of Theorem \ref{thm:classnumber=2}. \e
\end{enumerate}
\section{The case of $\vert \Cl_{K}\vert =3$}
In the previous sections, we studied the cases of class 
number $h=\vert \Cl_{K}\vert $ is either $1$ or $2$. 
However, it seems that the situation is somewhat different 
for the cases of class numbers $h \geq 3$. 
In this section, we discuss briefly how it is different, 
by considering the case of $d=23$ $(h=3)$. 

We first remark that one reason of success for the cases $h=1$ and $h=2$ 
is that the coefficients $a(m)$ of the 
Hecke eigenform $\Psi_{K, \Lambda}$ are all integers. 
Therefore, by the formula (\ref{lem:2})
$z=a(p)/p$ is a rational number (and since it is an algebraic 
integer), and so it must be a rational integer. 
It seems that this situation is no more true in general 
for the cases of $h\geq 3$. 
We will give more details information, 
concentrating the special (and typical) case of $d=23$. 

We denoted by $\oo$, $\mathfrak{a}_{1}$ and $\mathfrak{a}_{2}$ 
the ideal classes. 
The corresponding quadratic forms are $x^2+xy+6y^2$, 
$2x^2-xy+3y^2$ and 
$2x^2+xy+3y^2$, namely, 
$L_{\oo}=\langle (1,0), (1/2, \sqrt{23}/2)\rangle$, 
$L_{\mathfrak{a}_{1}}=\langle (2,0), (1/2, \sqrt{23}/2)\rangle$ and 
$L_{\mathfrak{a}_{2}}=\langle (2,0), (-1/2, \sqrt{23}/2)\rangle$, respectively. 
We give the weighed theta series of those ideal lattices. 
We set $P_{1}=x^2-y^2$ and $P_{2}=xy$ in this section. \\ \\
{\footnotesize $\Theta_{L_{\oo}}=1+2 q+2 q^4+4 q^6+4 q^8+2 q^9+4 q^{12}+2 q^{16}+4 q^{18}+2 q^{23}+4 q^{24}+2 q^{25}+4 q^{26}+4 q^{27}+4 q^{32}+6 q^{36}+4 q^{39}+8 q^{48}+2 q^{49}+4 q^{52}+4 q^{54}+4 q^{58}+4 q^{59}+4 q^{62}+6 q^{64}+8 q^{72}+4 q^{78}+2 q^{81}+4 q^{82}+4 q^{87}+2 q^{92}+4 q^{93}+4 q^{94}+8 q^{96}+2 q^{100}+O[q]^{101}$\\ \\
$\frac{1}{2}\times\Theta_{L_{\oo}, P_{1}}=q+4 q^4-11 q^6-7 q^8+9 q^9+q^{12}+16 q^{16}+13 q^{18}-23 q^{23}-44 q^{24}+25 q^{25}+29 q^{26}-38 q^{27}-28 q^{32}+85 q^{36}-14 q^{39}+77 q^{48}+49 q^{49}-103 q^{52}-99 q^{54}-91 q^{58}+26 q^{59}+101 q^{62}-15 q^{64}-11 q^{72}+133 q^{78}+81 q^{81}-43 q^{82}+82 q^{87}-92 q^{92}-182 q^{93}-19 q^{94}-7 q^{96}+100 q^{100}+O[q]^{101}$\\ \\
$\Theta_{L_{\oo}, P_{2}}=0$\\ \\
$\Theta_{L_{\mathfrak{a}_{1}}}=1+2 q^2+2 q^3+2 q^4+2 q^6+2 q^8+2 q^9+4 q^{12}+2 q^{13}+4 q^{16}+4 q^{18}+6 q^{24}+2 q^{26}+2 q^{27}+2 q^{29}+2 q^{31}+4 q^{32}+6 q^{36}+2 q^{39}+2 q^{41}+2 q^{46}+2 q^{47}+6 q^{48}+2 q^{50}+4 q^{52}+6 q^{54}+2 q^{58}+2 q^{62}+4 q^{64}+2 q^{69}+2 q^{71}+8 q^{72}+2 q^{73}+2 q^{75}+6 q^{78}+4 q^{81}+2 q^{82}+2 q^{87}+2 q^{92}+2 q^{93}+2 q^{94}+8 q^{96}+2 q^{98}+2 q^{100}+O[q]^{101}$\\ \\
$2\times\Theta_{L_{\mathfrak{a}_{1}}, P_{1}}=8 q^2-11 q^3-7 q^4+q^6+32 q^8+13 q^9-88 q^{12}+29 q^{13}-56 q^{16}+121 q^{18}+81 q^{24}-103 q^{26}-99 q^{27}-91 q^{29}+101 q^{31}+49 q^{32}+41 q^{36}+133 q^{39}-43 q^{41}-184 q^{46}-19 q^{47}-183 q^{48}+200 q^{50}+232 q^{52}-295 q^{54}+209 q^{58}+41 q^{62}-224 q^{64}+253 q^{69}+77 q^{71}+393 q^{72}-283 q^{73}-275 q^{75}-375 q^{78}+418 q^{81}-247 q^{82}-227 q^{87}+161 q^{92}-203 q^{93}+353 q^{94}+616 q^{96}+392 q^{98}-175 q^{100}+O[q]^{101}$\\ \\
$\frac{4}{\sqrt {23}}\times\Theta_{L_{\mathfrak{a}_{1}}, P_{2}}=q^3-3 q^4+5 q^6-7 q^9+9 q^{13}-11 q^{18}+13 q^{24}-3 q^{26}+9 q^{27}-15 q^{29}-15 q^{31}+21 q^{32}-27 q^{36}+17 q^{39}+33 q^{41}-39 q^{47}-19 q^{48}+45 q^{54}+21 q^{58}-51 q^{62}-23 q^{69}+57 q^{71}+5 q^{72}-15 q^{73}+25 q^{75}-35 q^{78}-38 q^{81}+45 q^{82}-55 q^{87}+69 q^{92}+65 q^{93}-27 q^{94}-75 q^{100}+O[q]^{101}$\\ \\
$\Theta_{L_{\mathfrak{a}_{2}}}=1+2 q^2+2 q^3+2 q^4+2 q^6+2 q^8+2 q^9+4 q^{12}+2 q^{13}+4 q^{16}+4 q^{18}+6 q^{24}+2 q^{26}+2 q^{27}+2 q^{29}+2 q^{31}+4 q^{32}+6 q^{36}+2 q^{39}+2 q^{41}+2 q^{46}+2 q^{47}+6 q^{48}+2 q^{50}+4 q^{52}+6 q^{54}+2 q^{58}+2 q^{62}+4 q^{64}+2 q^{69}+2 q^{71}+8 q^{72}+2 q^{73}+2 q^{75}+6 q^{78}+4 q^{81}+2 q^{82}+2 q^{87}+2 q^{92}+2 q^{93}+2 q^{94}+8 q^{96}+2 q^{98}+2 q^{100}+O[q]^{101}$\\ \\
$2\times\Theta_{L_{\mathfrak{a}_{2}}, P_{1}}=8 q^2-11 q^3-7 q^4+q^6+32 q^8+13 q^9-88 q^{12}+29 q^{13}-56 q^{16}+121 q^{18}+81 q^{24}-103 q^{26}-99 q^{27}-91 q^{29}+101 q^{31}+49 q^{32}+41 q^{36}+133 q^{39}-43 q^{41}-184 q^{46}-19 q^{47}-183 q^{48}+200 q^{50}+232 q^{52}-295 q^{54}+209 q^{58}+41 q^{62}-224 q^{64}+253 q^{69}+77 q^{71}+393 q^{72}-283 q^{73}-275 q^{75}-375 q^{78}+418 q^{81}-247 q^{82}-227 q^{87}+161 q^{92}-203 q^{93}+353 q^{94}+616 q^{96}+392 q^{98}-175 q^{100}+O[q]^{101}$\\ \\
$\frac{4}{\sqrt {23}}\times\Theta_{L_{\mathfrak{a}_{2}}, P_{2}}=-q^3+3 q^4-5 q^6+7 q^9-9 q^{13}+11 q^{18}-13 q^{24}+3 q^{26}-9 q^{27}+15 q^{29}+15 q^{31}-21 q^{32}+27 q^{36}-17 q^{39}-33 q^{41}+39 q^{47}+19 q^{48}-45 q^{54}-21 q^{58}+51 q^{62}+23 q^{69}-57 q^{71}-5 q^{72}+15 q^{73}-25 q^{75}+35 q^{78}+38 q^{81}-45 q^{82}+55 q^{87}-69 q^{92}-65 q^{93}+27 q^{94}+75 q^{100}+O[q]^{101}$\\
}

We calculate the Hecke character of weight $3$ and modulus $(1)$, 
i.e, we
calculate $\Psi_{K, \Lambda}=\sum_{m\geq 1}a(m)q^{m}$, 
where $\Lambda =(1)$ and $k = 3$. 
When $A$ of norm $m$ is a nonprincipal ideal, 
$A^3$ is a principal ideal. 
Then we set $\phi(A)^3 = \phi(A^3)$. 
For example, 
$(2, -1/2 +\sqrt{-23}/2)^3=(-3/2 -\sqrt{-23}/2)$. 
Because of 
\[
\phi \Big(\Big(\frac{-3 -\sqrt{-23}}{2}\Big)\Big)=
\Big(\frac{-3 -\sqrt{-23}}{2}\Big)^2=
\frac{-7+3\sqrt{-23}}{2}, 
\]
$\phi ((2, -1/2 +\sqrt{-23}/2))$ is 
one of the roots of 
\begin{eqnarray}\label{eqn:sec6}
x^3-\Big(\frac{-7+3\sqrt{-23}}{2}\Big)=0. 
\end{eqnarray}
We denote by $\alpha_1$, $\alpha_2$ and $\alpha_3$ 
the roots of equation (\ref{eqn:sec6}), namely, 
$\alpha_1\sim -1.86272+0.728188i$, $\alpha_2\sim 0.300733-1.97726i$ 
and $\alpha_3\sim 1.56199+1.24907i$, respectively. 
%Hence, $\phi ((2, -1/2 +\sqrt{-23}/2))\sim -1.86272+0.728188i$, $0.300733-1.977%26i$ 
%or $1.56199+1.24907i$. 
Then, $\phi ((2, -1/2 +\sqrt{-23}/2))$ is one of $\alpha_1$, $\alpha_2$ 
or $\alpha_3$. 
(Actually there
are three different Hecke characters in this case.) 
First let us set $\phi ((2, -1/2 +\sqrt{-23}/2))= \alpha_1$. 
By the equation $(2, -1/2 +\sqrt{-23}/2)\times(2, 1/2 +\sqrt{-23}/2)=(2)$, 
\[
\phi \Big(\Big(2, \frac{-1 +\sqrt{-23}}{2}\Big)\Big)\times
\phi \Big(\Big(2, \frac{1 +\sqrt{-23}}{2}\Big)\Big)=\phi((2)). 
\]
We get 
\[
\alpha_1 \times
\phi \Big(\Big(2, \frac{1 +\sqrt{-23}}{2}\Big)\Big)=4, 
\]
hence, $\phi ((2, 1/2 +\sqrt{-23}/2))= 4/\alpha_1$. 
%hence, $\phi ((2, 1/2 +\sqrt{-23}/2))\sim -1.86272 - 0.728188 i$. 
So, 
\[
a(2)=\phi \Big(\Big(2, \frac{-1 +\sqrt{-23}}{2}\Big)\Big)+
\phi \Big(\Big(2, \frac{1 +\sqrt{-23}}{2}\Big)\Big)=\alpha_1+4/\alpha_1. 
\]
%\[
%a(2)=\phi \Big(\Big(2, \frac{-1 +\sqrt{-23}}{2}\Big)\Big)+
%\phi \Big(\Big(2, \frac{1 +\sqrt{-23}}{2}\Big)\Big)=-3.72545. 
%\]

By the equation $(2, -1/2 +\sqrt{-23}/2)\times(3, 1/2 -\sqrt{-23}/2)
=(1/2 -\sqrt{-23}/2)$, 
\[
\phi \Big(\Big(2, \frac{-1 +\sqrt{-23}}{2}\Big)\Big)\times
\phi \Big(\Big(3, \frac{1 -\sqrt{-23}}{2}\Big)\Big)
=\phi\Big(\Big(\frac{1 -\sqrt{-23}}{2}\Big)\Big). 
\]
We get 
\[
\alpha_1\times
\phi \Big(\Big(3, \frac{1 -\sqrt{-23}}{2}\Big)\Big)
=\Big(\frac{1 -\sqrt{-23}}{2}\Big)^2=\frac{-11- \sqrt{-23}}{2}, 
\]
hence, $\phi ((3, 1/2 -\sqrt{-23}/2))= 
(-11- \sqrt{-23})/2\times 1/\alpha_1$. 
Similarly, $\phi ((3, -1/2 -\sqrt{-23}/2))= 
(-11+ \sqrt{-23})/2\times \alpha_1/(\alpha_1^2+4)$.
So, 
\begin{eqnarray*}
\displaystyle a(3)&=&\phi \Big(\Big(3, \frac{1 -\sqrt{-23}}{2}\Big)\Big)+
\phi \Big(\Big(3, \frac{-1 -\sqrt{-23}}{2}\Big)\Big)\\
&=&\frac{-11- \sqrt{-23}}{2}\times\frac{1}{\alpha_1}+
\frac{-11+ \sqrt{-23}}{2}\times \frac{\alpha_1}{\alpha_1^2+4}. 
\end{eqnarray*}
%In this way, if ideal $A$ is nonprincipal then, 
%$A\times(2, -1/2 +\sqrt{-23}/2)$ or $A\times(2, 1/2 +\sqrt{-23}/2)$ 
%is primcipal ideal. 
%So, we can detetmine the coefficients of $\Psi_{K, \Lambda}$. 
We recall $\alpha_1\sim -1.86272+0.728188i$. 
Then, we obtain 
\[\Psi_{K, \Lambda}^{(1)}=q-3.72545 q^2+4.24943 q^3+\cdots.
\] 
Actually, it is possible to continue this calculation, but we 
need the information on the basis of all the ideals, which 
is rather complicated. So, we determine the Hecke eigenforms 
$\Psi_{K, \Lambda}^{(i)}$ by a different method. 
By computer calculation (using ``Sage" \cite{Sage}), 
we know that the space of the modular forms of weight $3$ where 
$\Psi_{K, \Lambda}$ belongs is of dimension $3$. 
We can calculate the basis 
of this modular form explicitly, and the three basis elements are 
of the form: 
\begin{eqnarray*}
&&q + 4q^4 - 11q^6 - 7q^8 + 9q^9 +\cdots , \\
&&q^2 - 5q^4 + 7q^6 + 4q^8 - 8q^9 +\cdots , \\
&&q^3 - 3q^4 + 5q^6 - 7q^9 +\cdots . 
\end{eqnarray*}
On the other hand, 
because of Theorems \ref{thm:ono} and \ref{thm:Miyake}, 
$\Theta_{L_{\oo},P_1}$, 
$\Theta_{L_{\mathfrak{a}_1}, P_1}$ and 
$\Theta_{L_{\mathfrak{a}_2}, P_2}$ are in the same space 
of Hecke eigenforms $\Psi_{K, \Lambda}^{(i)}$. 
Therefore, comparing the first three coefficients of the 
following equation:
\begin{eqnarray*}
\Psi_{K, \Lambda}^{(1)}(q)=\frac{1}{2}\Theta_{L_{\oo},P}(q)+
a2\Theta_{L_{\mathfrak{a}_1},P}(q)+
b\frac{4}{\sqrt{23}}\Theta_{L_{\mathfrak{a}_2},P}(q), 
\end{eqnarray*}
we can find numbers $a$ and $b$ as follows: 
\begin{eqnarray*}
(a, b)=
\left\{
\begin{array}{l}
(A_{1}, B_{2}), \\
(A_{2}, B_{1}), \\
(A_{3}, B_{3}), 
\end{array}
\right.
\end{eqnarray*}
where $A_{1}$, $A_{2}$ and $A_{3}$ are the elements defined by 
\begin{eqnarray*}
\{x\mid 512x^3-96x+7=0\}\hspace{180pt}\\
=\{A_1=-0.465681, A_2=0.0751832, A_2=0.390498\}, 
\end{eqnarray*}
respectively, 
and $B_{1}$, $B_{2}$ and $B_{3}$ are the elements defined by 
\begin{eqnarray*}
\{x\mid 512x^3-2208x+1587=0\}\hspace{150pt}\\
=\{B_1=-2.37065, B_2=0.873067, B_3=1.49759\},\hspace{17pt} 
\end{eqnarray*}
respectively.

In this way, we can calculate the Hecke eigenforms $\Psi_{K, \Lambda}^{(i)}$. 
Namely, \\
{\footnotesize $\Psi_{K, \Lambda}^{(1)}=q-3.72545 q^2+4.24943 q^3+9.87897 q^4-15.831 q^6-21.9018 q^8+9.05761 q^9+41.9799 q^{12}-21.3624 q^{13}+42.0781 q^{16}-33.7437 q^{18}-23 q^{23}-93.07 q^{24}+25 q^{25}+79.5844 q^{26}+0.244826 q^{27}+55.473 q^{29}-33.9378 q^{31}-69.1528 q^{32}+89.4799 q^{36}-90.7777 q^{39}-8.78692 q^{41}+85.6853 q^{46}+42.8975 q^{47}+178.808 q^{48}+49 q^{49}-93.1362 q^{50}+O[q]^{51}$. \\ \\
$\Psi_{K, \Lambda}^{(2)}=q+0.601466 q^2+1.54364 q^3-3.63824 q^4+0.928445 q^6-4.59414 q^8-6.61718 q^9-5.61612 q^{12}+23.5162 q^{13}+11.7897 q^{16}-3.98001 q^{18}-23 q^{23}-7.09168 q^{24}+25 q^{25}+14.1442 q^{26}-24.1073 q^{27}-42.4015 q^{29}-27.9663 q^{31}+25.4677 q^{32}+24.0749 q^{36}+36.3005 q^{39}+74.9986 q^{41}-13.8337 q^{46}-93.8839 q^{47}+18.1991 q^{48}+49 q^{49}+15.0366 q^{50}+O[q]^{51}$. 
\\ \\
$\Psi_{K, \Lambda}^{(3)}=q+3.12398 q^2-5.79306 q^3+5.75927 q^4-18.0974 q^6+5.49593 q^8+24.5596 q^9-33.3638 q^{12}-2.15383 q^{13}-5.86788 q^{16}+76.7237 q^{18}-23 q^{23}-31.8383 q^{24}+25 q^{25}-6.72853 q^{26}-90.1376 q^{27}-13.0715 q^{29}+61.9041 q^{31}-40.3149 q^{32}+141.445 q^{36}+12.4773 q^{39}-66.2117 q^{41}-71.8516 q^{46}+50.9864 q^{47}+33.993 q^{48}+49 q^{49}+78.0996 q^{50}+O[q]^{51}$. \\ \\}
The coefficients $a(m)$ for this case are far from integers. 
In fact they are not elements in a cyclotomic number field in general. 
So, it seems difficult to use the Hecke eigenforms obtained this way 
to apply for the case of the class number $3$ or more in general. 
Some new additional ideas will be needed to treat the case of 
$d=23$ or more generally the cases of class numbers $h\geq 3$. 
We have included the presentation of the results (although they are  
not conclusive) for $d=23$, 
hoping that it might help the reader for the future study on this topic.

\begin{rem}
We remark that the coefficients of $\Psi_{K, \Lambda}^{(i)}$ 
in above calculator results are 
not exact values but approximate values. 
\end{rem}

%We remark that the coefficients $a(m)$ aren't in the cyclotomic fields in
%general. 

%$f(a,b):=\frac{1}{2}\times\Theta_{L_{\oo}, P_{1}}
%+a\times(2\times\Theta_{L_{\mathfrak{a}_{1}}, P_{1}})
%+b\times(\frac{4}{\sqrt {23}}\times\Theta_{L_{\mathfrak{a}_{2}}, P_{2}})$
%とおくと，
%$a$を$512 x^3 - 96 x + 7=0$の根
%$b$を$512 x^3 - 2208 x + 1587 = 0$の根として
%\begin{eqnarray*}
%\Psi_{K, \Lambda}^{(1)}&=f(-0.46568109191263835, 0.8730668786560492)\\
%\Psi_{K, \Lambda}^{(2)}&=f(0.07518319447441187, -2.370652274623658)\\
%\Psi_{K, \Lambda}^{(3)}&=f(0.3904978974382265, 1.4975853959676089)
%\end{eqnarray*}
%と書ける．

%$\Psi_{K, \Lambda}^{(i)}$や$\Theta_{L, P_{i}}$が入っている
%モジュラー形式の空間は$3$次元で，基底が
%\begin{eqnarray*}
%q+\cdots\\
%q^2+\cdots\\
%q^3+\cdots
%\end{eqnarray*}
%と書けます．つまり，はじめの$3$項($q^3$まで)が一致すれば，
%残りも一致することがわかります．

%今，数値計算から$3$項が一致する事を確認したので，
%両者は一致します．

\begin{table}[thb]
\caption{Integral ideals of small norm of $d=23$}
\label{Tab:d=23}
\begin{center}
%{\small
{\footnotesize
%{\scriptsize
\begin{tabular}{c||c} 
\noalign{\hrule height0.8pt}
\hline
$N(A)$ & $A$: ideal  \\ \hline
$1$& $(1)$ \\ \hline
$2$& $(2, -1/2 +\sqrt{-23}/2)$ \\ 
   & $(2, 1/2 +\sqrt{-23}/2)$ \\ \hline
$3$& $(3, 1/2 -\sqrt{-23}/2)$ \\ 
   & $(3, -1/2 -\sqrt{-23}/2)$ \\ \hline
$4$& $(4, 3/2 +\sqrt{-23}/2)$ \\ 
   & $(2)$ \\ 
   & $(4, 5/2 +\sqrt{-23}/2)$ \\ \hline
$5$& $-$ \\ \hline
\noalign{\hrule height0.8pt}
\end{tabular}
\hspace{10pt}
\begin{tabular}{c||c} 
\noalign{\hrule height0.8pt}
\hline
$N(A)$ & $A$: ideal  \\ \hline
$6$& $(1/2 -\sqrt{-23}/2)$ \\ 
   & $(6, 5/2 +\sqrt{-23}/2)$ \\ 
   & $(6, 7/2 +\sqrt{-23}/2)$ \\ 
   & $(1/2 +\sqrt{-23}/2)$ \\ \hline
$7$& $-$ \\ \hline
$8$& $(-3/2 -\sqrt{-23}/2)$ \\ 
   & $(4, -1 +\sqrt{-23})$ \\ 
   & $(4, 1 +\sqrt{-23})$ \\ 
   & $(-3/2 +\sqrt{-23}/2)$ \\ \hline
$9$& $(9, 11/2 +\sqrt{-23}/2)$ \\ 
   & $(3)$ \\ 
   & $(9, 7/2 +\sqrt{-23}/2)$ \\ \hline
$10$& $-$ \\ \hline
\noalign{\hrule height0.8pt}
\end{tabular}
}
\end{center}
\end{table}

\newpage
\section{Concluding Remarks}
\begin{enumerate}
\item 
[(1)]
In this paper, we use the mathematics software ``Sage" \cite{Sage}. 
In particular, The results in Tables \ref{Tab:d=2,d=5} and \ref{Tab:d=23} are 
compute by ``Sage" using the command ``K.ideals\_of\_bdd\_norm()". 
We remark that this command do not always give a $\ZZ$-basis of ideal. 
We must make sure the command ``(ideal).basis()". 

\item 
[(2)]
In Appendix C, we list theta series of lattices 
obtained from $\QQ(\sqrt{-5})$. 
The other cases are listed in one of the auther's website \cite{miezaki}. 

\item 
[(3)]
In the previous paper \cite{Toy-BM}, we studied the spherical designs in 
the nonempty shells of the $\ZZ^{2}$-lattice 
and $A_{2}$-lattice. 
The results state that any shells in the $\ZZ^{2}$-lattice 
(resp.\ $A_{2}$-lattice) are 
spherical $2$-design (resp.\ $4$-design). However, 
the nonempty shells in the $\ZZ^{2}$-lattice 
(resp.\ $A_{2}$-lattice) are not 
spherical $4$-design (resp.\ $6$-design). 
It is interesting to note that no spherical $6$-design among 
the nonempty shells 
of any Euclidean lattice of $2$-dimension is known. 
It is an interesting open problem 
to prove or disprove whether these exists any $6$-design 
which is a shell of a Euclidean lattice of $2$-dimension. 

Responding to the authors' request, Junichi Shigezumi performed computer 
calculations to determine whether there are spherical $t$-designs for bigger $t$, in the $2$-
and $3$-dimensional cases. His calculation shows that among the nonempty shells of 
integral lattices in $2$-dimension (with relatively small discriminant 
and small norms), there are only $4$-designs. 
That is, no $6$-designs were found. (So far, all examples of such $4$-designs 
are the union of vertices of regular $6$-gons, although they are 
the nonempty shells 
of many different lattices). 
In the $3$-dimensional case, 
all examples obtained are only $2$-designs. 
No $4$-designs which are shells 
of a lattice were found. It is an interesting open problem 
whether this is true in general for the dimensions $2$ and $3$. 
Moreover, it is interesting to note that 
no spherical $12$-design among the nonempty shells 
of any Euclidean lattice (of any dimension) is known. 
It is also an interesting open problem 
to prove or disprove whether these exists any $12$-design 
which is a shell of a Euclidean lattice. 

Finally, we state the following conjecture 
for the $2$-dimensional lattices. 
\begin{conj}
Let $L$ be the Euclidean lattice of $2$-dimension, 
whose quadratic form is $ax^2+bxy+cy^2$. 
\begin{enumerate}
\item 
[{\rm (i)}]
 Assume that $b^2-4ac=$$($Integer$)^2$$\times (-3)$. 
Then, all the nonempty shells of $L$ are 
not spherical $6$-designs and 
some of the nonempty shells of $L$ are spherical $4$-designs. Moreover, if all the nonempty shells of $L$ are spherical $4$-designs then\\ 
$b^2-4ac= -3$, that is, $A_{2}$-lattice. 

\item 
[{\rm (ii)}]
 Assume that $b^2-4ac=$$($Integer$)^2$$\times (-4)$. 
Then, all the nonempty shells of $L$ are 
not spherical $4$-designs and 
some of the nonempty shells of $L$ are spherical $2$-designs. Moreover, if all the nonempty shells of $L$ are spherical $2$-designs then\\ 
$b^2-4ac= -4$, that is, $\ZZ^{2}$-lattice. 

\item 
[{\rm (iii)}]
 Otherwise, all the nonempty shells of $L$ are not spherical $2$-designs. 
\end{enumerate}
\end{conj}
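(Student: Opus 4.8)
The statement unifies into a single question about complex-analytic weights. Identify $\RR^{2}$ with $\CC$ via $z=x+iy$, so that $\mathrm{Harm}_{2j}(\RR^{2})$ is spanned by the real and imaginary parts of $z^{2j}$; by Lemma~\ref{lem:lempache} a nonempty shell $L_{m}$ is a spherical $2j$-design for all $j\le s$ precisely when $\sum_{z\in L_{m}}z^{2j}=0$ for $1\le j\le s$, i.e.\ when the Fourier coefficients of the weight-$(2j+1)$ cusp forms $\Theta_{L,z^{2j}}(\tau)=\sum_{z\in L}z^{2j}q^{|z|^{2}}$ vanish at $m$. Writing $D=b^{2}-4ac$ and $K=\QQ(\sqrt{D})$, Gauss' correspondence lets me take $L$ to be a proper fractional ideal of the order $\mathcal{O}_{D}\subset K$ of discriminant $D$, on which $\mathcal{O}_{D}^{\times}$ acts by norm-preserving multiplication. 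If $\zeta\in\mathcal{O}_{D}^{\times}$ is a primitive $2k$-th root of unity, then $z\mapsto\zeta z$ permutes each shell and yields $\sum_{z\in L_{m}}z^{2j}=\zeta^{2j}\sum_{z\in L_{m}}z^{2j}$, so the sum vanishes whenever $\zeta^{2j}\neq1$. Since only $K=\QQ(i)$ ($\#\mathcal{O}_{K}^{\times}=4$) and $K=\QQ(\sqrt{-3})$ ($\#\mathcal{O}_{K}^{\times}=6$) carry roots of unity beyond $\pm1$, this one observation already explains the trichotomy (i)--(iii), and the plan is to make both halves of it---the vanishing (design) half and the nonvanishing half---precise.

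For the design half I would first dispose of the maximal-order models. For $K=\QQ(i)$, $L=\ZZ^{2}=\ZZ[i]$ satisfies $iL=L$, so every shell is $i$-stable and $\sum_{z\in L_{m}}z^{2}=-\sum_{z\in L_{m}}z^{2}=0$: all shells are $2$-designs. For $K=\QQ(\sqrt{-3})$, $L=A_{2}=\ZZ[\zeta_{6}]$, and the sixth root of unity kills the degree-$2$ and degree-$4$ harmonics, so all shells are $4$-designs. Sharpness of both ($\ZZ^{2}$ not always a $4$-design, $A_{2}$ not always a $6$-design) is exactly Theorem~\ref{thm:toy}. For a non-maximal order the unit group collapses to $\{\pm1\}$, but ``some shell is a design'' should follow by exhibiting a norm $m$ whose shell is a full $\langle\zeta\rangle$-orbit even though $L$ itself is not $\zeta$-stable; the smallest instance is already visible for the form $x^{2}+4y^{2}$ (field $\QQ(i)$, discriminant $-16$), whose shell at $m=4$ is $\{\pm2,\pm2i\}$ and hence a $2$-design. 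I expect the existence of such norms to come from those $m$ that are norms of $\OO_{K}$-ideals splitting completely into principal primes, where the ambient symmetry of the maximal order is restored on the shell.

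For the nonvanishing half---case (iii)'s ``no shell is a $2$-design'', and the sharpness and converse parts of (i)--(ii)---I would run the machinery of Sections~3--5 at the critical weight, namely the least $j$ with $\#\mathcal{O}_{K}^{\times}\mid 2j$. Attach to $K$ the weight-$(2j+1)$ Hecke character $\phi$ with $\phi(\alpha\OO_{K})=\alpha^{2j}$ (well defined on principal ideals precisely because $\#\mathcal{O}_{K}^{\times}\mid 2j$); by Theorem~\ref{thm:ono} and Corollary~\ref{cor:HeckeTheta} a suitable combination $\sum_{\text{classes}}c_{i}\Theta_{L_{i},z^{2j}}$ is a normalized Hecke eigenform $\Psi_{K,\Lambda}$, analysed prime by prime via multiplicativity~(\ref{eqn:mul}). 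The clean substitute for Proposition~\ref{prop:Lehmer} is that for a split prime $p=\mathfrak{p}\overline{\mathfrak{p}}\nmid D$ one has $a(p^{\alpha})=\bigl(\phi(\mathfrak{p})^{\alpha+1}-\phi(\overline{\mathfrak{p}})^{\alpha+1}\bigr)/\bigl(\phi(\mathfrak{p})-\phi(\overline{\mathfrak{p}})\bigr)$, so a vanishing $a(p^{\alpha})$ would force $u=\phi(\mathfrak{p})/\phi(\overline{\mathfrak{p}})$, of absolute value $1$, to be a root of unity; writing $\mathfrak{p}^{h}=(\gamma)$ this gives $(\gamma/\overline{\gamma})^{2j}$, hence $\gamma/\overline{\gamma}$, a root of unity of $K$, which for a case-(iii) field means $\gamma/\overline{\gamma}=\pm1$ and therefore $\mathfrak{p}^{h}=\overline{\mathfrak{p}}^{h}$ as ideals---impossible for distinct primes by unique factorization. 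Thus every eigenform coefficient is nonzero at split prime powers, while the remaining shells (ramified $p\mid D$, inert $(d_{K}/p)=-1$) have only two points and are barred from being designs by the Fisher inequality~(\ref{ine:Fisher}), exactly as in the proofs of Theorems~\ref{thm:classnumber=1} and~\ref{thm:classnumber=2}; the converse statements then reduce to the lattice-geometric fact that $\Theta_{L,z^{2}}\equiv0$ (resp.\ $\Theta_{L,z^{4}}\equiv0$) forces $\mathrm{Aut}(L)\supseteq C_{4}$ (resp.\ $C_{6}$), i.e.\ $L$ is the square (resp.\ hexagonal) lattice.

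The step I expect to be the genuine obstacle is isolating a \emph{single} ideal class once the relevant class number is $3$ or more. For $h\le2$ everything works because the norms of the principal and nonprincipal classes are disjoint (Lemma~\ref{lem:coprime}), so at each $m$ only one $\Theta_{L_{i},z^{2j}}$ contributes and the design property of each class is read off from the one multiplicative eigenform, on which the argument above bites. For $h\ge3$ this disjointness fails: as the $d=23$ computation of Section~6 already shows (all three classes represent $m=4,6,\dots$), the eigenforms $\Psi^{(i)}_{K,\Lambda}$ have non-cyclotomic irrational coefficients, and the actual lattice series $\Theta_{L_{\oo},z^{2}}$ is an irrational combination $\sum_{i}c_{i}\Psi^{(i)}_{K,\Lambda}$ that is no longer a Hecke eigenform. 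Hence, even granting that each $\Psi^{(i)}$ is nonzero at every represented norm (which the root-of-unity argument does give), one must still preclude cancellation in $\sum_{i}c_{i}a^{(i)}(p^{\alpha})$ at represented prime powers, and the loss of multiplicativity destroys the reduction to $a(p)$. Controlling these irrational transition coefficients $c_{i}$---presumably by a transcendence or equidistribution input on the CM Hecke characters---is, I believe, the essential new idea the conjecture demands, and it pervades equally the converse clauses of (i)--(ii), since non-maximal orders in $\QQ(i)$ and $\QQ(\sqrt{-3})$ themselves acquire large class numbers; by comparison, passing from $\OO_{K}$ to a general order $\mathcal{O}_{D}$ is routine bookkeeping, replacing $d_{K}$ by $D$ throughout Proposition~\ref{prop:NUM} and Lemma~\ref{lem:Takagi}.
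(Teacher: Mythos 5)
You should first be clear about the status of this statement in the paper: it is not a theorem there but an open conjecture, stated in the concluding Section~7 precisely because the authors could not prove it, and Section~6 (the case $\vert \Cl_{K}\vert =3$, $d=23$) is devoted to explaining why their method does not extend --- the Hecke eigenforms $\Psi_{K,\Lambda}^{(i)}$ acquire coefficients that are not rational integers, indeed not even cyclotomic, so the integrality underlying Proposition~\ref{prop:Lehmer} is lost and individual lattice theta series cease to be eigenforms. There is therefore no paper proof to compare against; the only question is whether your proposal closes the gap the authors left open, and by your own account it does not.

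What your proposal does contain is a correct reconstruction of the part that is already proved. The unit-group observation explains the trichotomy, the split/inert/ramified case analysis with the Fisher bound (\ref{ine:Fisher}) reproduces the proofs of Theorems~\ref{thm:classnumber=1} and~\ref{thm:classnumber=2}, and your closed formula $a(p^{\alpha})=\bigl(\phi(\mathfrak{p})^{\alpha+1}-\phi(\overline{\mathfrak{p}})^{\alpha+1}\bigr)/\bigl(\phi(\mathfrak{p})-\phi(\overline{\mathfrak{p}})\bigr)$ is a clean substitute for the paper's route through the Deligne bound (\ref{eqn:Deligne}) and the sine formula (\ref{eqn:Lehmer}); for CM eigenforms it is arguably the better argument. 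But the three steps that constitute the conjecture proper remain assertions. (a) The claim that every order of discriminant $n^{2}\times(-3)$ or $n^{2}\times(-4)$ has \emph{some} shell forming a full $\zeta_{6}$- or $i$-orbit is supported by a single example ($x^{2}+4y^{2}$, $m=4$), not by an argument valid for all proper ideal classes of all such orders. (b) For class number $h\geq 3$, and equally for non-fundamental discriminants (where ring class characters enter and non-maximal orders of $\QQ(\sqrt{-1})$ and $\QQ(\sqrt{-3})$ themselves have $h>1$), the theta series of one ideal class is an irrational linear combination $\sum_{i}c_{i}\Psi^{(i)}_{K,\Lambda}$; nonvanishing of each $a^{(i)}(p^{\alpha})$ --- which your root-of-unity argument does give --- says nothing about the combination $\sum_{i}c_{i}a^{(i)}(p^{\alpha})$, and multiplicativity (\ref{eqn:mul}) is no longer available to reduce to prime powers. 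You concede this is ``the essential new idea the conjecture demands''; it is exactly the obstruction the paper's Section~6 exhibits. (c) The converse clauses of (i)--(ii) are reduced to the claim that $\Theta_{L,z^{2}}\equiv 0$ forces the automorphisms of $L$ to contain a cyclic group of order $4$ (resp.\ order $6$ using $z^{2},z^{4}$); this is not a known ``lattice-geometric fact'' but is itself equivalent to the rigidity being conjectured. In short, you have written a sound research program that recovers the proven cases and locates the difficulty where it genuinely lies, but not a proof.
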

\end{enumerate}

\bigskip
\noindent
{\bf Acknowledgment.}
The authors thank Masao Koike for informing us that our 
previous results in \cite{Toy-BM} can be interpreted in terms of 
the cusp forms attached to $L$-functions with a Hecke character 
of CM fields, i.e., the imaginary quadratic fields 
$\QQ(\sqrt{-1})$ and $\QQ(\sqrt{-3})$, and in particular for bringing 
our attention to Theorem 1.31 in \cite{Web}. 
The authors also thank Junichi Shigezumi 
for his helpful discussions and computations on this research. 
The second author is supported by JSPS Research Fellowship. 
\begin{appendix}

\section{The case of $\vert \Cl_{K}\vert =1$}
%We list the basis of $L_{\oo}$. 
%\begin{table}[thb]
%\caption{Integral ideal of small norm of $D=2$ and $D=5$}
%\label{Tab:GM2}
%\begin{center}
%%{\small
%{\footnotesize
%%{\scriptsize
%\begin{tabular}{c||lllllllll} 
%\noalign{\hrule height0.8pt}
%\hline
%$-D$  & \MC{1}{c}{$-2$} 
%      & \MC{1}{c}{$-7$} & \MC{1}{c}{$-11$} & \MC{1}{c}{$-19$} 
%      & \MC{1}{c}{$-43$} & \MC{1}{c}{$-67$} & \MC{1}{c}{$-163$}  \\ \hline
%$-D \pmod 4$   & \MC{1}{c}{$2$} 
%               & \MC{1}{c}{$1$} & \MC{1}{c}{$1$} & \MC{1}{c}{$1$} 
%              & \MC{1}{c}{$1$} & \MC{1}{c}{$1$} & \MC{1}{c}{$1$}  \\ \hline
%$d_{K}$   & \MC{1}{c}{$-2^3$} 
%          & \MC{1}{c}{$-7$} & \MC{1}{c}{$-11$} & \MC{1}{c}{$-19$} 
%          & \MC{1}{c}{$-43$} & \MC{1}{c}{$-67$} & \MC{1}{c}{$-163$}  \\ \hline
%$L_{\oo}$ & \MC{1}{c}{$[1, \sqrt{-2}]$} 
%          & \MC{1}{c}{$[1, (1+\sqrt{-7})/2]$} & \MC{1}{c}{$[1, (1+\sqrt{-11})/2%]$} & \MC{1}{c}{$[1, (1+\sqrt{-19})/2]$} 
%          & \MC{1}{c}{$[1, (1+\sqrt{-43})/2]$} & \MC{1}{c}{$[1, (1+\sqrt{-67})/%2]$} & \MC{1}{c}{$[1, (1+\sqrt{-163})/2]$}  \\ \hline
%\noalign{\hrule height0.8pt}
%\end{tabular}
%\hspace{10pt}
%}
%\end{center}
%\end{table}
\begin{table}[thb]
\caption{$\vert \Cl_{K}\vert =1$}
\label{Tab:Cl=1}
\begin{center}
%{\small
{\footnotesize
%{\scriptsize
\begin{tabular}{c|c|c|c} 
\noalign{\hrule height0.8pt}
$-d$ & $-d \pmod 4$ & $d_{K}$ & $L_{\oo}$ \\ \hline 
\hline
$-1$ & $3$ & $-2^2$ & $[1, \sqrt{-1}]$ \\ \hline

$-2$ & $2$ & $-2^3$ & $[1, \sqrt{-2}]$ \\ \hline

$-3$ & $1$ & $-3$ & $[1, (1+\sqrt{-3})/2]$ \\ \hline

$-7$ & $1$ & $-7$ & $[1, (1+\sqrt{-7})/2]$ \\ \hline

$-11$ & $1$ & $-11$ & $[1, (1+\sqrt{-11})/2]$ \\ \hline

$-19$ & $1$ & $-19$ & $[1, (1+\sqrt{-19})/2]$ \\ \hline

$-43$ & $1$ & $-43$ & $[1, (1+\sqrt{-43})/2]$ \\ \hline

$-67$ & $1$ & $-67$ & $[1, (1+\sqrt{-67})/2]$ \\ \hline

$-163$ & $1$ & $-163$ & $[1, (1+\sqrt{-163})/2]$ \\ \hline
\noalign{\hrule height0.8pt}
\end{tabular}
\hspace{10pt}
}
\end{center}
\end{table}
\newpage
\section{The case of $\vert \Cl_{K}\vert =2$}
%We list the basis of $L_{\oo}$ and $L_{\mathfrak{a}}$. 
%\subsection{Generators}
\begin{table}[thb2]
\caption{$\vert \Cl_{K}\vert =2$}
\label{Tab:Cl=2}
\begin{center}
%{\small
{\footnotesize
%{\scriptsize
\begin{tabular}{c|c|c|c|c} 
\noalign{\hrule height0.8pt}
\hline
$-d$ & $-d \pmod 4$ & $d_{K}$ & $L_{\oo}$ & $L_{\mathfrak{a}}$ \\ \hline \hline
$-5$ & $3$ & $-2^2\times 5$ & $[1, \sqrt{-5}]$ 
& $[2, 1+\sqrt{-5}]$  \\ \hline

$-6$ & $2$ & $-2^3\times 3$ & $[1, \sqrt{-6}]$ 
& $[2, \sqrt{-6}]$  \\ \hline

$-10$ & $2$ & $-2^3\times 5$ & $[1, \sqrt{-10}]$ 
& $[2, \sqrt{-10}]$  \\ \hline

$-13$ & $3$ & $-2^2\times 13$ & $[1, \sqrt{-13}]$ 
& $[2, 1+\sqrt{-13}]$  \\ \hline

$-15$ & $1$ & $-3\times 5$ & $[1, (1+\sqrt{-15})/2]$ 
& $[2, (1+\sqrt{-15})/2]$  \\ \hline

$-22$ & $2$ & $-2^3\times 11$ & $[1, \sqrt{-22}]$ 
& $[2, \sqrt{-22}]$  \\ \hline

$-35$ & $1$ & $-5\times 7$ & $[1, (1+\sqrt{-35})/2]$ 
& $[3, (1+\sqrt{-35})/2]$  \\ \hline

$-37$ & $3$ & $-2^2\times 37$ & $[1, \sqrt{-37}]$ 
& $[2, 1+\sqrt{-37}]$  \\ \hline

$-51$ & $1$ & $-3\times 17$ & $[1, (1+\sqrt{-51})/2]$ 
& $[3, (3+\sqrt{-51})/2]$  \\ \hline

$-58$ & $2$ & $-2^3\times 29$ & $[1, \sqrt{-58}]$ 
& $[2, \sqrt{-58}]$  \\ \hline

$-91$ & $1$ & $-7\times 13$ & $[1, (1+\sqrt{-91})/2]$ 
& $[5, (3+\sqrt{-91})/2]$  \\ \hline

$-115$ & $1$ & $-5\times 23$ & $[1, (1+\sqrt{-115})/2]$ 
& $[5, (5+\sqrt{-115})/2]$  \\ \hline

$-123$ & $1$ & $-3\times 41$ & $[1, (1+\sqrt{-123})/2]$ 
& $[3, (3+\sqrt{-123})/2]$  \\ \hline

$-187$ & $1$ & $-11\times 17$ & $[1, (1+\sqrt{-187})/2]$ 
& $[7, (3+\sqrt{-187})/2]$  \\ \hline

$-235$ & $1$ & $-5\times 47$ & $[1, (1+\sqrt{-235})/2]$ 
& $[5, (5+\sqrt{-235})/2]$  \\ \hline

$-267$ & $1$ & $-3\times 89$ & $[1, (1+\sqrt{-267})/2]$ 
& $[3, (3+\sqrt{-267})/2]$  \\ \hline

$-403$ & $1$ & $-13\times 31$ & $[1, (1+\sqrt{-403})/2]$ 
& $[11, (9+\sqrt{-403})/2]$  \\ \hline

$-427$ & $1$ & $-7\times 61$ & $[1, (1+\sqrt{-427})/2]$ 
& $[7, (7+\sqrt{-427})/2]$  \\ \hline
\noalign{\hrule height0.8pt}
\end{tabular}
\hspace{10pt}
}
\end{center}
\end{table}
%\newpage
%\subsection{Quadratic forms}
%\begin{table}[thb3]
%\caption{Integral ideal of small norm of $D=2$ and $D=5$}
%\label{Tab:GM2}
%\begin{center}
%%{\small
%{\footnotesize
%%{\scriptsize
%\begin{tabular}{c|c|c} 
%\hline
%\noalign{\hrule height0.8pt}
%$-D$ & $L_{\oo}$ & $L_{\mathfrak{a}}$ \\ \hline \hline

%$-5$ & $x^2 + 5 y^2$ & $2 x^2 + 2 xy + 3 y^2$ \\ \hline

%$-6$ & $2 x^2 + 3 y^2$ & $x^2 + 6 y^2$ \\ \hline

%$-10$ & $x^2 + 10 y^2$ & $2 x^2 + 5 y^2$ \\ \hline

%$-13$ & $x^2 + 13 y^2$ & $2 x^2 + 2 xy + 7 y^2$ \\ \hline

%$-15$ & $x^2 + xy + 4 y^2$ & $2 x^2 +  xy + 2 y^2$ \\ \hline

%$-22$ & $x^2 + 22 y^2$ & $2 x^2 + 11 y^2$ \\ \hline

%$-35$ & $x^2 + xy + 9 y^2$ & $3 x^2 +  xy + 3 y^2$ \\ \hline

%$-37$ & $x^2 + 37 y^2$ & $2 x^2 + 2 xy + 19 y^2$ \\ \hline

%$-51$ & $x^2 + xy + 13 y^2$ & $3 x^2 - 3 xy + 5 y^2$ \\ \hline

%$-58$ & $x^2 + 58 y^2$ & $2 x^2 + 29 y^2$ \\ \hline

%$-91$ & $x^2 + xy + 23 y^2$ & $5 x^2 + 7 xy + 7 y^2$ \\ \hline

%$-115$ & $x^2 + xy + 29 y^2$ & $5 x^2 + 5 xy + 7 y^2$ \\ \hline

%$-123$ & $x^2 + xy + 31 y^2$ & $3 x^2 + 3 xy + 11 y^2$ \\ \hline

%$-187$ & $x^2 + xy + 47 y^2$ & $7 x^2 + 11 xy + 11 y^2$ \\ \hline

%$-235$ & $x^2 + xy + 59 y^2$ & $5 x^2 + 5 xy + 13 y^2$ \\ \hline

%$-267$ & $x^2 + xy + 67 y^2$ & $3 x^2 + 3 xy + 23 y^2$ \\ \hline

%$-403$ & $x^2 + xy + 101 y^2$ & $11 x^2 + 13 xy + 13 y^2$ \\ \hline

%$-427$ & $x^2 + xy + 107 y^2$ & $7 x^2 + 7 xy + 17 y^2$ \\ \hline
%\noalign{\hrule height0.8pt}
%\end{tabular}
%\hspace{10pt}
%}
%\end{center}
%\end{table}

\newpage
\section{Theta series of $L_{\oo}$ and $L_{\mathfrak{a}}$ of $\QQ(\sqrt{-5})$}

%\caption{Theta series of $L_{\oo}$ and $L_{\mathfrak{a}}$ of $\QQ\sqrt{-5}$}
%\label{Tab:GM2}
\begin{center}
%{\small
{\footnotesize
%{\scriptsize
$\Theta_{L_{\oo}}=1+2 q+2 q^4+2 q^5+4 q^6+6 q^9+4 q^{14}+2 q^{16}+2 q^{20}+8 q^{21}+4 q^{24}+2 q^{25}+4 q^{29}+4 q^{30}+6 q^{36}+4 q^{41}+6 q^{45}+4 q^{46}+6 q^{49}+8 q^{54}+4 q^{56}+4 q^{61}+2 q^{64}+8 q^{69}+4 q^{70}+2 q^{80}+10 q^{81}+8 q^{84}+4 q^{86}+4 q^{89}+4 q^{94}+4 q^{96}+2 q^{100}+4 q^{101}+8 q^{105}+4 q^{109}+4 q^{116}+4 q^{120}+2 q^{121}+2 q^{125}+12 q^{126}+8 q^{129}+4 q^{134}+8 q^{141}+6 q^{144}+4 q^{145}+4 q^{149}+4 q^{150}+8 q^{161}+4 q^{164}+4 q^{166}+2 q^{169}+8 q^{174}+6 q^{180}+4 q^{181}+4 q^{184}+16 q^{189}+6 q^{196}+8 q^{201}+4 q^{205}+4 q^{206}+4 q^{214}+8 q^{216}+4 q^{224}+6 q^{225}+4 q^{229}+4 q^{230}+4 q^{241}+4 q^{244}+6 q^{245}+8 q^{246}+8 q^{249}+4 q^{254}+2 q^{256}+12 q^{261}+4 q^{269}+8 q^{270}+8 q^{276}+4 q^{280}+4 q^{281}+2 q^{289}+12 q^{294}+8 q^{301}+4 q^{305}+8 q^{309}+2 q^{320}+8 q^{321}+10 q^{324}+4 q^{326}+8 q^{329}+4 q^{334}+8 q^{336}+4 q^{344}+8 q^{345}+4 q^{349}+4 q^{350}+4 q^{356}+2 q^{361}+8 q^{366}+12 q^{369}+4 q^{376}+8 q^{381}+4 q^{384}+4 q^{389}+2 q^{400}+4 q^{401}+4 q^{404}+10 q^{405}+8 q^{406}+4 q^{409}+12 q^{414}+8 q^{420}+4 q^{421}+4 q^{430}+4 q^{436}+18 q^{441}+4 q^{445}+4 q^{446}+4 q^{449}+4 q^{454}+4 q^{461}+4 q^{464}+8 q^{469}+4 q^{470}+4 q^{480}+2 q^{484}+12 q^{486}+8 q^{489}+2 q^{500}+O[q]^{501}$\\ 
\vspace{8pt}
$\Theta_{L_{\mathfrak{a}}}=1+2 q^2+4 q^3+4 q^7+2 q^8+2 q^{10}+4 q^{12}+4 q^{15}+6 q^{18}+4 q^{23}+8 q^{27}+4 q^{28}+2 q^{32}+4 q^{35}+2 q^{40}+8 q^{42}+4 q^{43}+4 q^{47}+4 q^{48}+2 q^{50}+4 q^{58}+4 q^{60}+12 q^{63}+4 q^{67}+6 q^{72}+4 q^{75}+4 q^{82}+4 q^{83}+8 q^{87}+6 q^{90}+4 q^{92}+6 q^{98}+4 q^{103}+4 q^{107}+8 q^{108}+4 q^{112}+4 q^{115}+4 q^{122}+8 q^{123}+4 q^{127}+2 q^{128}+8 q^{135}+8 q^{138}+4 q^{140}+12 q^{147}+2 q^{160}+10 q^{162}+4 q^{163}+4 q^{167}+8 q^{168}+4 q^{172}+4 q^{175}+4 q^{178}+8 q^{183}+4 q^{188}+4 q^{192}+2 q^{200}+4 q^{202}+8 q^{203}+12 q^{207}+8 q^{210}+4 q^{215}+4 q^{218}+4 q^{223}+4 q^{227}+4 q^{232}+4 q^{235}+4 q^{240}+2 q^{242}+12 q^{243}+2 q^{250}+12 q^{252}+8 q^{258}+4 q^{263}+8 q^{267}+4 q^{268}+8 q^{282}+4 q^{283}+8 q^{287}+6 q^{288}+4 q^{290}+4 q^{298}+4 q^{300}+8 q^{303}+4 q^{307}+12 q^{315}+8 q^{322}+8 q^{327}+4 q^{328}+4 q^{332}+4 q^{335}+2 q^{338}+8 q^{343}+4 q^{347}+8 q^{348}+6 q^{360}+4 q^{362}+4 q^{363}+4 q^{367}+4 q^{368}+4 q^{375}+16 q^{378}+4 q^{383}+12 q^{387}+6 q^{392}+8 q^{402}+4 q^{410}+4 q^{412}+4 q^{415}+12 q^{423}+8 q^{427}+4 q^{428}+8 q^{432}+8 q^{435}+4 q^{443}+8 q^{447}+4 q^{448}+6 q^{450}+4 q^{458}+4 q^{460}+4 q^{463}+4 q^{467}+4 q^{482}+16 q^{483}+4 q^{487}+4 q^{488}+6 q^{490}+8 q^{492}+8 q^{498}+O[q]^{501}$

}
\end{center}
%\caption{Theta series of $L_{\oo}$ and $L_{\mathfrak{a}}$ of $\QQ\sqrt{-5}$ 
%weight by $P=(x^2-y^2)/2$}
%\label{Tab:GM2}
\begin{center}
%{\small
{\footnotesize
%{\scriptsize
$\Theta_{L_{\oo}, P}=q+4 q^4-5 q^5-8 q^6+7 q^9+8 q^{14}+16 q^{16}-20 q^{20}-16 q^{21}-32 q^{24}+25 q^{25}-22 q^{29}+40 q^{30}+28 q^{36}+62 q^{41}-35 q^{45}-88 q^{46}-33 q^{49}+16 q^{54}+32 q^{56}-58 q^{61}+64 q^{64}+176 q^{69}-40 q^{70}-80 q^{80}-95 q^{81}-64 q^{84}+152 q^{86}-142 q^{89}+8 q^{94}-128 q^{96}+100 q^{100}+122 q^{101}+80 q^{105}+38 q^{109}-88 q^{116}+160 q^{120}+121 q^{121}-125 q^{125}+56 q^{126}-304 q^{129}-232 q^{134}-16 q^{141}+112 q^{144}+110 q^{145}+278 q^{149}-200 q^{150}-176 q^{161}+248 q^{164}+152 q^{166}+169 q^{169}+176 q^{174}-140 q^{180}-358 q^{181}-352 q^{184}+32 q^{189}-132 q^{196}+464 q^{201}-310 q^{205}-88 q^{206}+248 q^{214}+64 q^{216}+128 q^{224}+175 q^{225}-262 q^{229}+440 q^{230}+302 q^{241}-232 q^{244}+165 q^{245}-496 q^{246}-304 q^{249}-472 q^{254}+256 q^{256}-154 q^{261}+38 q^{269}-80 q^{270}+704 q^{276}-160 q^{280}-418 q^{281}+289 q^{289}+264 q^{294}+304 q^{301}+290 q^{305}+176 q^{309}-320 q^{320}-496 q^{321}-380 q^{324}-328 q^{326}+16 q^{329}+488 q^{334}-256 q^{336}+608 q^{344}-880 q^{345}-22 q^{349}+200 q^{350}-568 q^{356}+361 q^{361}+464 q^{366}+434 q^{369}+32 q^{376}+944 q^{381}-512 q^{384}-202 q^{389}+400 q^{400}-478 q^{401}+488 q^{404}+475 q^{405}-176 q^{406}-802 q^{409}-616 q^{414}+320 q^{420}-778 q^{421}-760 q^{430}+152 q^{436}-231 q^{441}+710 q^{445}+872 q^{446}+398 q^{449}-712 q^{454}+842 q^{461}-352 q^{464}-464 q^{469}-40 q^{470}+640 q^{480}+484 q^{484}+616 q^{486}+656 q^{489}-500 q^{500}+O[q]^{501}$\\
\vspace{8pt}
$\Theta_{L_{\mathfrak{a}, P}}=2 q^2-4 q^3+4 q^7+8 q^8-10 q^{10}-16 q^{12}+20 q^{15}+14 q^{18}-44 q^{23}+8 q^{27}+16 q^{28}+32 q^{32}-20 q^{35}-40 q^{40}-32 q^{42}+76 q^{43}+4 q^{47}-64 q^{48}+50 q^{50}-44 q^{58}+80 q^{60}+28 q^{63}-116 q^{67}+56 q^{72}-100 q^{75}+124 q^{82}+76 q^{83}+88 q^{87}-70 q^{90}-176 q^{92}-66 q^{98}-44 q^{103}+124 q^{107}+32 q^{108}+64 q^{112}+220 q^{115}-116 q^{122}-248 q^{123}-236 q^{127}+128 q^{128}-40 q^{135}+352 q^{138}-80 q^{140}+132 q^{147}-160 q^{160}-190 q^{162}-164 q^{163}+244 q^{167}-128 q^{168}+304 q^{172}+100 q^{175}-284 q^{178}+232 q^{183}+16 q^{188}-256 q^{192}+200 q^{200}+244 q^{202}-88 q^{203}-308 q^{207}+160 q^{210}-380 q^{215}+76 q^{218}+436 q^{223}-356 q^{227}-176 q^{232}-20 q^{235}+320 q^{240}+242 q^{242}+308 q^{243}-250 q^{250}+112 q^{252}-608 q^{258}-284 q^{263}+568 q^{267}-464 q^{268}-32 q^{282}+316 q^{283}+248 q^{287}+224 q^{288}+220 q^{290}+556 q^{298}-400 q^{300}-488 q^{303}-596 q^{307}-140 q^{315}-352 q^{322}-152 q^{327}+496 q^{328}+304 q^{332}+580 q^{335}+338 q^{338}-328 q^{343}-116 q^{347}+352 q^{348}-280 q^{360}-716 q^{362}-484 q^{363}+724 q^{367}-704 q^{368}+500 q^{375}+64 q^{378}-44 q^{383}+532 q^{387}-264 q^{392}+928 q^{402}-620 q^{410}-176 q^{412}-380 q^{415}+28 q^{423}-232 q^{427}+496 q^{428}+128 q^{432}-440 q^{435}+796 q^{443}-1112 q^{447}+256 q^{448}+350 q^{450}-524 q^{458}+880 q^{460}-764 q^{463}+124 q^{467}+604 q^{482}+704 q^{483}+484 q^{487}-464 q^{488}+330 q^{490}-992 q^{492}-608 q^{498}+O[q]^{501}$

}
\end{center}

%\caption{$\Psi_{K, \Lambda}^{(1)}(z)$}
%\label{Tab:GM2}
\begin{center}
%{\small
{\footnotesize
%{\scriptsize
$\Psi_{K, \Lambda}^{(1)}(z)=q+2 q^2-4 q^3+4 q^4-5 q^5-8 q^6+4 q^7+8 q^8+7 q^9-10 q^{10}-16 q^{12}+8 q^{14}+20 q^{15}+16 q^{16}+14 q^{18}-20 q^{20}-16 q^{21}-44 q^{23}-32 q^{24}+25 q^{25}+8 q^{27}+16 q^{28}-22 q^{29}+40 q^{30}+32 q^{32}-20 q^{35}+28 q^{36}-40 q^{40}+62 q^{41}-32 q^{42}+76 q^{43}-35 q^{45}-88 q^{46}+4 q^{47}-64 q^{48}-33 q^{49}+50 q^{50}+16 q^{54}+32 q^{56}-44 q^{58}+80 q^{60}-58 q^{61}+28 q^{63}+64 q^{64}-116 q^{67}+176 q^{69}-40 q^{70}+56 q^{72}-100 q^{75}-80 q^{80}-95 q^{81}+124 q^{82}+76 q^{83}-64 q^{84}+152 q^{86}+88 q^{87}-142 q^{89}-70 q^{90}-176 q^{92}+8 q^{94}-128 q^{96}-66 q^{98}+100 q^{100}+122 q^{101}-44 q^{103}+80 q^{105}+124 q^{107}+32 q^{108}+38 q^{109}+64 q^{112}+220 q^{115}-88 q^{116}+160 q^{120}+121 q^{121}-116 q^{122}-248 q^{123}-125 q^{125}+56 q^{126}-236 q^{127}+128 q^{128}-304 q^{129}-232 q^{134}-40 q^{135}+352 q^{138}-80 q^{140}-16 q^{141}+112 q^{144}+110 q^{145}+132 q^{147}+278 q^{149}-200 q^{150}-160 q^{160}-176 q^{161}-190 q^{162}-164 q^{163}+248 q^{164}+152 q^{166}+244 q^{167}-128 q^{168}+169 q^{169}+304 q^{172}+176 q^{174}+100 q^{175}-284 q^{178}-140 q^{180}-358 q^{181}+232 q^{183}-352 q^{184}+16 q^{188}+32 q^{189}-256 q^{192}-132 q^{196}+200 q^{200}+464 q^{201}+244 q^{202}-88 q^{203}-310 q^{205}-88 q^{206}-308 q^{207}+160 q^{210}+248 q^{214}-380 q^{215}+64 q^{216}+76 q^{218}+436 q^{223}+128 q^{224}+175 q^{225}-356 q^{227}-262 q^{229}+440 q^{230}-176 q^{232}-20 q^{235}+320 q^{240}+302 q^{241}+242 q^{242}+308 q^{243}-232 q^{244}+165 q^{245}-496 q^{246}-304 q^{249}-250 q^{250}+112 q^{252}-472 q^{254}+256 q^{256}-608 q^{258}-154 q^{261}-284 q^{263}+568 q^{267}-464 q^{268}+38 q^{269}-80 q^{270}+704 q^{276}-160 q^{280}-418 q^{281}-32 q^{282}+316 q^{283}+248 q^{287}+224 q^{288}+289 q^{289}+220 q^{290}+264 q^{294}+556 q^{298}-400 q^{300}+304 q^{301}-488 q^{303}+290 q^{305}-596 q^{307}+176 q^{309}-140 q^{315}-320 q^{320}-496 q^{321}-352 q^{322}-380 q^{324}-328 q^{326}-152 q^{327}+496 q^{328}+16 q^{329}+304 q^{332}+488 q^{334}+580 q^{335}-256 q^{336}+338 q^{338}-328 q^{343}+608 q^{344}-880 q^{345}-116 q^{347}+352 q^{348}-22 q^{349}+200 q^{350}-568 q^{356}-280 q^{360}+361 q^{361}-716 q^{362}-484 q^{363}+464 q^{366}+724 q^{367}-704 q^{368}+434 q^{369}+500 q^{375}+32 q^{376}+64 q^{378}+944 q^{381}-44 q^{383}-512 q^{384}+532 q^{387}-202 q^{389}-264 q^{392}+400 q^{400}-478 q^{401}+928 q^{402}+488 q^{404}+475 q^{405}-176 q^{406}-802 q^{409}-620 q^{410}-176 q^{412}-616 q^{414}-380 q^{415}+320 q^{420}-778 q^{421}+28 q^{423}-232 q^{427}+496 q^{428}-760 q^{430}+128 q^{432}-440 q^{435}+152 q^{436}-231 q^{441}+796 q^{443}+710 q^{445}+872 q^{446}-1112 q^{447}+256 q^{448}+398 q^{449}+350 q^{450}-712 q^{454}-524 q^{458}+880 q^{460}+842 q^{461}-764 q^{463}-352 q^{464}+124 q^{467}-464 q^{469}-40 q^{470}+640 q^{480}+604 q^{482}+704 q^{483}+484 q^{484}+616 q^{486}+484 q^{487}-464 q^{488}+656 q^{489}+330 q^{490}-992 q^{492}-608 q^{498}-500 q^{500}+O[q]^{501}$

}
\end{center}

\end{appendix}
%\newpage

\end{document}